\newtheorem{theorem}{Theorem}[section]
\newtheorem{lemma}[theorem]{Lemma}
\newtheorem{prop}[theorem]{Proposition}
\newtheorem{coro}[theorem]{Corollary}
\theoremstyle{definition}
\newtheorem{definition}[theorem]{Definition}
\theoremstyle{remark}
\newtheorem{remark}[theorem]{Remark}
\theoremstyle{plain}
\newenvironment{customthm}[1]
  {\innercustomthm}
  {\endinnercustomthm}
\theoremstyle{plain}
\newtheorem{maintheorem}{Theorem}
\newcommand{\Z}{\ensuremath{\mathbb{Z}}}
\newcommand{\nt}{\ensuremath{\mathbb{N}}}
\newcommand{\car}{\operatorname{Card}}
\newcommand{\crit}{\operatorname{Crit}}
\newcommand{\crd}{\operatorname{CrD}}
\begin{document}

\title[]{There are no $\sigma$-finite absolutely continuous invariant measures for multicritical circle maps}

\author{Edson de Faria}
\address{Instituto de Matem\'atica e Estat\'istica, Universidade de S\~ao Paulo}
\curraddr{Rua do Mat\~ao 1010, 05508-090, S\~ao Paulo SP, Brasil}
\email{edson@ime.usp.br}

\author{Pablo Guarino}
\address{Instituto de Matem\'atica e Estat\'istica, Universidade Federal Fluminense}
\curraddr{Rua Prof. Marcos Waldemar de Freitas Reis, S/N, 24.210-201, Bloco H, Campus do Gragoat\'a, Niter\'oi, Rio de Janeiro RJ, Brasil}
\email{pablo\_\,guarino@id.uff.br}

\thanks{This work has been supported by ``Projeto Tem\'atico Din\^amica em Baixas Dimens\~oes'' FAPESP Grant  2016/25053-8, and also by Coordena\c{c}\~ao de Aperfei\c{c}oamento de Pessoal de N\'ivel Superior - Brasil (CAPES) grant 23038.009189/2013-05.}

\subjclass[2010]{Primary 37E10; Secondary 37E20, 37C40.}
\keywords{Critical circle maps, $\sigma$-finite measures, Katznelson's criterion.}

\begin{abstract} It is well-known that every multicritical circle map without periodic orbits admits a unique invariant Borel {\it probability\/} measure which is {\it purely singular\/} with respect to Lebesgue measure. Can such a map leave invariant an {\it infinite, $\sigma$-finite\/} invariant measure which is absolutely continuous with respect to Lebesgue measure? 
In this paper, using an old criterion due to Katznelson, we show that the answer to this question is no. 
\end{abstract}

\maketitle

\vspace{-0.5cm}

\section{Introduction}

In this paper we study certain ergodic-theoretic properties of {\it multicritical circle maps\/} -- orientation-preserving homeomorphisms of the circle that are reasonably smooth and have a finite number of critical points, all of which are non-flat of power-law type. 

It is well-known that a multicritical circle map $f: S^1\to S^1$ without periodic points is minimal and uniquely ergodic. Its unique invariant Borel probability measure turns out to be singular with respect to Lebesgue measure $\lambda$ on $S^1$ (see Section \ref{secprel} for precise references). At least in principle, this fact does not rule out the possibility that $f$ leaves invariant an {\it infinite,  $\sigma$-finite\/} measure which is absolutely continuous with respect to Lebesgue measure. If such a measure $\mu$ exists, and we denote by $\psi=d\mu/d\lambda$ its Radon-Nikodym derivative with respect to Lebesgue, then $\psi$ is a Borel function such that $0<\psi<\infty$ Lebesgue-a.e., and we have
the cocycle identity 
\begin{equation}\label{radon}
  \psi(x) = 
  \psi\circ f(x)\cdot Df(x)\ \ \ \textrm{for Lebesgue a.e.}\ x\in S^1
  \end{equation}                                                                                                                                                                                                                                                                                                                                                                                                                                                                                                                                                                                                                                                                            
                                                                                                    One can ask more generally: When does a minimal $C^1$  homeomorphism of the circle admit an infinite $\sigma$-finite invariant measure which is absolutely continuous with respect to Lebesgue measure? As it turns out, there are indeed examples of $C^\infty$ {\it diffeomorphisms\/} of the circle with this property, as shown by Katznelson in \cite{katz1}. However, as we will prove below, there are no such examples in the realm of multicritical circle maps. Our main theorem can thus be stated as follows.

\begin{maintheorem}\label{main} If $f:S^1\to S^1$ is a $C^3$ multicritical circle map without periodic points, then $f$ admits no $\sigma$-finite invariant measure which is absolutely continuous with respect to Lebesgue measure. 
\end{maintheorem}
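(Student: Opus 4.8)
The plan is to argue by contradiction. Suppose $f$ admits a $\sigma$-finite invariant measure $\mu\ll\lambda$, with Radon--Nikodym derivative $\psi=d\mu/d\lambda$; then $0<\psi<\infty$ $\lambda$-a.e.\ and the cocycle identity \eqref{radon} holds. First I would dispose of the finite case: if $\mu(S^1)<\infty$, normalizing produces an $f$-invariant Borel probability measure absolutely continuous with respect to $\lambda$, which is impossible since $f$ is uniquely ergodic and its unique invariant probability measure is purely singular. Hence $\mu(S^1)=\infty$. I would then record a purely combinatorial consequence: because $f$ is a homeomorphism, each iterate $f^i$ carries every atom of the $n$-th dynamical partition $\mathcal{P}_n$ onto an atom of the same combinatorial type, so $f$-invariance makes all atoms of a given type carry equal $\mu$-mass; combining this with the recursion coming from the refinement $\mathcal{P}_{n+1}\succ\mathcal{P}_n$ and with $\mu(S^1)=\infty$ forces $\mu(I)=\infty$ for \emph{every} atom $I$ of \emph{every} $\mathcal{P}_n$, so that $\mu$ charges arbitrarily short intervals with infinite mass --- already a severe constraint on $\psi$.

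Next I would invoke Katznelson's criterion. Taking logarithms in \eqref{radon} and iterating along the sequence $(q_n)$ of return times exhibits $\log Df^{q_n}$, for each $n$, as a measurable coboundary for $g:=\log\psi$: one has $\log Df^{q_n}=g-g\circ f^{q_n}$ with $g$ finite $\lambda$-a.e. Since $f^{q_n}\to\Id$ uniformly, Katznelson's criterion translates the existence of such a density into a quantitative demand on the derivative cocycle along the dynamical partitions --- in essence, that the nonlinearity of $f^{q_n}$ must become asymptotically negligible, at every scale, as $n\to\infty$. The goal is to contradict this demand.

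Finally I would verify the contradiction using the real \emph{a priori} bounds for multicritical circle maps. Fix a critical point $c$ of criticality $d>1$ and let $\Delta_n\in\mathcal{P}_n$ be an atom with $c$ as an endpoint. Along the relevant orbit of $\Delta_n$ the critical points of $f$ are encountered only at endpoints of atoms, and $c$ itself sits at the boundary of $\Delta_n$; so a Koebe-type argument together with the real bounds (comparability of adjacent atoms, bounded geometry) shows that, on the portion of $\Delta_n$ adjacent to $c$, $Df^{q_n}$ behaves --- up to uniformly bounded distortion --- like the derivative of the power map $t\mapsto t^d$ onto an adjacent atom. Consequently there are constants $\theta\in(0,1)$ and $0<\delta<1$, both independent of $n$, and a subinterval $\Delta_n'\subset\Delta_n$ with $|\Delta_n'|\ge\theta\,|\Delta_n|$ on which $Df^{q_n}\le\delta$: the nonlinearity of $f^{q_n}$ near $c$ does \emph{not} die out. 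Pushing this through the cocycle identity --- using that $\mu$ assigns the same (infinite) mass to every atom of $\mathcal{P}_n$ and tracking the $q_n$ iterates of $\Delta_n$ around the circle --- I expect to contradict the asymptotic triviality required by Katznelson's criterion, which finishes the argument.

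The hard part will be the last step. The highly distorted regions are the Lebesgue-small neighbourhoods of the finitely many critical points, so the persistent nonlinearity above lives on sets of vanishing Lebesgue measure; the delicate point is to propagate and aggregate it correctly over the $q_n$ iterates of $\Delta_n$, controlling simultaneously the Lebesgue geometry (via the real bounds) and the $\mu$-geometry (which is infinite on every atom), so as to produce a genuine contradiction with Katznelson's criterion. It is this bookkeeping, rather than any isolated estimate, that I expect to be the main obstacle.
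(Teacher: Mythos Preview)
Your outline captures the right spirit --- contradiction via the cocycle identity, a Katznelson-type criterion, and the real bounds forcing persistent nonlinearity near a critical point --- but it has genuine gaps that the paper's proof addresses in ways you do not.

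First, your formulation of ``Katznelson's criterion'' is too vague to carry the argument. You describe it as requiring ``the nonlinearity of $f^{q_n}$ to become asymptotically negligible, at every scale''; but what is actually needed (and what the paper proves as its Theorem~\ref{katzcrit}) is a concrete geometric statement: in \emph{every} atom $\Delta$ of $\mathcal{P}_{n_k}$, for infinitely many levels $n_k$, one must exhibit two disjoint subintervals $J',J''$ of lengths comparable to $|\Delta|$, with $|J'|\ge b_0|J''|$ for some fixed $b_0>1$, and an iterate of $f$ mapping $J'$ diffeomorphically onto $J''$ with uniformly bounded distortion. The contradiction then comes from the Lebesgue density theorem applied to a level set $\{c\le\psi\le c(1+\delta)\}$ of the Radon--Nikodym derivative, not from any infinite-mass bookkeeping. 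Your ``equal mass on atoms of the same type, hence infinite mass on every atom'' observation is correct but plays no role; the paper's mechanism is purely Lebesgue-geometric.

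Second, and more seriously, you only produce the persistent contraction $Df^{q_n}\le\delta$ on a definite-fraction subinterval of the \emph{one} atom $\Delta_n$ adjacent to the critical point $c$. You then acknowledge that ``the persistent nonlinearity above lives on sets of vanishing Lebesgue measure'' and hope to ``propagate and aggregate it'' over the $q_n$ iterates. This is exactly where the real work lies, and your proposal contains no mechanism for doing it. The paper handles this by verifying the Standing Hypothesis in \emph{every} atom of $\mathcal{P}_n$, and this requires two genuinely different arguments depending on the rotation number: when $\limsup a_n$ is large one uses Yoccoz's lemma on almost parabolic maps to find the pair $J',J''$ inside the long parabolic cascade within each atom, then spreads this by bounded distortion (via Koebe and the Cross-Ratio Inequality) to all other atoms; when the rotation number is of bounded type one instead goes several levels deeper into the partition around a preimage $c_{-i}$ of the critical point lying in each atom, using a negative Schwarzian property of the return maps (established in the appendix) to control distortion. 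Neither of these ingredients --- Yoccoz's inequality for the unbounded case, the negative-Schwarzian-at-every-basepoint property for the bounded case --- appears in your outline, and without them the ``bookkeeping'' you allude to cannot be carried out.

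Finally, a minor circularity: you dispose of the finite case by invoking the singularity of the unique invariant probability measure, but in this paper that fact (Theorem~\ref{singular}) is \emph{derived} from Theorem~\ref{main}, not assumed. One can cite Khanin's independent proof, but it is cleaner to note that the Katznelson criterion rules out finite and infinite $\sigma$-finite a.c.i.m.'s simultaneously.
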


The proof of this result (to be given in Section \ref{secproofmain}) will comprise two separate arguments. The first argument will prove the statement for {\it almost all\/} irrational rotation numbers only: a certain subset of the set of rotation numbers of bounded type will be excluded. The second argument will prove the statement for all bounded type rotation numbers. In both cases, the \emph{Schwarzian derivative} of $f$ is used in a fundamental way, which is why we restrict our attention to $C^3$ dynamics. However, it is quite possible that the statement of Theorem \ref{main} holds true under less regularity (perhaps $C^{2+\alpha}$ smoothness is enough).

\subsection*{Brief summary} Here is how the paper is organized.
In the preliminary Section \ref{secprel}, we present the basic facts about multicritical circle maps and recall the fundamental tools: the real bounds, the cross-ratio inequality, Koebe's distortion principle, Yoccoz's inequality. In  Section \ref{seckatz}, we establish a criterion for \emph{non-existence} of $\sigma$-finite absolutely continuous invariant measures. Since this is a slight generalization of \cite[Th.~1]{katz1}, we call it the \emph{Katznelson criterion}. In Section \ref{secproofmain}, we use Katznelson's criterion to prove two particular versions of Theorem \ref{main}, namely Theorem \ref{nosigcrit} and Theorem \ref{nosigbound}. The former deals with all unbounded type rotation numbers and most bounded type ones, and its proof uses Yoccoz's inequality. The latter deals exclusively with bounded type rotation numbers, and its proof depends on a negative Schwarzian property of first return maps whose proof is given in  Appendix~\ref{app}. Combining Theorems \ref{nosigcrit} and \ref{nosigbound}, we immediately deduce Theorem \ref{main}. 

\section{Preliminaries}\label{secprel}

The non-wandering set $\Omega(f)$ of a circle homeomorphism $f$ without periodic points can be either the whole circle -- in which case we say that $f$ is \emph{minimal} -- or else a Cantor set. In the latter case, we say that $f$ is a \emph{Denjoy counterexample}, or that $\Omega(f)$ is an \emph{exceptional minimal set}. In both cases, the rotation number of $f$ is necessarily irrational. 

In his classical article \cite{denjoy}, Denjoy
constructed circle diffeomorphisms (of class $C^{1+\alpha}$ for some $\alpha>0$) having an arbitrary irrational rotation number and possessing an exceptional minimal set. For any such diffeomorphism $f$, even when its minimal set has zero Lebesgue measure, it is easy to construct an $f$-invariant $\sigma$-finite measure which is absolutely continuous with respect to Lebesgue. Indeed, it is enough to consider Lebesgue measure on any interval $I$ in the complement of $\Omega(f)$, and then spread this measure by $f$ to the whole orbit of $I$, namely $\big\{f^n(I)\big\}_{n\in\Z}$. This produces an $f$-invariant $\sigma$-finite measure (definitely \emph{not} finite), which is absolutely continuous since $f$, being smooth, preserves sets of zero Lebesgue measure. 

One might be tempted to think that such $\sigma$-finite, absolutely continuous invariant measures can only be constructed when the diffeomorphism $f$ has a wandering interval (such as $I$ above), but in \cite{katz1} Katznelson constructed  \emph{minimal} $C^{\infty}$ diffeomorphisms (with very special rotation numbers) which \emph{do} admit such invariant measures. 

In the context of circle maps with \emph{critical points}, we recall that Hall was able to construct in \cite{hall} (see also \cite{liviana}) $C^{\infty}$ circle homeomorphisms which are Denjoy counterexamples. Hence the same construction explained above can be performed here in order to produce invariant measures which are $\sigma$-finite and absolutely continuous with respect to Lebesgue. We remark that the critical points of maps studied in both \cite{hall} and \cite{liviana} satisfy some \emph{flatness} condition. 

The main result of our paper, namely Theorem \ref{main}, states that there are no such examples amongst smooth circle homeomorphisms whose critical points satisfy the following non-flatness condition.

\begin{definition}\label{defmccm} A critical point $c$ of a one-dimensional $C^3$ map $f$ is said to be \emph{non-flat} of criticality $d>1$ if there exists a neighbourhood $W$ of $c$ such that $f(x)=f(c)+\phi(x)\big|\phi(x)\big|^{d-1}$ for all $x \in W$, where $\phi : W \rightarrow \phi(W)$ is a $C^{3}$ diffeomorphism satisfying $\phi(c)=0$. A \emph{multicritical circle map} is an orientation preserving $C^3$ circle homeomorphism $f$ having 
$N \geq 1$ critical points, all of which are non-flat.
\end{definition}

Being a homeomorphism, a multicritical circle map $f$ has a well defined rotation number. We will focus on the case when this number is irrational, which is equivalent to saying that $f$ has no periodic orbits. In particular, $f$ is \emph{uniquely ergodic}: it preserves a unique Borel probability measure $\mu$. Furthermore, we have the following fundamental result due to J.-C. Yoccoz \cite{yoccoz1}.

\begin{theorem}\label{yoccoztheorem} Let $f$ be a multicritical circle map with irrational rotation number $\rho$. Then $f$ is topologically conjugate to the rigid rotation $R_{\rho}$, i.e., there exists a homeomorphism $h: S^{1} \rightarrow S^{1}$ such that $h \circ f = R_{\rho} \circ h.$
\end{theorem}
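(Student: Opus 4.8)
The plan is to deduce the theorem from the \emph{absence of wandering intervals}; the passage from there to the conjugacy is classical Poincar\'e theory. Recall from the beginning of Section~\ref{secprel} that a circle homeomorphism without periodic points is either minimal or has a Cantor non-wandering set $\Omega(f)$, and in the latter case the bounded complementary intervals of $\Omega(f)$ are wandering intervals, i.e. nondegenerate intervals $J$ with $f^n(J)\cap f^m(J)=\emptyset$ whenever $n\neq m$. On the other hand, when $f$ is minimal the map $h(x)=\mu\big([x_0,x]\big)$, where $\mu$ is the unique invariant probability and $x_0$ a basepoint, is a homeomorphism conjugating $f$ to $R_\rho$. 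Hence it suffices to prove that a multicritical circle map with irrational rotation number has no wandering interval.

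Assume, for a contradiction, that $J$ is a wandering interval; enlarging it we may take it to be maximal, so that the endpoints of every $f^n(J)$ lie in $\Omega(f)$. Pairwise disjointness gives $\sum_{n\in\Z}\lambda\big(f^n(J)\big)\le 1$, and in particular $\lambda\big(f^n(J)\big)\to 0$ as $|n|\to\infty$. Let $(q_n)$ be the denominators of the convergents of $\rho$ and $\mathcal{P}_n$ the associated dynamical partitions. The real bounds for multicritical circle maps — which follow from the cross-ratio inequality and do not presuppose any conjugacy — tell us that consecutive atoms of $\mathcal{P}_n$ have comparable lengths, uniformly in $n$. Using this, one attaches to $J$, for each large $n$, an interval $T_n\supset J$ built from boundedly many atoms of $\mathcal{P}_n$ on which the appropriate branch of $f^{q_n}$ (or of $f^{-q_n}$) is a diffeomorphism and which contains a definite amount of ``room'' around $J$ relative to $\lambda(T_n)$.

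The heart of the matter is to bound the distortion of that branch of $f^{q_n}$ on $T_n$. The orbit segment $T_n,f(T_n),\dots,f^{q_n-1}(T_n)$ covers $S^1$ with bounded multiplicity, so it has bounded total length, and the cross-ratio inequality therefore bounds the cross-ratio distortion of the composition $f^{q_n}=f\circ\cdots\circ f$ along this segment. Crucially, this bound survives the passes near the critical points: non-flatness of criticality $d$ means that near such a point $f$ coincides, up to $C^3$ coordinate changes, with the power map $t\mapsto t|t|^{d-1}$, which has negative Schwarzian and hence does not expand cross-ratios. Feeding the resulting control on the cross-ratio distortion, together with the room around $J$ furnished by the real bounds, into Koebe's distortion principle yields a bound on the distortion of $f^{q_n}$ on $T_n$ that is \emph{uniform in $n$}. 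A Denjoy-type argument then produces the contradiction: uniformly bounded distortion forces $\lambda\big(f^{q_n}(J)\big)$ to remain comparable to $\lambda(J)$ — via $\lambda(f^{q_n}(J))/\lambda(f^{q_n}(T_n))\asymp \lambda(J)/\lambda(T_n)$ together with two-sided bounds on $\lambda(T_n)$ and $\lambda(f^{q_n}(T_n))$ coming again from the real bounds — which is incompatible with $\lambda\big(f^{q_n}(J)\big)\to 0$ since $\lambda(J)>0$.

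The main obstacle is exactly the distortion estimate of the previous paragraph: the naive Denjoy bounded-distortion estimate collapses as soon as the orbit of $T_n$ meets a neighbourhood of a critical point, and it is only the non-flatness hypothesis — exploited through the cross-ratio inequality — that rescues the argument, which is also why $C^3$ regularity (enough to make sense of, and control, the relevant Schwarzian-type quantities) is imposed. A secondary, more technical point specific to the multicritical case is that the dynamical partitions and the intervals $T_n$ must be set up from the orbits of all $N$ critical points simultaneously, so that no atom serving as the domain of a branch of $f^{q_n}$ contains a critical point in its interior; this only inflates the combinatorial constants by a factor depending on $N$.
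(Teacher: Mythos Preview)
The paper does not give its own proof of Theorem~\ref{yoccoztheorem}: it is quoted as a classical result due to Yoccoz \cite{yoccoz1} and is used as a black box thereafter (the tools of Section~\ref{sectools}, including the Cross-Ratio Inequality, are reviewed partly because they underlie Yoccoz's argument, but no proof is written out here). So there is no in-paper proof to compare against.

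Your sketch is broadly in the spirit of the Yoccoz--\'Swi\c{a}tek approach: reduce to ruling out wandering intervals, and exploit the fact that non-flat critical points do not expand cross-ratios to push through a Denjoy-type contradiction. Two places deserve tightening, however. First, you assert that $f^{q_n}|_{T_n}$ is a diffeomorphism and then invoke Koebe's distortion principle (Lemma~\ref{koebe}); but the orbit of $T_n$ will in general meet critical points, so $f^{q_n}|_{T_n}$ need not be a diffeomorphism, and Koebe as stated does not apply. The actual mechanism is the Cross-Ratio Inequality itself (which, as you correctly note, survives passages through non-flat critical points): one controls the cross-ratio $[f^{q_n}(J),f^{q_n}(T_n)]$ directly and reads off the size comparison from that, rather than routing through a pointwise distortion bound. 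Second, the real bounds quoted here (Theorem~\ref{realbounds}) are stated only for partitions anchored at critical points; your use of them to build $T_n$ around a putative wandering interval $J$ needs the comparability to be transported to the relevant location, which is again a cross-ratio argument rather than something one gets for free. None of this changes the strategy, but as written the appeal to Koebe is a gap that should be replaced by a direct cross-ratio estimate.
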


Therefore, the unique $f$-invariant probability measure $\mu$ is just the push-forward of the Lebesgue measure under $h^{-1}$, that is, $\mu(A)=\lambda\big(h(A)\big)$ for any Borel set $A$, where $\lambda$ denotes the normalized Lebesgue measure in the unit circle (recall that the conjugacy $h$ is unique up to post-composition with rotations, so the measure $\mu$ is well-defined). In other words, the following diagram commutes.
$$
\begin{CD}
(S^1,\mu)@>{f}>>(S^1,\mu)\\
@V{h}VV             @VV{h}V\\
{(S^1,\lambda)}@>>{R_{\rho}}>{(S^1,\lambda)}
\end{CD}
$$
Note, in particular, that $\mu$ has no atoms and gives positive measure to any non-empty open set. However, as already mentioned in the introduction, $\mu$ is never absolutely continuous with respect to Lebesgue. More precisely:

\begin{theorem}\label{singular} Let $f$ be a multicritical circle map with irrational rotation number. Then its unique invariant probability measure is purely singular with respect to Lebesgue measure.
\end{theorem}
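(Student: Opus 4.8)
The plan is to argue by contradiction and to reduce the statement to a purely metric property of the dynamical partitions. First, since $f$ is uniquely ergodic and since both $f$ and $f^{-1}$ have Luzin's property~$N$ (being an orientation preserving $C^{1}$ circle homeomorphism with $Df>0$ off a finite set, $f$ enjoys property~$N$; and since $f$ sends sets of positive Lebesgue measure to sets of positive Lebesgue measure, so does its inverse), the absolutely continuous and the purely singular parts of the Lebesgue decomposition $\mu=\mu_{\mathrm{ac}}+\mu_{\mathrm{s}}$ are each $f$-invariant, so by uniqueness of the invariant probability measure one of them must vanish. Hence it suffices to rule out $\mu\ll\lambda$. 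Assume then $\mu\ll\lambda$, write $\psi=d\mu/d\lambda\in L^{1}(\lambda)$ with $\int_{S^{1}}\psi\,d\lambda=1$; we will reach a contradiction by showing $\psi=0$ $\lambda$-a.e.

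Fix a critical point $c$ and normalize the conjugacy $h$ of Theorem~\ref{yoccoztheorem} so that $h(c)=0$. For $n\ge 0$ let $\mathcal{P}_{n}$ be the dynamical partition of level $n$ determined by the orbit of $c$, with atoms $I_{n}^{j}=[f^{j}(c),f^{j+q_{n}}(c)]$, $0\le j<q_{n+1}$, and $I_{n+1}^{j}=[f^{j}(c),f^{j+q_{n+1}}(c)]$, $0\le j<q_{n}$, where $(q_{n})$ are the denominators of the convergents of $\rho$. Two facts will be used, both available from the material of Section~\ref{secprel}. (i) Since $h$ conjugates $f$ to $R_{\rho}$ and $h(c)=0$, it carries $\mathcal{P}_{n}$ onto the corresponding dynamical partition of the rigid rotation; in particular every atom of $\mathcal{P}_{n}$ of a given generation has the same $\mu$-mass, namely $\mu(I_{n}^{j})=\|q_{n}\rho\|$ and $\mu(I_{n+1}^{j})=\|q_{n+1}\rho\|$. (ii) By the real bounds, any two adjacent atoms of $\mathcal{P}_{n}$ are comparable, $\sum_{j}\lambda(I_{n}^{j})\asymp 1$, and the mesh of $\mathcal{P}_{n}$ tends to $0$; hence $\{\mathcal{P}_{n}\}$ is a density basis for Lebesgue measure, so for $\lambda$-a.e.\ $x$ one has $\psi(x)=\lim_{n}\mu(I_{n}(x))/\lambda(I_{n}(x))$, where $I_{n}(x)\in\mathcal{P}_{n}$ is the atom containing $x$. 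Combining (i) and (ii), and using $\mu(I_{n}(x))\le\|q_{n}\rho\|$, it suffices to prove the metric statement
\[
\limsup_{n\to\infty}\ \frac{\lambda(I_{n}(x))}{\|q_{n}\rho\|}=\infty\qquad\text{for $\lambda$-a.e.\ }x,
\]
i.e., that at infinitely many scales the $\lambda$-length of the atom through a $\lambda$-typical point dwarfs the common $\mu$-mass of the atoms of that generation. Writing $L_{n}^{j}=\log\bigl(\lambda(I_{n}^{j})/\|q_{n}\rho\|\bigr)$ and recalling that $\sum_{j}\lambda(I_{n}^{j})\asymp 1$ while there are $q_{n+1}\asymp\|q_{n}\rho\|^{-1}$ atoms, this amounts to showing that the $\lambda$-mass of the circle concentrates, as $n\to\infty$, on the atoms with $L_{n}^{j}\to+\infty$.

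This last step is the substance of the theorem and is where the non-flatness of the critical points enters, via the cross-ratio inequality and Koebe's distortion principle. The atoms $I_{n}^{j}=f^{j}(I_{n}^{0})$ arise by iterating $f$ along an orbit that passes the critical point (at $j=0$) and recurs arbitrarily close to the critical set; each such passage applies a definite multiplicative contraction — quantified by the criticality $d>1$ — to the atom in question, and these contractions accumulate, so that $\operatorname{var}_{j}(L_{n}^{j})\to\infty$ while, by the normalization $\sum_{j}\lambda(I_{n}^{j})\asymp 1$, the length-biased (that is, the $\lambda$-) distribution of $L_{n}^{j}$ is pushed to $+\infty$; a Borel--Cantelli argument along $n$ then yields the displayed $\limsup$, hence $\psi=0$ $\lambda$-a.e., the desired contradiction. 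Making the spreading of the scales quantitative, uniformly in $n$, is the main obstacle: one must decompose a long orbit segment at its visits to a fixed neighbourhood of the critical set, absorb those visits through non-flatness, and apply Koebe elsewhere — exactly what the real bounds and the cross-ratio inequality are designed for — after which the remainder of the argument is routine. (Note that the Lyapunov exponent $\int\log Df\,d\mu$ vanishes, since $Df^{q_{n}}$ stays bounded above and below along the closest returns by the real bounds; thus the singularity of $\mu$ is invisible to Birkhoff averages and genuinely lives at the level of the fine scaling geometry. The statement is, in any case, classical; see the references cited in Section~\ref{secprel}.)
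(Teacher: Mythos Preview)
Your opening reduction coincides with the paper's: decompose $\mu=\mu_{\mathrm{ac}}+\mu_{\mathrm{s}}$, observe that both pieces are $f$-invariant because $f$ and $f^{-1}$ preserve Lebesgue-null sets, and invoke unique ergodicity to conclude that $\mu$ is either purely absolutely continuous or purely singular. At that point the paper finishes in one line by citing its main result, Theorem~\ref{main}: since $f$ admits \emph{no} $\sigma$-finite absolutely continuous invariant measure whatsoever, $\mu$ cannot be absolutely continuous. The entire machinery of the paper---the Katznelson criterion (Theorem~\ref{katzcrit}), Yoccoz's lemma, the negative-Schwarzian property, Theorems~\ref{nosigcrit} and~\ref{nosigbound}---is what sits behind that citation.

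After the reduction you take a genuinely different route: rather than invoking Theorem~\ref{main}, you try to show directly that $\psi=d\mu/d\lambda$ vanishes a.e.\ by proving $\limsup_n \lambda(\Delta_n(x))/\|q_n\rho\|=\infty$ for $\lambda$-a.e.\ $x$, where $\Delta_n(x)\in\mathcal{P}_n(c)$ is the atom through $x$. The martingale/density-basis reduction is fine, but your final paragraph is not a proof. You assert that critical passages force $\operatorname{var}_j L_n^j\to\infty$, that the $\lambda$-biased distribution of $L_n^j$ therefore drifts to $+\infty$, and that a Borel--Cantelli argument closes the loop---yet none of these implications is established, and you yourself concede that ``making the spreading of the scales quantitative, uniformly in $n$, is the main obstacle''. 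That obstacle \emph{is} the theorem. Diverging variance does not by itself yield $\limsup L_n(x)=+\infty$ almost everywhere: the divergence could be carried at each level by a $\lambda$-small family of atoms, and without some quantitative link between successive levels (independence, summability, a supermartingale bound) there is no Borel--Cantelli to invoke. What is actually required is a mechanism producing, inside \emph{every} atom at \emph{every} deep level, two subintervals comparable to the atom, one a bounded-distortion iterate of the other with a definite length ratio---precisely what the paper extracts via Yoccoz's inequality and Lemma~\ref{cruciallemma} and then feeds into the Katznelson criterion. Your sketch does not supply any substitute for this step, so as written the argument has a genuine gap.
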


This theorem was proved by Khanin in the late eighties, by means of a certain thermodynamic formalism \cite[Theorem 4]{khanin91} (see also \cite[Proposition 1]{GS93}). We would like to point out that Theorem \ref{singular} is a straightforward consequence of our main result, namely Theorem \ref{main}, as it follows from the simple observation that either $\mu$ is absolutely continuous with respect to Lebesgue, or else it is singular. Otherwise we would have a decomposition $\mu=\nu_1+\nu_2$, where $\nu_1$ is absolutely continuous, $\nu_2$ is singular and both are non-zero. Since $f$ preserves sets of zero Lebesgue measure, both $\nu_1$ and $\nu_2$ would be $f$-invariant, contradicting the unique ergodicity of $f$. Since by Theorem \ref{main} $f$ admits no invariant measure which is absolutely continuous (neither finite nor $\sigma$-finite), Theorem \ref{singular} follows. For more on the ergodic theory of multicritical circle maps, see \cite{dFG2016}.

\subsection{The real bounds} As it is well known, any irrational number $\rho\in (0,1)$ has an infinite \emph{continued fraction expansion}, say  
\begin{equation*}
      \rho(f)= [a_{0} , a_{1} , \cdots ]=
      \cfrac{1}{a_{0}+\cfrac{1}{a_{1}+\cfrac{1}{ \ddots} }} \ .
\end{equation*}
The coefficients $a_n$ are called the \emph{partial quotients} of $\rho$.
Truncating this expansion at level $n-1$, we obtain a sequence of irreducible fractions $p_n/q_n=[a_0,a_1, \cdots ,a_{n-1}]$, which are called the \emph{convergents} of the irrational $\rho$.
The sequence of denominators $q_n$, which we call the \emph{return times}, satisfies
\begin{equation*}
 q_{0}=1, \hspace{0.4cm} q_{1}=a_{0}, \hspace{0.4cm} q_{n+1}=a_{n}\,q_{n}+q_{n-1} \hspace{0.3cm} \text{for $n \geq 1$} .
\end{equation*}

Now let $f$ be a circle homeomorphism with rotation number $\rho(f)=\rho$. 
For any given $x \in S^1$ we construct a nested sequence of partitions of the circle $\big\{\mathcal{P}_n(x)\big\}_{n\in\nt}$ as follows: for each non-negative integer $n$, let $I_{n}(x)$ be the interval with endpoints $x$ and $f^{q_n}(x)$ containing  $f^{q_{n+2}}(x)$, namely,
$I_n(x)=\big[x,f^{q_n}(x)\big]$ and $I_{n+1}(x)=\big[f^{q_{n+1}}(x),x\big]$. We write $I_{n}^{j}(x)=f^{j}\big(I_{n}(x)\big)$ for all $j$ and $n$. It is well known that, for each $n\geq 0$, the collection of intervals$$\mathcal{P}_n(x)\;=\; \big\{I_n^i:\;0\leq i\leq q_{n+1}-1\big\}\cup\big\{I_{n+1}^j:\;0\leq j\leq q_{n}-1\big\}$$is a \emph{partition of the circle modulo endpoints} (see for instance \cite[Lemma 2.4]{EdFG18}), called the {\it $n$-th dynamical partition\/} associated to $x$. The intervals of the form $I_n^i$ are called \emph{long}, whereas those of the form $I_{n+1}^j$ are called \emph{short}. The following fundamental result was obtained by Herman and \'Swi\c{a}tek in the late eighties \cite{H,swiatek}.

\begin{theorem}[Real bounds]\label{realbounds} Given $N\geq 1$ in $\nt$ and $d>1$ there exists a universal constant $C=C(N,d)>1$ with the following property: for any given multicritical circle map $f$ with irrational rotation number, and with at most $N$ critical points whose criticalities are bounded by $d$, there exists $n_0=n_0(f)\in\nt$ such that for each critical point $c$ of $f$, for all $n \geq n_0$, and for every pair $I,J$ of adjacent atoms of $\mathcal{P}_n(c)$ we have:$$C^{-1}\,|I| \leq |J| \leq C\,|I|\,,$$where $|I|$ denotes the Euclidean length of an interval $I$.
\end{theorem}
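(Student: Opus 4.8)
The statement is the classical ``real \emph{a priori} bounds'' for critical circle maps, and the plan is to derive it from the two analytic tools recalled in this section -- the cross-ratio inequality and Koebe's distortion principle -- together with the power-law behaviour of $f$ at its critical points. Since $f$ is topologically conjugate to $R_\rho$ by Theorem~\ref{yoccoztheorem}, $f$ is minimal, so $f^{q_n}(c)\to c$ for every critical point $c$ and the lengths of the atoms of $\mathcal{P}_n(c)$ tend uniformly to $0$ as $n\to\infty$; fix $n$ large enough that each atom of $\mathcal{P}_n(c)$, together with its two neighbours, lies in an arc meeting at most one critical point. The heart of the matter is to bound, for each critical point $c$, the ratio $|I_n(c)|/|I_{n+1}(c)|$ from above and below by a constant depending only on $(N,d)$; comparability of \emph{all} adjacent pairs of atoms of $\mathcal{P}_n(c)$ then follows from this together with the bounded distortion of the multiplicity-one chains $f^j|_{\Delta_n(c)}$ ($j<q_n$ or $j<q_{n+1}$) that relate the translates inside the partition of the central pair $\Delta_n(c):=I_n(c)\cup I_{n+1}(c)$ -- the multiplicity being $1$ because those translates are among the pairwise disjoint atoms of $\mathcal{P}_n$, so the cross-ratio inequality and Koebe apply with constants depending only on $(N,d)$.

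Before the core estimate I would record two auxiliary facts. First, the passage from the cross-ratio inequality to genuine distortion bounds: if $f^k|_T$ is a chain of multiplicity at most $m$ and $M\subset T$ has a component of $T\setminus M$ of relative size at least $\tau|M|$ on each side, then $f^k|_M$ has distortion at most $K(\tau,m,N,d)$ and the two components of $f^k(T)\setminus f^k(M)$ again have relative size bounded below in terms of $(\tau,m,N,d)$. Second, the effect of a critical passage: if $c$ has criticality $\gamma\in(1,d]$, the non-flatness $f(x)=f(c)+\phi(x)|\phi(x)|^{\gamma-1}$ of Definition~\ref{defmccm} gives $|f(M)|/|f(T)|\asymp\big(|M|/|T|\big)^{\gamma}$, up to the bounded distortion of $\phi$, whenever $M\subset T$ both have $c$ as an endpoint; thus a critical passage distorts ratios in a controlled but genuinely nonlinear way.

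For the core estimate I would analyse the first return map $R$ of $f$ to $\Delta_n(c)$. It has exactly two branches, one on $I_n(c)$ and one on $I_{n+1}(c)$, given by iterates of order $q_n$ or $q_{n+1}$; one branch performs a single passage through $c$ (because the orbit of $c$ returns close to $c$), so $R$ is a map with a critical point of type $\gamma$, and the combinatorics identify the roughly $a_{n+1}$ atoms of $\mathcal{P}_{n+1}$ that tile $I_n(c)$ with the fundamental domains of $R$ on $I_n(c)$. When $a_{n+1}$ is bounded, $|I_n(c)|\asymp|I_{n+1}(c)|$ follows at once from the bounded distortion of short chains. When $a_{n+1}$ is large, $R$ is \emph{almost parabolic} on $I_n(c)$: it has an almost-fixed point near which its fundamental domains accumulate, and one invokes \'Swi\c{a}tek's lemma on almost parabolic maps (a consequence of the cross-ratio inequality and the power-law at $c$) to conclude that these fundamental domains decay geometrically, so that their \emph{total} length remains comparable to the largest one, hence to $|I_{n+1}(c)|$; this yields $|I_n(c)|\le C|I_{n+1}(c)|$, and the reverse inequality comes out of the same structure. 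Running this as a bootstrap in $n$ -- starting from the finite, $f$-dependent bounds available for small $n$ and showing that the geometry at level $n+1$ is controlled by a definite contraction of that at levels $n-1$ and $n$ -- drives the ratios into a universal interval depending only on $(N,d)$ and also produces the threshold $n_0(f)$.

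The main obstacle is precisely this interaction between large partial quotients and the single critical passage: for a smooth diffeomorphism -- or the rigid rotation itself -- the ratio $|I_n(c)|/|I_{n+1}(c)|$ really is of order $a_{n+1}$, hence unbounded, so any proof must use the non-flatness of $f$ in an essential way, and must do so through cross-ratios rather than the ordinary nonlinearity, which blows up at the critical points. The delicate technical point is to carry out the almost-parabolic analysis and the ensuing bootstrap so that the Koebe space needed to convert cross-ratio control into comparisons of lengths does not deteriorate as $n\to\infty$, uniformly in the number $N$ of critical points and in the bound $d$ on their criticalities.
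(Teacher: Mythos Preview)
The paper does not prove Theorem~\ref{realbounds}: it is quoted as a known result of Herman and \'Swi\c{a}tek, with the remark that ``a detailed proof of Theorem~\ref{realbounds} can also be found in \cite{EdF18,EdFG18}.'' So there is no proof in the paper to compare your proposal against; the theorem functions here purely as background input.

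That said, a couple of comments on your sketch as a stand-alone argument. Your overall architecture --- control the central ratio $|I_n(c)|/|I_{n+1}(c)|$ via cross-ratios and the power-law at $c$, then propagate to all adjacent pairs by bounded-distortion transport --- is the right one and is indeed how the proofs in \cite{swiatek,EdF18,EdFG18} are organised. Two points deserve care. First, you write that in the large-$a_{n+1}$ regime the fundamental domains of the almost-parabolic return ``decay geometrically''; in fact they decay quadratically, like $1/k^2$ (this is exactly the content of Lemma~\ref{yoccozlemma}), and it is the convergence of $\sum k^{-2}$ that makes the total length comparable to the extreme domains. Second, be wary of circularity: in this paper's logical order, Yoccoz's Lemma~\ref{yoccozlemma} is stated for maps with \emph{negative Schwarzian}, and the negative-Schwarzian property (Lemma~\ref{negschwarz}) is itself established using the real bounds. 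The classical \'Swi\c{a}tek argument avoids this loop by working directly with cross-ratios (the Cross-Ratio Inequality) rather than invoking an almost-parabolic lemma that presupposes bounded width; your parenthetical ``a consequence of the cross-ratio inequality and the power-law at $c$'' is the right instinct, but the sketch would need to make that derivation self-contained rather than appeal to Lemma~\ref{yoccozlemma} as stated. Similarly, invoking Theorem~\ref{yoccoztheorem} for minimality is harmless in this paper's presentation, but historically Yoccoz's proof uses estimates of the same flavour, so a fully independent write-up would instead argue directly that the atoms of $\mathcal{P}_n(c)$ shrink.
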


A detailed proof of Theorem \ref{realbounds} can also be found in \cite{EdF18,EdFG18}. In what follows, two positive real numbers $\alpha$ and $\beta$ are said to be {\it comparable modulo\/} $f$  (or simply {\it comparable\/}) if there exists a constant $K>1$, depending only on $f$, such that $K^{-1}\beta\leq \alpha\leq K\beta$. This relation is denoted $\alpha\asymp \beta$. Therefore, Theorem \ref{realbounds} states that $|I|\asymp|J|$ for any two adjacent atoms $I$ and $J$ of a dynamical partition associated to a critical point of $f$.

\subsection{Some geometric tools}\label{sectools} We finish Section \ref{secprel} reviewing some classical tools from one-dimensional dynamics, that will be used along the text. Given two intervals $M\subset T\subset S^{1}$, with $M$ compactly contained in $T$ (written $M\Subset T$), we denote by $L$ and $R$ the two connected components of $T\setminus M$. We define the \emph{space} of $M$ inside $T$ as the smallest of the ratios $|L|/|M|$ and $|R|/|M|$. If the space is $\tau>0$, we say that $T$ contains a \emph{$\tau$-scaled neighbourhood} of $M$.

\begin{lemma}[Koebe distortion principle]\label{koebe} For each $\ell,\tau>0$ and each multicritical circle map $f$ there exists a 
constant $K=K(\ell,\tau,f)>1$ of the form
$$K= \left(1 + \frac{1}{\tau} \right)^2 \exp (C_0\,\ell)\,,$$where $C_0$ is a constant depending only on $f$, with the following property. If\, $T$ is an interval such that $f^k|_{T}$ is a diffeomorphism onto its image, for some $k\in\nt$, and if $\sum_{j=0}^{k-1} |f^j(T)|\leq \ell$, then for each interval $M\subset T$ for which $f^k(T)$ contains a $\tau$-scaled neighbourhood of $f^k(M)$ one has
\[
\frac{1}{K}\leq \frac{|Df^k(x)|}{|Df^k(y)|}\leq K\quad\mbox{for all $x,y\in M$.}
\]
\end{lemma}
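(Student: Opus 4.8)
The plan is to deduce Koebe's principle from the \emph{cross-ratio inequality}, used both for $f$ and for its inverse $f^{-1}$ (which is again a multicritical circle map). Recall that this inequality provides a universal bound on the distortion of cross-ratios under iteration: if $f^k|_I$ is a diffeomorphism and $\sum_{j=0}^{k-1}|f^j(I)|\le\ell$, then for every interval $J\Subset I$, writing $I\setminus J=I'\sqcup I''$ with $I'$ to the left of $J$ and $I''$ to its right, and setting $I_L=I'\cup J$, $I_R=J\cup I''$, one has
\[
e^{-C_0\ell}\,\frac{|I|\,|J|}{|I_L|\,|I_R|}\ \le\ \frac{|f^k(I)|\,|f^k(J)|}{|f^k(I_L)|\,|f^k(I_R)|}\ \le\ e^{C_0\ell}\,\frac{|I|\,|J|}{|I_L|\,|I_R|},
\]
where $C_0=C_0(f)>0$; the left inequality is the usual cross-ratio inequality applied to $f$ along the orbit $I,f(I),\dots,f^{k-1}(I)$, and the right one is its analogue for $f^{-1}$ along the reversed orbit. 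Since any interval $I\subseteq T$ inherits both hypotheses from $T$, this holds for all $J\Subset I\subseteq T$.

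First I would feed into this the degenerate configuration with $I=T=[t^-,t^+]$ and $J$ collapsing to a point $z\in M$. Passing to the limit converts the two-sided cross-ratio bound into the pointwise comparison
\[
|Df^k(z)|\ \asymp\ \frac{|T|}{|f^k(T)|}\cdot\frac{\bigl|f^k([t^-,z])\bigr|\cdot\bigl|f^k([z,t^+])\bigr|}{|[t^-,z]|\cdot|[z,t^+]|}\qquad(z\in M),
\]
valid with multiplicative error $e^{C_0\ell}$. Dividing this comparison at $z=x$ by the one at $z=y$ expresses $|Df^k(x)|/|Df^k(y)|$, up to a factor $e^{2C_0\ell}$, as the product of an \emph{image ratio} $\tfrac{|f^k([t^-,x])|\,|f^k([x,t^+])|}{|f^k([t^-,y])|\,|f^k([y,t^+])|}$ and a \emph{Euclidean ratio} $\tfrac{|[t^-,y]|\,|[y,t^+]|}{|[t^-,x]|\,|[x,t^+]|}$.

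The image ratio is bounded by $1+\tfrac1\tau$ straight from the hypothesis: writing $M=[m^-,m^+]$, since $f^k(x),f^k(y)\in f^k(M)$ and $f^k(T)$ contains a $\tau$-scaled neighbourhood of $f^k(M)$, the lengths $|f^k([t^-,x])|$ and $|f^k([t^-,y])|$ differ by at most $|f^k(M)|\le\tfrac1\tau\bigl|f^k([t^-,m^-])\bigr|\le\tfrac1\tau\bigl|f^k([t^-,x])\bigr|$, and symmetrically on the right. For the Euclidean ratio I would apply the cross-ratio inequality once more, now to $I=T$ and $J=M$, to deduce that $M$ is itself nested in $T$ with space comparable (up to the factor $e^{C_0\ell}$) to $\tau$; the same elementary estimate then controls the Euclidean ratio. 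Multiplying the two bounds and using $1+e^{C_0\ell}/\tau\le e^{C_0\ell}(1+1/\tau)$ to absorb the accumulated exponentials into a single constant, one arrives at $K=(1+\tfrac1\tau)^2\exp(C_0\ell)$ of the required shape (with $C_0$ renamed).

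The step I expect to be the main obstacle is precisely this transfer of information between the image side and the domain side. The cross-ratio inequality naturally controls lengths \emph{after} applying $f^k$, whereas the conclusion of Koebe's principle concerns derivatives on the \emph{domain} and the hypothesis lives on the \emph{image}; keeping the final constant in the clean form $(1+\tfrac1\tau)^2\exp(C_0\ell)$ — rather than letting $1/\tau$ appear with an exponent that degrades as $\ell$ grows — is where the bookkeeping must be carried out carefully, the key point being that the displayed comparison pins $|Df^k|$ to an \emph{explicit} quantity and not merely to itself.
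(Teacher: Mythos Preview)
The paper does not prove this lemma; it merely refers the reader to de~Melo--van~Strien \cite{dMvS}, Section~IV.3, Theorem~3.1. So the comparison is against the standard argument recorded there, and your attempt must stand on its own.

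There is a genuine gap at the very first step. You justify the right-hand inequality in your two-sided cross-ratio estimate by asserting that $f^{-1}$ ``is again a multicritical circle map'' and invoking the cross-ratio inequality for it. This is false: at each critical value $f(c)$ the inverse behaves like $y\mapsto\big(y-f(c)\big)^{1/d}$, a cusp with unbounded derivative, not a non-flat critical point in the sense of Definition~\ref{defmccm}; no cross-ratio inequality of the required form is available for $f^{-1}$. Worse, the upper bound you want is simply not true in general. Near a critical point one has $Sf(x)\asymp -(x-c)^{-2}$, so on an interval $T=[c+\epsilon,\,c+\epsilon+L]$ with $\epsilon\ll L$ the one-step expansion of your cross-ratio is a constant bounded away from~$1$ \emph{independently of~$L$}: for $f(x)=x^3$ and $J$ the second quarter of $T$ one computes, in the limit $\epsilon\to 0$, that $\tfrac{|I||J|}{|I_L||I_R|}$ goes from $2/3$ to $8/9$, a factor $4/3$. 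This is incompatible with a bound of the form $e^{C_0|T|}$ for small $|T|$, and iterating such passes destroys the global bound $e^{C_0\ell}$.

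The textbook proof avoids this entirely by using only the \emph{one-sided} inequality: near critical points $Sf<0$ strictly expands the cross-ratio $\tfrac{|I||J|}{|I_L||I_R|}$, which only helps the lower bound, while away from them $|Sf|$ is uniformly bounded, and the sum-of-lengths hypothesis then yields the factor $e^{-C_0\ell}$. From that one-sided input the conclusion $(1+1/\tau)^2\exp(C_0\ell)$ follows by the classical Koebe argument. Your limiting trick $J\to\{z\}$ does correctly deliver a \emph{lower} bound on $Df^k(z)$ in terms of explicit lengths; but controlling the ratio $Df^k(x)/Df^k(y)$ from above also requires an \emph{upper} bound on $Df^k(x)$, and that is precisely what your unjustified right-hand inequality was supposed to provide. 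As written, the argument does not close.
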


A proof of Koebe distortion principle can be found in \cite[Section IV.3, Theorem 3.1]{dMvS}. We define the \emph{cross-ratio} of the pair $M,T$ to be the ratio
\begin{equation*}
[M,T]= \frac{|L|\,|R|}{|L \cup M|\,|R \cup M|}\, \in (0,1).
\end{equation*}
The \textit{cross-ratio distortion} of a homeomorphism $f:S^{1}\to S^{1}$ on the pair $M,T$ is defined as
\begin{equation*}
\crd(f;M,T)= \frac{\big[f(M),f(T)\big]}{[M,T]}\,.
\end{equation*}
We have the following chain rule for the cross-ratio distortion:$$\crd(f^j;M,T) = \prod_{i=0}^{j-1} \crd\big(f;f^{i}(M), f^{i}(T)\big)\,.$$Given a family of intervals $\mathcal{F}$ on $S^{1}$ and a positive integer $m$, we say that $\mathcal{F}$ has {\it multiplicity of intersection at most $m$\/} if each $x\in S^{1}$ belongs to at most $m$ elements of $\mathcal{F}$. 

\begin{customthm}{} Given a multicritical critical circle map $f:S^1\to S^1$, there exists a constant $C>1$, depending only on $f$, 
such that the following holds. If $M_i\Subset T_{i} \subset S^1$, where $i$ runs through some finite set of indices $\mathcal{I}$, 
are intervals on the circle such that the family $\{T_i: i\in \mathcal{I}\}$ 
has multiplicity of intersection at most $m$, then$$\prod_{i \in \mathcal{I}} \crd(f;M_{i},T_{i}) \leq C^{m}.$$
\end{customthm}

The Cross-Ratio Inequality was obtained by \'Swi\c{a}tek in \cite{swiatek} (see also \cite[Theorem B]{EdFG18}). A sketch of the proof can be found in \cite[page 5589]{EdF18}. We remark that similar estimates were used before by Yoccoz \cite{yoccoz1}, on his way to proving Theorem \ref{yoccoztheorem} (see \cite[Chapter IV]{dMvS} for this and much more). Now recall that, for a given $C^3$ map $f$, the \textit{Schwarzian derivative} of $f$ is the differential operator defined for all $x$ regular point of $f$ by
 \begin{equation*}
  Sf(x)= \dfrac{D^{3}f(x)}{Df(x)} - \dfrac{3}{2} \left( \dfrac{D^{2}f(x)}{Df(x)}\right)^{2}.
 \end{equation*}

We recall now the definition of an \emph{almost parabolic map}, as given in \cite[Section 4.1, page 354]{dFdM99}.

\begin{definition}\label{def:apm} An \textit{almost parabolic map} is a negative-Schwarzian $C^3$ diffeomorphism$$\phi \colon  J_1\cup J_2\cup \cdots \cup J_\ell \;\to\; J_2\cup J_3\cup \cdots \cup J_{\ell+1},$$such that $\phi(J_k)= J_{k+1}$ for all $1\leq k\leq \ell$, where $J_1,J_2, \ldots, J_{\ell+1}$ are consecutive intervals on the circle (or on the line). The positive integer $\ell$ is called the \textit{length} of $\phi$, and the positive real number
  \[
    \sigma =\min\left\{\frac{|J_1|}{|\cup_{k=1}^\ell J_k|}\,,\, \frac{|J_\ell|}{|\cup_{k=1}^\ell J_k|}     \right\}
  \]is called the \textit{width\/} of $\phi$.
  \end{definition}

The fundamental geometric control on almost parabolic maps is given by the following result.

  \begin{lemma}[Yoccoz's lemma]\label{yoccozlemma}
  Let $\phi \colon \bigcup_{k=1}^\ell J_k \to \bigcup_{k=2}^{\ell+1} J_k$ be an almost parabolic map with length $\ell$ and
  width $\sigma$. There exists a constant $C_\sigma>1$ (depending on $\sigma$ but not on $\ell$) such that,
  for all $k=1,2,\ldots,\ell$, we have
  \begin{equation}\label{yocineq}
    \frac{C_\sigma^{-1}|I|}{[\min\{k,\ell-k\}]^2} \;\leq\; |J_k| \;\leq\;  \frac{C_\sigma|I|}{[\min\{k,\ell-k\}]^2}\ ,
  \end{equation}
  where $I=\bigcup_{k=1}^\ell J_k$ is the domain of $\phi$.
  \end{lemma}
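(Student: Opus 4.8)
The plan is to lift $\phi$ to the real line, so that the consecutive intervals $J_1,\dots,J_{\ell+1}$ become a single interval $[x_0,x_{\ell+1}]$ with $x_0<x_1<\dots<x_{\ell+1}$, $J_k=[x_{k-1},x_k]$, and $\phi(x_j)=x_{j+1}$ for $0\le j\le\ell$; thus $\phi^{j}$ is a diffeomorphism of $J_1\cup\dots\cup J_{\ell+1-j}$ onto $J_{j+1}\cup\dots\cup J_{\ell+1}$. Writing $a_k=|J_k|=x_k-x_{k-1}$, we must bound the $a_k$ from above and below by $C_\sigma^{\pm1}\,|I|/\min\{k,\ell-k\}^2$. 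The two‑sided nature of $\min\{k,\ell-k\}$ reflects the fact that an almost parabolic map is, heuristically, a composition of two ``parabolic bottlenecks'' sitting near the two ends of $I$; accordingly I would prove the bound $a_k\asymp_\sigma |I|/k^2$ for $1\le k\le\lceil\ell/2\rceil$ by an argument anchored at the left endpoint $x_0$, obtain the analogous bound $a_k\asymp_\sigma|I|/(\ell-k)^2$ for the right half by the mirror‑symmetric argument anchored at $x_\ell$, and then combine them.

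The only use of the hypothesis $S\phi<0$ is through the cross‑ratio: a negative‑Schwarzian diffeomorphism never decreases the cross‑ratio of a nested pair, so $\crd(\phi;M,T)\ge 1$ for every $M\Subset T$ contained in its domain, and hence, by the chain rule for $\crd$, $\crd(\phi^{j};M,T)\ge 1$, i.e. $[\phi^{j}(M),\phi^{j}(T)]\ge[M,T]$, whenever $\phi^{j}|_T$ is defined. Applying this with three consecutive atoms, $M=J_k$ and $T=[x_{k-2},x_{k+1}]$, a short computation turns $[\phi(M),\phi(T)]\ge[M,T]$ into the recursion $\frac{r_{k+1}}{1+r_{k+1}}\ge\frac{r_k}{1+r_{k-1}}$ for the ratios $r_k=a_{k+1}/a_k$ — a discrete expression of the convexity of $k\mapsto\log a_k$ that underlies the shape of \eqref{yocineq}. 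To upgrade this soft information into the sharp $1/k^2$ rate I would feed into the cross‑ratio inequality the larger configurations $M=J_k\Subset T=[x_{k-1-s},x_{k+s}]$ with $s$ of order $k$, iterate them along the orbit while they remain in the domain, and read off comparisons between the ``macroscopic'' lengths $x_k-x_0$, $x_\ell-x_k$ and their shifted counterparts. The width parameter $\sigma$ supplies the base of the induction through $a_1\asymp_\sigma|I|$ and $a_\ell\asymp_\sigma|I|$, while for the boundedly many atoms nearest each endpoint one instead applies Koebe's distortion principle to a bounded number of iterates of $\phi$ to get $a_k\asymp_\sigma|I|$ directly. Putting the left‑half and right‑half estimates together then yields \eqref{yocineq} with a constant depending only on $\sigma$.

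The main obstacle is precisely this double‑bottleneck geometry. Near each extreme atom $J_1$, $J_\ell$ there is no room on the outside, so one cannot apply Koebe or form a ``centred'' cross‑ratio there; this is exactly why the width $\sigma$ must be hypothesized, and why the cross‑ratio estimates must be propagated \emph{inward} from each end and then reconciled in the middle, where both of them are weakest. The second delicate point is sharpness: the monotonicity produced by negative Schwarzian is soft and by itself would give only a crude polynomial bound, so one must track the cross‑ratio and Koebe constants carefully — in effect comparing the behaviour of $\phi$ near each of the two ends of $I$ with the parabolic Möbius model $x\mapsto x/(1-x)$ up to bounded distortion — in order to pin down the exact exponent $2$ in \eqref{yocineq}.
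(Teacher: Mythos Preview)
The paper does not supply its own proof of Yoccoz's lemma: immediately after the statement it simply refers the reader to \cite[Appendix~B, p.~386]{dFdM99}. So there is no in-paper argument to compare against.

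That said, your plan is the standard one and is essentially the argument carried out in the cited appendix of de~Faria--de~Melo. The ingredients you single out --- lifting to the line, cross-ratio expansion under $S\phi<0$ (hence $\crd(\phi^j;M,T)\ge 1$), the recursion $\frac{r_{k+1}}{1+r_{k+1}}\ge\frac{r_k}{1+r_{k-1}}$ obtained from three consecutive atoms, the use of the width $\sigma$ as boundary data at both ends, and the comparison with the parabolic M\"obius model $x\mapsto x/(1-x)$ to pin down the exponent $2$ --- are exactly the ones used there. Your identification of the two difficulties (no outer space at the extreme atoms; sharpness of the exponent) is also on point.

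What you have written is an outline rather than a proof, and the one place where a reader would want to see actual work is the passage from the soft monotonicity/convexity information to the precise rate $|J_k|\asymp_\sigma |I|/\min\{k,\ell-k\}^2$. In the reference this is done by an explicit comparison argument (effectively bounding $\phi$ between two M\"obius transformations determined by $\sigma$), and your sentence ``feed into the cross-ratio inequality the larger configurations $M=J_k\Subset T=[x_{k-1-s},x_{k+s}]$ with $s$ of order $k$'' gestures at this but does not carry it out. If you fill in that step along the lines you indicate, the argument goes through.
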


For a proof of Lemma \ref{yoccozlemma} see \cite[Appendix B, page 386]{dFdM99}. To be allowed to use Yoccoz's lemma we will
need the following result.

\begin{lemma}\label{negschwarz} For any given multicritical circle map $f$ there exists $n_0=n_0(f)\in\nt$ such that for any given critical point $c$ of $f$ and for any $n \geq n_0$ we have
that
\[ Sf^{j}(x)<0\quad\text{for all $j\in \{1, \cdots, q_{n+1}\}$ and for all $x \in I_{n}(c)$ regular point of $f^{j}$.}
 \]
Likewise, we have
\[ Sf^{j}(x)<0\quad\text{for all $j\in \{1, \cdots, q_{n}\}$ and for all $x \in I_{n+1}(c)$ regular point of $f^j$}.
 \]
\end{lemma}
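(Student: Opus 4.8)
The idea is to isolate the behaviour of the Schwarzian derivative near the critical points, where it diverges to $-\infty$, and to show that this single very negative contribution overwhelms all the others. Since $f$ has finitely many critical points it is enough to produce, for each fixed critical point $c$, an integer $n_0(c)$ that works for $c$, and to take the maximum; and it is enough to prove the first assertion (for $I_n(c)$ and $j\le q_{n+1}$), the second being identical with the short atoms $I_{n+1}^i$ ($0\le i<q_n$) playing the role of the long atoms $I_n^i$ ($0\le i<q_{n+1}$). So fix $c$, a large $n$, an integer $j\in\{1,\dots,q_{n+1}\}$ and a regular point $x\in I_n(c)$ of $f^j$. The chain rule for the Schwarzian, $S(g\circ h)=(Sg\circ h)\cdot(h')^2+Sh$, gives by induction
\[
  Sf^j(x)\;=\;\sum_{i=0}^{j-1}Sf\big(f^i(x)\big)\,\big(Df^i(x)\big)^2,
\]
and the plan is to show that, for $n$ large, the term $i=0$ alone is more negative than the sum of all the remaining terms.

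\textbf{Local picture at the critical points.} If $c$ has criticality $d>1$, then near $c$ we may write $f=P_d\circ\phi$, where $P_d(t)=t\,|t|^{d-1}$ and $\phi$ is a $C^3$ diffeomorphism with $\phi(c)=0$ (post-composing with a translation, which does not affect the Schwarzian). A direct computation gives $SP_d(t)=(1-d^2)/(2t^2)$, so by the chain rule $Sf(y)=\tfrac{1-d^2}{2}\,\phi'(y)^2/\phi(y)^2+S\phi(y)$; since $\phi(y)/\phi'(y)=(1+o(1))(y-c)$ as $y\to c$ and $S\phi$ is bounded near $c$, there are constants $c_0=c_0(f)>0$, $C_1=C_1(f)$ and $\rho>0$ with
\[
  Sf(y)\;\le\;-\,\frac{c_0}{(y-c)^2}\;+\;C_1\qquad\text{whenever }0<|y-c|\le\rho.
\]
In particular $Sf\to-\infty$ at every critical point, so there is an open neighbourhood $V$ of $\crit(f)$ on which $Sf<0$ (off $\crit(f)$), while on the compact set $S^1\setminus V$ the Schwarzian $Sf$ is continuous and hence bounded, say $|Sf|\le M$ there. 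By the real bounds the atoms of $\mathcal P_n(c)$ have length tending to $0$ as $n\to\infty$; since $I_n(c)$ has $c$ as an endpoint, for $n$ large we get $I_n(c)\subset B(c,\rho)\cap V$, and therefore the $i=0$ term satisfies $Sf(x)\le-c_0/|I_n(c)|^2+C_1$.

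\textbf{Controlling the remaining terms.} The points $x,f(x),\dots,f^{j-1}(x)$ lie in the pairwise disjoint long atoms $I_n^0,I_n^1,\dots,I_n^{j-1}$ of $\mathcal P_n(c)$. For $n$ large any such atom that meets $V$ is contained in $V$, so every term with $f^i(x)\in V$ is $\le 0$ and can be discarded; and at most $2N$ of the long atoms contain a critical point of $f$ in their closure. Cutting $\{0,\dots,j-1\}$ at those finitely many ``critical indices'' writes $f^i|_{I_n(c)}$ as an alternation of at most $2N{+}1$ diffeomorphic excursions and at most $2N$ single steps across a critical atom. By the real bounds and Koebe's distortion principle (equivalently, the cross-ratio inequality), each diffeomorphic excursion $I_n^a\to I_n^b$ has distortion bounded by a constant $K_0=K_0(f)$, so $|Df^{\,b-a}|\le K_0\,|I_n^b|/|I_n^a|$ on $I_n^a$; and for a single step across a critical atom $I_n^{e}$ one has, directly from the non-flat local form, $Df\big(f^{e}(x)\big)\le\max_{I_n^{e}}|Df|\le\mathrm{const}\cdot|I_n^{e+1}|/|I_n^{e}|$. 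Multiplying these bounds along the orbit, the intermediate lengths telescope and yield a constant $A=A(f)$ with
\[
  Df^i(x)\;\le\;A\,\frac{|I_n^i|}{|I_n(c)|}\qquad\text{for all }0\le i\le q_{n+1}-1.
\]
Since the $I_n^i$ are disjoint, $\sum_i|I_n^i|^2\le\big(\max_i|I_n^i|\big)\sum_i|I_n^i|\le\max_i|I_n^i|$, and hence $\sum_{i=0}^{q_{n+1}-1}\big(Df^i(x)\big)^2\le A^2\,\max_i|I_n^i|\,/\,|I_n(c)|^2$.

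\textbf{Conclusion.} Bounding the surviving terms by $M\big(Df^i(x)\big)^2$ and using the estimates above,
\[
  Sf^j(x)\;\le\;\frac{1}{|I_n(c)|^2}\Big(-\,c_0\;+\;M A^2\max_i|I_n^i|\;+\;C_1\,|I_n(c)|^2\Big),
\]
and the right-hand side is strictly negative once $n$ is large enough, because $\max_i|I_n^i|\to0$ and $|I_n(c)|\to0$ while $c_0>0$ depends only on $f$. The main obstacle is the distortion estimate $Df^i(x)\le A\,|I_n^i|/|I_n(c)|$: one has to carry multiplicative control of $Df^i$ through the (boundedly many) critical atoms, which is exactly where the real bounds and Koebe's principle / the cross-ratio inequality are needed, and where the telescoping of length ratios across a critical step — the comparison $\max_{I_n^{e}}|Df|\asymp|I_n^{e+1}|/|I_n^{e}|$ forced by non-flatness — must be checked with some care.
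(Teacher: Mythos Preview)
Your plan is correct and follows essentially the same route the paper uses. The paper does not prove Lemma~\ref{negschwarz} in the main text (it is cited to \cite[Lemma~4.1]{EdFG18}), but in Appendix~\ref{app} it proves the more general Proposition~\ref{lemmanegsch} by exactly the strategy you outline: write $Sf^j(x)=\sum_{i}Sf(f^i(x))(Df^i(x))^2$, use the non-flatness estimate $Sf(y)\le -K_0/(y-c)^2$ to make the first critical term dominate, discard the remaining ``critical'' terms as negative, and control the ``regular'' terms via the $C^1$ bound $Df^i(x)\le K\,|I_n^i|/|I_n|$ together with $\sum_i|I_n^i|^2\le\max_i|I_n^i|$. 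The $C^1$ bound you flag as the main obstacle is precisely Proposition~\ref{lemmaC1bounds}, and your sketch of it (Koebe on diffeomorphic stretches between critical times, telescoping across the $\le 2N$ critical steps using $\max_{I_n^e}|Df|\asymp|I_n^{e+1}|/|I_n^e|$) matches the paper's proof.
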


For a proof of Lemma \ref{negschwarz} see \cite[Lemma 4.1, page 852]{EdFG18}.

\section{The Katznelson criterion}\label{seckatz}

As stated in the introduction, the proof of Theorem \ref{main} will consist of two separate arguments. The first argument (see \S \ref{secfirststep} below) deals with all irrational rotation numbers {\it except\/} those numbers (of bounded type) whose partial quotients are bounded by a certain constant $B$ that depends only on the real bounds (Theorem \ref{realbounds}). 
The second argument (see \S \ref{secsecstep} below) takes care of the bounded type case. 
The arguments presented in both proofs exploit different aspects of the geometry of multicritical circle maps: the first uses the real bounds and Yoccoz's lemma, whereas the second uses only the real bounds. 

Despite these differences, both parts of the proof will be based on a criterion for non-existence of $\sigma$-finite measures which is a slightly generalized version of a criterion given by Katznelson \cite[Th.~1.1]{katz1}. Consider the following standing hypothesis on the geometry of the dynamical partitions $\mathcal{P}_n(c_0)$ of a $C^1$ minimal homeomorphism $f: S^1\to S^1$ with respect to a given point $c_0\in S^1$. 
\medskip

\noindent{\sl{Standing Hypothesis}\/}. There exist a sequence $\mathbb{N} \ni n_k\to \infty$ of ``good levels'' and constants $1<b_0<b_1$ and $0<\theta<1$ such that the following holds. For each $\Delta\in \mathcal{P}_{n_k}(c_0)$, the collection $\mathcal{A}^{\Delta}=\{J\in \mathcal{P}_{n_k+1}(c_0):\;J\subset \Delta\}$ can be decomposed as a disjoint union $\mathcal{A}^{\Delta}=\mathcal{A}^{\Delta}_1\cup 
\mathcal{A}^{\Delta}_2\cup \mathcal{A}^{\Delta}_3$ with the following properties:
\begin{enumerate}
 \item[(i)] For each $J_1\in \mathcal{A}^{\Delta}_1$ and each $J_2\in \mathcal{A}^{\Delta}_2$ we have $|J_1|\geq b_0|J_2|$;
 \item[(ii)] For each $J_1\in \mathcal{A}^{\Delta}_1$ and each $J_2\in \mathcal{A}^{\Delta}_2$ there exists $k\in \mathbb{N}$ such that $f^k|_{J_1}$ is a diffeomorphism mapping $J_1$ onto $J_2$, and we have $Df^k(x)\geq b_1^{-1}$ for all $x\in J_1$.
 \item[(iii)] We have $\lambda(\Omega)\geq \theta|\Delta|$, where 
 \[
  \Omega = \bigcup_{J\in \mathcal{A}^{\Delta}_1 \cup\mathcal{A}^{\Delta}_2} J\ .
 \]
 \item[(iv)] The sub-collections $\mathcal{A}^{\Delta}_1$ and $\mathcal{A}^{\Delta}_2$ have the same number of elements.{\footnote{Note that nothing is said about the sub-collection $\mathcal{A}^{\Delta}_3$: it plays no role in the arguments to come.}}
 \end{enumerate}
\medskip

\begin{theorem}\label{katzcrit}
 Let $f:S^1\to S^1$ be a $C^1$ minimal homeomorphism satisfying the above standing hypothesis. Then $f$ does not admit a $\sigma$-finite invariant measure which is absolutely continuous with respect to Lebesgue measure. 
\end{theorem}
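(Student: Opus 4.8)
The plan is to argue by contradiction. Suppose $\mu$ is a $\sigma$-finite $f$-invariant measure with $\mu\ll\lambda$, write $\psi=d\mu/d\lambda$ (so $0<\psi<\infty$ $\lambda$-a.e.\ and hence $\mu$ and $\lambda$ are mutually absolutely continuous), and set $\phi=1/\psi=d\lambda/d\mu$, so that $\phi>0$ $\mu$-a.e.\ and $\int_{S^1}\phi\,d\mu=\lambda(S^1)=1$; in particular $\phi\in L^1(\mu)$. The idea is to feed $\phi$ into the martingale convergence theorem along the filtration generated by the dynamical partitions $\mathcal{P}_n(c_0)$, and to contradict the vanishing of the martingale increments by a uniform lower bound forced, along the good levels, by the standing hypothesis.

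Let $\mathcal{F}_n$ be the $\sigma$-algebra generated by $\mathcal{P}_n(c_0)$. The $\mathcal{P}_n(c_0)$ are nested, so $\mathcal{F}_n\subseteq\mathcal{F}_{n+1}$, and since $f$ is minimal (hence, by Poincaré, conjugate to an irrational rotation) the maximal length of an atom of $\mathcal{P}_n(c_0)$ tends to $0$; thus $\bigvee_n\mathcal{F}_n$ is the full Borel $\sigma$-algebra modulo $\mu$. Hence $\phi_n:=\mathbb{E}_\mu[\phi\mid\mathcal{F}_n]$ is a uniformly integrable martingale with $\phi_n\to\phi$ in $L^1(\mu)$, whence
\[
\big\|\phi_{n_k+1}-\phi_{n_k}\big\|_{L^1(\mu)}\xrightarrow[k\to\infty]{}0 .
\]
On an atom $\Delta\in\mathcal{P}_n(c_0)$ of finite $\mu$-measure one has $\phi_n\equiv\lambda(\Delta)/\mu(\Delta)$. (If $\mu$ is infinite some atoms have infinite $\mu$-mass; one then first replaces $S^1$ by a Borel set of finite $\mu$-measure and almost full $\lambda$-measure, or refines the $\mathcal{F}_n$ by the sublevel sets of $\psi$. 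For $\mu$ finite — in particular when ruling out an absolutely continuous invariant \emph{probability} — no such adjustment is needed.)

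Next I would bound the increments from below at the good levels. Fix a good level $n=n_k$ and an atom $\Delta\in\mathcal{P}_n(c_0)$ of finite $\mu$-measure, and use (iv) to fix a bijection between $\mathcal{A}^{\Delta}_1=\{J_1^{(1)},\dots,J_1^{(p)}\}$ and $\mathcal{A}^{\Delta}_2=\{J_2^{(1)},\dots,J_2^{(p)}\}$. By (ii), for each $i$ some iterate $f^{k_i}$ maps $J_1^{(i)}$ homeomorphically onto $J_2^{(i)}$ with $Df^{k_i}\geq b_1^{-1}$; since $f$ is a circle homeomorphism this forces $J_1^{(i)}=f^{-k_i}(J_2^{(i)})$, so $\mu(J_1^{(i)})=\mu(J_2^{(i)})=:m_i$ by invariance, while $\lambda(J_2^{(i)})=\int_{J_1^{(i)}}Df^{k_i}\,d\lambda\geq b_1^{-1}\lambda(J_1^{(i)})$. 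Together with (i), i.e.\ $\lambda(J_1^{(i)})\geq b_0\lambda(J_2^{(i)})$, the function $\phi_{n+1}$ equals $\lambda(J_1^{(i)})/m_i$ on $J_1^{(i)}$ and $\lambda(J_2^{(i)})/m_i$ on $J_2^{(i)}$, so these two values differ by at least $(b_0-1)\lambda(J_2^{(i)})/m_i$. Writing $\phi_n|_\Delta$ for the constant value of $\phi_n$ on $\Delta$, the triangle inequality gives
\[
m_i\Big(\big|\phi_{n+1}|_{J_1^{(i)}}-\phi_n|_\Delta\big|+\big|\phi_{n+1}|_{J_2^{(i)}}-\phi_n|_\Delta\big|\Big)\;\geq\;(b_0-1)\,\lambda(J_2^{(i)}).
\]
Since $\mathcal{A}^{\Delta}_1$ and $\mathcal{A}^{\Delta}_2$ are disjoint families of children of $\Delta$, summing over $i$ shows that the contribution of $\Delta$ to $\|\phi_{n+1}-\phi_n\|_{L^1(\mu)}=\sum_{\Delta}\sum_{J\subseteq\Delta}\mu(J)\,|\phi_{n+1}|_{J}-\phi_n|_\Delta|$ is at least $(b_0-1)\sum_i\lambda(J_2^{(i)})$. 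Finally, $\lambda(J_1^{(i)})\leq b_1\lambda(J_2^{(i)})$ yields $\sum_i\lambda(J_1^{(i)})\leq b_1\sum_i\lambda(J_2^{(i)})$, so by (iii)
\[
\sum_i\lambda(J_2^{(i)})\;\geq\;\frac{1}{1+b_1}\,\lambda(\Omega^\Delta)\;\geq\;\frac{\theta}{1+b_1}\,\lambda(\Delta).
\]
Summing over all $\Delta\in\mathcal{P}_n(c_0)$ and using $\sum_\Delta\lambda(\Delta)=1$,
\[
\big\|\phi_{n_k+1}-\phi_{n_k}\big\|_{L^1(\mu)}\;\geq\;\frac{(b_0-1)\,\theta}{1+b_1}\;>\;0
\]
for every good level $n_k$, contradicting the previous display. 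This proves the theorem.

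The heart of the argument is the third paragraph: the standing hypothesis is designed exactly so that $\mathcal{A}^{\Delta}_1$ and $\mathcal{A}^{\Delta}_2$ carry the \emph{same} $\mu$-mass (via (ii) and (iv)) while having comparable but genuinely \emph{different} Lebesgue mass (via (i) and the derivative bound in (ii)), which prevents the Radon--Nikodym density from stabilizing as the partitions refine. I expect the only real difficulties to be the elementary but careful bookkeeping with the constants $b_0,b_1,\theta$, and the (routine) localization needed to make the martingale step rigorous when $\mu$ happens to be an infinite measure.
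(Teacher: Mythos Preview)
Your core observation—that the standing hypothesis produces, inside every atom $\Delta$ at every good level, matched children $J_1,J_2$ with equal $\mu$-mass (via (ii), (iv) and $f$-invariance) but $\lambda(J_1)\geq b_0\,\lambda(J_2)$ (via (i)), and that this is incompatible with a well-behaved Radon--Nikodym density—is precisely the mechanism behind the paper's proof. The routes diverge in how the incompatibility is extracted: you run an $L^1(\mu)$-martingale for $\phi=d\lambda/d\mu$ and bound the increments $\|\phi_{n_k+1}-\phi_{n_k}\|_{L^1(\mu)}$ below, whereas the paper applies the Lebesgue density theorem (for $\lambda$) to a level set $E=\{c\leq\psi\leq c(1+\delta)\}$, locates a single atom $\Delta$ in which $E$ has $\lambda$-density near $1$, and then uses the cocycle identity $\psi=(\psi\circ f^k)\,Df^k$ directly on one pair $J_1,J_2$ to reach a numerical contradiction. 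Your finite-$\mu$ computation is correct and arguably slicker than the paper's bookkeeping with $b_0,b_1,\theta$.

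However, the parenthetical handling of infinite $\mu$ is a genuine gap, not a routine localization—and this is exactly the case of interest, since an absolutely continuous invariant \emph{probability} is already excluded by unique ergodicity plus Khanin's theorem. Nothing prevents $\psi$ from being finite $\lambda$-a.e.\ yet failing to be locally integrable on a dense set; in fact, once $\psi$ has a non-integrable singularity at one point, the cocycle identity and minimality propagate it along a dense orbit, so one should expect \emph{every} atom of every $\mathcal{P}_n(c_0)$ to have infinite $\mu$-measure. Then $\mu$ is not $\sigma$-finite on any $\mathcal{F}_n$, the conditional expectations $E_\mu[\phi\mid\mathcal{F}_n]$ are undefined (the Radon--Nikodym derivative of $\lambda|_{\mathcal{F}_n}$ with respect to $\mu|_{\mathcal{F}_n}$ does not exist on an atom with $\mu(\Delta)=\infty$ and $\lambda(\Delta)<\infty$), and your increment bound has nothing to attach to. Neither suggested fix preserves the identity $\mu(J_1^{(i)})=\mu(J_2^{(i)})$: restricting to a sublevel set of $\psi$ destroys $f$-invariance, and refining the filtration by those sublevel sets replaces the interval atoms by sets on which the pairing in the standing hypothesis no longer matches $\mu$-masses. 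The paper's approach sidesteps all of this because the Lebesgue density theorem concerns the \emph{finite} measure $\lambda$, and the only role $\mu$ plays is through the pointwise cocycle identity, which requires no integrability of $\psi$ over intervals.
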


\begin{proof}
 Assume by contradiction that there exists a $\sigma$-finite measure $\mu$ which is invariant under $f$ and is absolutely continuous with respect to Lebesgue measure. 
 Let $\psi=d\mu/d\lambda$ be the corresponding Radon-Nikodym derivative. This is a Borel measurable function which is positive and finite Lebesgue a.e., and it satisfies the cocycle identity \eqref{radon}. By an easy induction, that cocycle identity can be written more generally as
 \begin{equation}\label{kradon}
  \psi(x) = 
  \psi\circ f^k(x)\cdot Df^k(x)\ \ \ \textrm{for Lebesgue a.e.}\ x\in S^1\ , \ \textrm{for all}\ k\in \mathbb{Z}\ .
  \end{equation}  
 Fix a small number $0<\delta<1$; we will need it small enough that $(1+\delta)^{-1}b_0>1$. For each real number $c$ consider the Borel set $E_c=\{x\in S^1:\,c\leq \psi(x)\leq c(1+\delta)\}$. Then we have $\lambda(E_c)>0$ for some choice of $c$. We choose such $c$ and from now on write $E=E_c$. 
 
 By the Lebesgue density theorem, $\lambda$-a.e. $x\in E$ is such that the density of $E$ at $x$ is $1$. Hence for each $\epsilon>0$ we can find a good level $n_k\in \mathbb{N}$ and an atom $\Delta\in \mathcal{P}_{n_k}(c_0)$ such that 
 \begin{equation}\label{nosigma1}
  \frac{\lambda(E\cap \Delta)}{|\Delta|} \geq 1-\epsilon\ .
 \end{equation}
 We will show that the assumption at the start of this proof contradicts our standing hypothesis on $f$ if we take $\epsilon$ sufficiently small. How small $\epsilon$ has to be will be determined in the course of the argument to follow. 
 
 Let $\mathcal{A}^\Delta$ and $\mathcal{A}^\Delta_i$, $i=1,2,3$ be as defined before, and for $i=1,2$ let $\Omega_i=\bigcup_{J\in \mathcal{A}^\Delta_i} J$.
 Then (iii) in our standing hypothesis tells us that $\Omega=\Omega_1\cup \Omega_2$ satisfies $\lambda(\Omega)\geq \theta|\Delta|$. Hence from \eqref{nosigma1} we have
 \begin{equation}\label{nosigma2}
  \frac{\lambda(E\cap \Omega)}{\lambda(\Omega)} \geq 1-\epsilon\theta^{-1}\ ,
 \end{equation}
provided $\epsilon$ is so small that $\epsilon\theta^{-1}<1$. Note that our standing hypothesis also tells us that $b_0\lambda(\Omega_2) \leq \lambda(\Omega_1) \leq b_1\lambda(\Omega_2)$. 
These inequalities imply that
\begin{equation}\label{nosigma3}
 \lambda(\Omega)\leq (1+b_0^{-1})\lambda(\Omega_1)\ \ \textrm{and}\ \ 
 \lambda(\Omega)\leq (1+b_1)\lambda(\Omega_2) \ .
\end{equation}
Using \eqref{nosigma2} and the first inequality in \eqref{nosigma3}, we get
\begin{align*}
 \lambda(\Omega_1) &\leq \lambda(E\cap \Omega_1) + \lambda(\Omega\setminus E) \\
 {} &\leq \lambda(E\cap \Omega_1) + \epsilon\theta^{-1}\lambda(\Omega) \\
 {} &\leq \lambda(E\cap \Omega_1) + \epsilon\theta^{-1}(1+ b_0^{-1})\lambda(\Omega_1)\ .
\end{align*}
Hence we have 
\begin{equation}\label{nosigma4}
  \frac{\lambda(E\cap \Omega_1)}{\lambda(\Omega_1)} \geq 1-\epsilon\theta^{-1}(1+b_0^{-1})\ ,
 \end{equation}
and this lower bound will be positive (in fact close to one) provided $\epsilon$ is sufficiently small. 
Similarly, using \eqref{nosigma2} and the second inequality in \eqref{nosigma3}, we deduce that
\begin{equation}\label{nosigma5}
  \frac{\lambda(E\cap \Omega_2)}{\lambda(\Omega_2)} \geq 1-\epsilon\theta^{-1}(1+b_1)\ .
 \end{equation}
Thus, writing $\eta=\epsilon\theta^{-1}\max\{1+b_0^{-1}\,,\,1+b_1\}=\epsilon\theta^{-1}(1+b_1)$, we have
\begin{equation}\label{nosigma6}
  \frac{\lambda(E\cap \Omega_i)}{\lambda(\Omega_i)} \geq 1-\eta\ ,\ \ 
  \textrm{for}\ \ i=1,2\ .
\end{equation}
Note that $\eta\to 0$ when $\epsilon\to 0$. Now, since both $\Omega_1$ and $\Omega_2$ are disjoint unions of atoms in $\mathcal{P}_{n_k+1}(c_0)$, it follows from \eqref{nosigma6} that there exist atoms $J_1\in \mathcal{A}^\Delta_1$ and $J_2\in \mathcal{A}^\Delta_2$ such that 
\begin{equation}\label{nosigma7} 
\lambda(J_i\cap E)\geq (1-\eta)|J_i|\ ,\ \  \textrm{for}\  i=1,2\ .
\end{equation}
Let $k\in \mathbb{N}$ be such that $f^k$ maps $J_1$ diffeomorphically onto $J_2$, and let us estimate the Lebesgue measure of $f^{-k}(J_2\setminus E)$.
By (ii) in our standing hypothesis and the chain rule we have $Df^{-k}(y)\leq b_1$ for all $y\in J_2$. Since by \eqref{nosigma7} we have 
$\lambda(J_2\setminus E)\leq \eta|J_2|$, we get
\begin{equation}\label{nosigma8}
 \lambda(f^{-k}(J_2\setminus E)) = \int_{J_2\setminus E} Df^{-k}\,d\lambda \leq b_1\eta|J_2|\ .
\end{equation}
Letting $J_1^*=\{x\in J_1\cap E:\, f^k(x)\in E\}$, it follows from \eqref{nosigma7} and \eqref{nosigma8} that 
\begin{equation}\label{nosigma9}
 \lambda(J_1^*) =\lambda(J_1\cap E) - \lambda(f^{-k}(J_2\setminus E))
 \geq [(1-\eta)b_0 -\eta b_1]\,|J_2|\ .
\end{equation}
But now observe that the equality $\psi=(\psi\circ f^k)Df^k$ holds Lebesgue almost everywhere: this is simply the cocycle identity \eqref{kradon}. Since for every $x\in J_1^*$ we have both $x\in E$ and $f^k(x)\in E$, it follows from this equality and the definition of $E$ that for Lebesgue a.e. $x\in J_1^*$ we have 
$Df^k(x)\geq (1+ \delta)^{-1}$. Therefore 
\begin{equation}\label{nosigma10}
 |J_2| > \lambda(f^k(J_1^*)) = \int_{J_1^*} Df^k\,d\lambda \geq (1+\delta)^{-1}\lambda(J_1^*)\ .
\end{equation}
Combining \eqref{nosigma9} and \eqref{nosigma10} and cancelling out $|J_2|$ from both sides of the resulting inequality, we deduce at last that
\begin{equation}\label{nosigma11}
 (1+ \delta)^{-1}[(1-\eta)b_0 - \eta b_1] < 1\ .
\end{equation}
But since $(1+\delta)^{-1}b_0>1$, the inequality \eqref{nosigma11} is clearly violated if $\eta$ is sufficiently small, which is certainly the case if we choose $\epsilon$ sufficiently small. We have reached the desired contradiction, and the proof is complete. 
\end{proof}

\begin{remark}\label{shortcrit}
 A close inspection of the proof shows that we do not need the full strength of the standing hypothesis. All we need is that, given any interval $I$ on the circle, we can find inside it two disjoint intervals $J',J''$, both comparable in size with $I$, with $|J'|$ greater than $|J''|$ by a definite factor, and an iterate of $f$ mapping $J'$ onto $J''$ with bounded distortion. 
\end{remark}

\section{Proof of Theorem \ref{main}}\label{secproofmain}

We are now ready for the two major steps in the proof of Theorem \ref{main}. 

\subsection{First step}\label{secfirststep} The precise result we shall prove here is the following  weaker version of Theorem \ref{main}. 

\begin{theorem}\label{nosigcrit}
 Given $N\geq 1$ in $\nt$ and $d>1$ there exists a universal constant $B=B(N,d)\in \nt$ such that the following holds. If $f$ is a multicritical circle map with at most $N$ critical points whose criticalities are bounded by $d$, and if the rotation number of $f$ is irrational and its
 partial quotients $a_n$ satisfy $\limsup a_n \geq B$, then $f$ does not admit an invariant $\sigma$-finite measure which is absolutely continuous with respect to Lebesgue measure.
\end{theorem}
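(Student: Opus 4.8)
The plan is to verify the hypotheses of the Katznelson criterion, Theorem~\ref{katzcrit}, in a mild variant in which the \emph{fine} partition is taken two levels finer than the \emph{coarse} one rather than one level finer. This is harmless: the proof of Theorem~\ref{katzcrit} uses only that the fine partition refines the coarse one into atoms of a common partition, so it goes through verbatim with such a two-level gap. Fix a critical point $c_0$ of $f$. Since $\limsup a_n\ge B$ there are infinitely many indices $m$ with $a_m\ge B$; writing $n_k:=m-1$ for these we obtain an infinite sequence of \emph{good levels} $n_k\to\infty$ with $a_{n_k+1}\ge B$. The point is that, at a good level, the large partial quotient $a_{n_k+1}$ forces a long \emph{almost parabolic map} inside the dynamical intervals, and then Yoccoz's lemma together with Koebe's distortion principle extract, inside every atom of $\mathcal{P}_{n_k-1}(c_0)$, a pair of nested intervals of the kind described in Remark~\ref{shortcrit}.

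Here is the core construction. Let $A$ be any long atom of $\mathcal{P}_{n_k}(c_0)$, say $A=f^{i}\big(I_{n_k}(c_0)\big)$ with $0\le i<q_{n_k+1}$. By the standard combinatorics of dynamical partitions, $A$ is subdivided in $\mathcal{P}_{n_k+1}(c_0)$ into consecutive intervals $\Lambda_0,\Lambda_1,\dots$ of type $I_{n_k+1}$ (followed by one of type $I_{n_k+2}$), with $f^{q_{n_k+1}}$ mapping $\Lambda_j$ onto $\Lambda_{j+1}$, and with $\asymp a_{n_k+1}$ of the $\Lambda_j$'s. For $i=0$ this restriction is, for $n_k$ large, a diffeomorphism with negative Schwarzian by Lemma~\ref{negschwarz}, hence part of an almost parabolic map whose extreme intervals are comparable to $I_{n_k}(c_0)$ by the real bounds (Theorem~\ref{realbounds}); its width is therefore bounded below by a universal $\sigma_0=\sigma_0(N,d)>0$, so Yoccoz's lemma (Lemma~\ref{yoccozlemma}) gives $|\Lambda_j|\asymp|I_{n_k}(c_0)|\,\big[\min\{j,\ell-j\}\big]^{-2}$ with $\ell\asymp a_{n_k+1}$. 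For general $i$ the same profile $|\Lambda_j|\asymp|A|\,\big[\min\{j,\ell-j\}\big]^{-2}$ follows by transferring through $f^{i}|_{I_{n_k}(c_0)}$, whose distortion is bounded, for $n_k$ large, in terms of $N$ and $d$ alone — a standard refinement of Koebe's principle (Lemma~\ref{koebe}) via the real bounds, the long atoms of $\mathcal{P}_{n_k}(c_0)$ being pairwise disjoint. Now fix an integer $m=m(N,d)$ large enough that $|\Lambda_1|/|\Lambda_m|\ge b_0$ for some $b_0>1$, and take $B=B(N,d)\ge 2m$ so that $\min\{m,\ell-m\}=m$. Putting $J_1:=\Lambda_1$ and $J_2:=\Lambda_m$, we obtain disjoint intervals $J_1,J_2\subset A$ with $|J_1|\asymp|J_2|\asymp|A|$ (indeed $|J_2|\asymp|A|/m^2$) and $|J_1|\ge b_0|J_2|$, while $f^{(m-1)q_{n_k+1}}$ maps $J_1$ diffeomorphically onto $J_2$ with distortion bounded by Koebe's principle (its $f$-orbit up to time $\lesssim q_{n_k+1}<q_{n_k+2}$ runs through atoms of $\mathcal{P}_{n_k+1}(c_0)$ of bounded intersection multiplicity), so that $Df^{(m-1)q_{n_k+1}}\asymp|J_2|/|J_1|$ is bounded below. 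This is precisely the data of Remark~\ref{shortcrit} relative to $A$.

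It remains to see that \emph{every} atom of $\mathcal{P}_{n_k-1}(c_0)$ contains such a configuration. If $\Delta$ is a \emph{short} atom of $\mathcal{P}_{n_k-1}(c_0)$, then $\Delta$ is a \emph{long} atom of $\mathcal{P}_{n_k}(c_0)$, and the construction above with $A=\Delta$ yields $J_1,J_2\subset\Delta$. If $\Delta$ is a \emph{long} atom of $\mathcal{P}_{n_k-1}(c_0)$, then $\Delta$ contains an extreme interval $A$ of its own $\mathcal{P}_{n_k}(c_0)$-chain, with $|A|\asymp|\Delta|$ by the real bounds, and the construction with that $A$ yields $J_1,J_2\subset A\subset\Delta$, comparable to $\Delta$. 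In every case we set $\mathcal{A}^\Delta_1=\{J_1\}$, $\mathcal{A}^\Delta_2=\{J_2\}$, $\mathcal{A}^\Delta_3=\{J\in\mathcal{P}_{n_k+1}(c_0):J\subset\Delta\}\setminus\{J_1,J_2\}$; then conditions (i)--(iv) of the standing hypothesis hold (with $b_0$ as above, some $b_1$ from (ii), a universal $\theta$ since $|J_1|\asymp|\Delta|$, and (iv) trivially). The two-level-gap form of Theorem~\ref{katzcrit} then gives that $f$ admits no $\sigma$-finite absolutely continuous invariant measure, which is Theorem~\ref{nosigcrit}.

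I expect the main obstacle to be precisely the handling of \emph{short} atoms: with consecutive partition levels the standing hypothesis simply fails for them, since a short atom of $\mathcal{P}_{n_k}(c_0)$ is itself an atom of $\mathcal{P}_{n_k+1}(c_0)$ and so is not subdivided; one must recognize that a two-level gap repairs this, because short atoms of $\mathcal{P}_{n_k-1}(c_0)$ are long atoms of $\mathcal{P}_{n_k}(c_0)$, which \emph{do} get finely subdivided in $\mathcal{P}_{n_k+1}(c_0)$ once $a_{n_k+1}\ge B$. The accompanying quantitative work is routine but must be organized so that all constants other than $B$ are allowed to depend on $f$ while $B$, $m$ and the width bound $\sigma_0$ depend only on $N$ and $d$: this hinges on the real bounds furnishing $\sigma_0(N,d)>0$ and furnishing distortion control, uniform in $N,d$ for $n$ large, of the iterates $f^{i}$ and $f^{(m-1)q_{n_k+1}}$ through Koebe. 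The remaining inputs — that the relevant iterates restrict to diffeomorphisms of the chains, and that the extreme intervals of a dynamical chain are comparable to the interval they subdivide — are standard consequences of the real bounds and of Lemma~\ref{negschwarz}.
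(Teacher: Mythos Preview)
Your overall strategy is correct and very close to the paper's, but there is a genuine gap that breaks the argument whenever $N>1$. You assert that for $i=0$ the map $f^{q_{n_k+1}}$ restricted to the chain $\Lambda_0\cup\Lambda_1\cup\cdots$ inside $I_{n_k}(c_0)$ is a diffeomorphism with negative Schwarzian, and hence an almost parabolic map to which Yoccoz's lemma applies. Lemma~\ref{negschwarz} gives the negative Schwarzian \emph{at regular points}, but it does not say there are no critical points. In fact, for a multicritical map the iterate $f^{q_{n_k+1}}$ can have up to $N$ critical points in the interior of $I_{n_k}(c_0)$: whenever some other critical point $c_j$ of $f$ lies in the interior of one of the long atoms $f^{\ell}(I_{n_k}(c_0))$, the preimage $f^{-\ell}(c_j)$ sits inside $I_{n_k}(c_0)$ and is critical for $f^{q_{n_k+1}}$. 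These are the \emph{critical spots}, and their presence destroys both the direct application of Yoccoz's lemma and your Koebe argument for $f^{(m-1)q_{n_k+1}}$, since you cannot enlarge $J_1$ to a $T$ on which the iterate is a diffeomorphism if a critical spot sits nearby. The same obstruction propagates through your transfer by $f^i$ to other long atoms.

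The paper repairs this with an extra step you have omitted: among the $a_{n_k+1}$ consecutive intervals $\Delta_j$ inside $I_{n_k}(c_0)$, it passes to the longest \emph{bridge} $\Delta_{j_1}\cup\cdots\cup\Delta_{j_2}$ on which $f^{q_{n_k+1}}$ is a genuine diffeomorphism; pigeonhole gives length $\ell\ge a_{n_k+1}/(N+1)$. The crucial input making Yoccoz's lemma applicable on this bridge with a width bounded below \emph{uniformly} (depending only on $N,d$) is Lemma~\ref{lemmaiteratescritspots}, which says every critical spot is comparable to $|I_{n_k}(c_0)|$; hence so are the extreme intervals $\Delta_{j_1+1}$ and $\Delta_{j_2-1}$ of the bridge. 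This is the missing idea in your sketch. The Cross-Ratio Inequality (not merely Koebe) is then what gives space at all intermediate iterates and allows the decomposition to be spread to $f^i(I_{n_k}(c_0))$.

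Your two-level-gap device for handling short atoms is a legitimate alternative to the paper's ``the proof for the short ones being the same'' (the paper implicitly relies on Remark~\ref{shortcrit} there), but it does not rescue the core issue above.
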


In the proof of Theorem \ref{nosigcrit}, we will make extensive use of the following fact, which is an immediate consequence of \cite[Lemma 4.2, page 5600]{EdF18}.


\begin{lemma}\label{lemmaiteratescritspots} Let $c_0$ be a critical point of $f$, and let $0\leq k< a_{n+1}$ be such that the interval $f^{q_n+kq_{n+1}}\big(I_{n+1}(c_0)\big) \subset I_n(c_0)$ contains a critical point of $f^{q_{n+1}}$. Then$$\left|f^i\big(f^{q_n+kq_{n+1}}(I_{n+1}(c_0))\big)\right|\asymp \left|f^i\big(I_n(c_0)\big)\right|\quad\mbox{for all $i\in\{0,1,...,q_{n+1}\}$.}$$
\end{lemma}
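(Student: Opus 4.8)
The plan is to split off the trivial half of the estimate and then concentrate everything at a single ``critical spot''. Write $L=f^{q_n+kq_{n+1}}(I_{n+1}(c_0))$ and, for $0\le i\le q_{n+1}$, set $L^i=f^i(L)$ and $J^i=f^i(I_n(c_0))=I_n^i(c_0)$. Since $L\subset I_n(c_0)=J^0$ and every iterate of $f$ is an orientation-preserving homeomorphism, one has $L^i\subset J^i$ for all $i$, hence $|L^i|\le|J^i|$ for free. The whole content of the lemma is thus the reverse bound: a constant $K=K(N,d)>1$ with $|J^i|\le K\,|L^i|$ for $n$ large and all $i$.

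\emph{Locating the critical point.} By hypothesis there is $y\in L$ with $Df^{q_{n+1}}(y)=0$, so there are $p\in\{0,\dots,q_{n+1}-1\}$ and a critical point $c_*$ of $f$ with $c_*=f^p(y)\in L^p$; choose $p$ minimal with this property, so that $L^0,\dots,L^{p-1}$ contain no critical point of $f$. From the recursion for the $q_m$ one checks that $L^p=I_{n+1}^{q_n+kq_{n+1}+p}(c_0)$ is a long atom of $\mathcal P_{n+1}(c_0)$ (the exponent is $\le q_{n+2}-1$), while $J^p=I_n^p(c_0)$ is a long atom of $\mathcal P_n(c_0)$; both contain $c_*$.

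\emph{The crux: comparability at the critical spot.} The key step is $|J^p|\le K_0\,|L^p|$ with $K_0=K_0(N,d)$, and this is where non-flatness and the hypothesis enter essentially. In general an atom of $\mathcal P_{n+1}(c_0)$ lying inside $I_n(c_0)$ can be smaller than $|I_n(c_0)|$ by a factor of order $a_{n+1}^2$ (Yoccoz's Lemma \ref{yoccozlemma}), so extra input is needed; it is supplied by the real bounds \emph{at the base point} $c_*$ (Theorem \ref{realbounds}). These force $|I_n(c_*)|\asymp|I_{n+1}(c_*)|$, whence the whole dynamical structure of $f$ near $c_*$ has a single scale $\asymp|I_n(c_*)|$, and a standard bounded-geometry argument with these estimates then shows that any atom of $\mathcal P_n(c_0)$ or of $\mathcal P_{n+1}(c_0)$ straddling $c_*$ has length $\asymp|I_n(c_*)|$. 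Applying this to $J^p$ and to $L^p$ gives $|J^p|\asymp|I_n(c_*)|\asymp|L^p|$. I expect this to be the main obstacle; it is in essence \cite[Lemma 4.2, page 5600]{EdF18}, which is precisely why the present lemma is ``immediate'' from it.

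\emph{Propagation to all $i$.} It remains to transport the estimate $|L^p|\asymp|J^p|$ to every $i\in\{0,\dots,q_{n+1}\}$. For $i\le p$ one uses $f^{p-i}$, which restricts to a diffeomorphism of $L^i$ onto $L^p$ (by minimality of $p$) and carries $J^i$ onto $J^p$; for $p\le i\le q_{n+1}-1$ one uses $f^{i-p}$, pushing $L^p\subset J^p$ forward. In either case one enlarges $J^i$ to a comparable interval $\widehat J^i\supseteq J^i$, a union of boundedly many atoms of $\mathcal P_n(c_0)$ and $\mathcal P_{n+1}(c_0)$, whose image under the relevant iterate is comparable to $|J^p|\asymp|L^p|$ and hence contains a definite scaled neighbourhood of the image of $L^i$; since the long atoms $J^j$ of $\mathcal P_n(c_0)$ (likewise the long atoms of $\mathcal P_{n+1}(c_0)$) with indices in the relevant range are pairwise disjoint, the iterates of $\widehat J^i$ have multiplicity of intersection bounded in terms of $N$ and $d$. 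Koebe's distortion principle (Lemma \ref{koebe}), the cross-ratio inequality, and the negative-Schwarzian Lemma \ref{negschwarz} (which keeps cross-ratios from being contracted along the diffeomorphic stretches, the finitely many non-flat critical crossings being handled by the usual local estimate for $f$ near a non-flat critical point) then yield $|L^i|\asymp|\widehat J^i|\asymp|J^i|$. Finally the endpoint $i=q_{n+1}$ follows from the case $i=q_{n+1}-1$ by one further application of $f$. Combining these steps proves the lemma.
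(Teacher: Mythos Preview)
Your approach is correct in outline and shares the paper's core ingredient (comparability at a ``critical spot'', which is \cite[Lemma~4.2]{EdF18}), but the propagation step is considerably more involved than the paper's. The paper anchors the estimate at the two endpoints $i=0$ and $i=q_{n+1}$ rather than at an intermediate critical time $p$. The case $i=0$ is exactly \cite[Lemma~4.2]{EdF18}. The case $i=q_{n+1}$ follows from a clean observation you did not use: $f^{q_{n+1}}(L)=f^{q_n+(k+1)q_{n+1}}\big(I_{n+1}(c_0)\big)$ is the atom of $\mathcal P_{n+1}(c_0)$ \emph{adjacent} to $L$, so $|L^{q_{n+1}}|\asymp|L|\asymp|I_n(c_0)|$ directly by the real bounds (Theorem~\ref{realbounds}), and likewise $|J^{q_{n+1}}|\asymp|I_n(c_0)|$. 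With both endpoints in hand, the paper handles the intermediate $i$ by a single one-directional application of the Cross-Ratio Inequality to $f^{q_{n+1}-i}$ --- no Koebe, no negative-Schwarzian input, no separate bookkeeping for critical crossings.

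Your anchoring at $i=p$ forces a two-sided propagation and makes you contend with critical points that may lie in $J^j\setminus L^j$ along the way (minimality of $p$ only clears them out of $L^j$, not out of $J^j$ or $\widehat J^j$); this is why you need the additional machinery. Your final sentence (``$i=q_{n+1}$ follows from $i=q_{n+1}-1$ by one further application of $f$'') is also delicate for the same reason --- a critical point of $f$ in $J^{q_{n+1}-1}\setminus L^{q_{n+1}-1}$ would spoil a naive one-step comparison --- whereas the adjacency trick above settles that case for free.
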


\begin{proof}
We only sketch the proof. 
For $i=0$ the statement is just \cite[Lemma 4.2, page 5600]{EdF18}. Moreover, by Theorem \ref{realbounds}, the image of each critical spot under $f^{q_{n+1}}$ is also comparable to $I_n(c_0)$: this is simply because $f^{q_{n+1}}\big(f^{q_n+kq_{n+1}}(I_{n+1}(c_0))\big)=f^{q_n+(k+1)q_{n+1}}\big(I_{n+1}(c_0)\big)$ is adjacent to $f^{q_n+kq_{n+1}}\big(I_{n+1}(c_0)\big)$ in $\mathcal{P}_{n+1}(c_0)$. So the statement of our lemma also holds for $i=q_{n+1}$. Now, for each $i\in\{1,...,q_{n+1}-1\}$ consider the iterate $f^{q_{n+1}-i}$, and apply the Cross-Ratio Inequality from Section \ref{secprel}. For more details, see \cite[Section 4.4, page 5602]{EdF18}. 
\end{proof}

Following the terminology of \cite{EdF18}, an interval such as $f^{q_n+kq_{n+1}}\big(I_{n+1}(c_0)\big)$ appearing in the statement above, containing some critical point of $f^{q_{n+1}}$, is called a {\it critical spot\/}. Thus, Lemma \ref{lemmaiteratescritspots} is saying that every critical spot is large, {\it i.e.,} is comparable to the atom of $\mathcal{P}_n(c_0)$ in which it is contained, and the same happens to all its images up to time $i=q_{n+1}$.


\begin{proof}[Proof of Theorem \ref{nosigcrit}] By Theorem \ref{katzcrit}, it suffices to show that an $f$ as in the statement satisfies the {\it standing hypothesis\/} previously formulated, provided $\limsup{a_n}$ is sufficiently large. This will be proved with the help of the real bounds (Theorem \ref{realbounds}), Yoccoz's inequality (Lemma \ref{yoccozlemma}) and Lemma \ref{lemmaiteratescritspots} above.
 
Let $c_0$ be a critical point of $f$ and consider the associated dynamical partitions $\mathcal{P}_n(c_0)$ for $n\geq n_0(f)$, where $n_0(f)$ is as in Theorem \ref{realbounds}. We are also assuming that such $n$ is large enough that the iterates $f^{q_n}$ and $f^{q_{n+1}}$ have negative Schwarzian derivative at all points in $I_{n+1}(c_0)$ ($I_{n}(c_0)$ respectively) where their derivatives do not vanish (this is possible by Lemma \ref{negschwarz}). We will only consider in the proof long atoms of $\mathcal{P}_n(c_0)$, the proof for the short ones being the same. Moreover, we will decompose first the collection $\big\{J\in \mathcal{P}_{n+1}(c_0):\;J\subset I_n(c_0)\big\}$, and then we will spread this decomposition iterating by $f$. So let $\Delta=I_n(c_0)$, and consider the following consecutive atoms of $\mathcal{P}_{n+1}(c_0)$ inside $\Delta$: $\Delta_0=f^{q_n}(I_{n+1})$ and $\Delta_j=f^{jq_{n+1}}(\Delta_0)$ for $j=1,2,\ldots, a_{n+1}-1$; note that $\Delta_j=f^{q_{n+1}}(\Delta_{j-1})$ for all $1\leq j\leq a_{n+1}-1$. Some of these intervals may be critical spots (which are always comparable in size with $|\Delta|$, by Lemma \ref{lemmaiteratescritspots}). We look at the {\it bridges\/} between such critical spots, and pick the longest one. More precisely, let $0\leq j_1 \leq j_2\leq a_{n+1}-1$ with $j_2-j_1$ maximal with the property that $\phi=f^{q_{n+1}}|_{\Delta_{j_1}\cup\cdots\cup\Delta_{j_2}}$ is a diffeomorphism onto its image. Let $T_n=\Delta_{j_1}\cup\cdots\cup\Delta_{j_2}$, $R_n=\Delta_{j_1}$, $L_n=\Delta_{j_2}$ and $M_n=T_n\setminus(L_n \cup R_n)=\Delta_{j_1+1}\cup\cdots\cup\Delta_{j_2-1}$. Note that $\phi|_{M_n}$ is an {\it almost parabolic map\/} (see Definition \ref{def:apm}) with length $\ell=j_2-j_1-1$, and note that $\ell \geq a_{n+1}/(N+1)$, where $N$ is the number of critical points of $f$. Let us write $J_1=\Delta_{j_1+1}\,,\,J_2=\Delta_{j_1+2}\,,\,\ldots\,,\,J_{\ell}=\Delta_{j_1+\ell}=\Delta_{j_2-1}\/$. From the real bounds (Theorem \ref{realbounds}), we have $|J_1|\asymp |\Delta|\asymp |J_\ell|$, with {\it beau\/} comparability constants. Therefore, by Yoccoz's inequality (Lemma \ref{yoccozlemma}), there exists a constant $C_0>1$, depending only on $f$, such that, for all $1\leq j\leq \ell$,  
 \begin{equation}\label{nosigcrit1}
  \frac{C_0^{-1}}{\min\{j\,,\,\ell-j\}^2} \;\leq\; \frac{|J_j|}{|\Delta|}
  \leq \frac{C_0}{\min\{j\,,\,\ell-j\}^2} 
\end{equation}
Now we claim that there exists a constant $\tau>0$ (depending only on $f$) such that$$\big|f^i(L_n)\big|>\tau\,\big|f^i(M_n)\big|\quad\mbox{and}\quad\big|f^i(R_n)\big|>\tau\,\big|f^i(M_n)\big|$$for all $i\in\{ 0, \cdots, q_{n+1}\}$. Indeed, again by combining Theorem \ref{realbounds} with Lemma \ref{lemmaiteratescritspots} we obtain the claim for both $i=0$ and $i=q_{n+1}$. By the Cross-Ratio Inequality (note that the intervals $T_n,f(T_n),...,f^{q_{n+1}-1}(T_n)$ are pairwise disjoint), we deduce the claim for any $i\in\{1, \cdots, q_{n+1}-1\}$. With this at hand, and since $f^i|_{T_n}$ is a diffeomorphism for any $i\in\{0,...,q_{n+1}\}$, we can apply Koebe distortion principle (Lemma \ref{koebe}) in order to obtain a constant $K=K(f)>1$ such that $f^i|_{M_n}$ has distortion bounded by $K$ for each $i\in\{0, \cdots, q_{n+1}\}$. Let us now define $B=2(N+1)\lceil\sqrt{2K}C_0\rceil+1$. We are assuming from now on that $n$ is one of infinitely many natural numbers such that $a_{n+1}\geq B$. Let $m$ be the smallest natural number such that $KC_0^2m^{-2}\leq \frac{1}{2}$; in other words, let $m=\lceil\sqrt{2K}C_0\rceil$. Since $a_{n+1}\geq B$, we have 
\[
 \frac{\ell}{2}\;\geq\;\frac{a_{n+1}}{2(N+1)}\;\geq\;\frac{B}{2(N+1)}\;>\;\lceil\sqrt{2K}C_0\rceil = m\ .
\]
Thus, setting $J'=J_1$ and $J''=\phi^{m-1}(J')=J_m$, it follows from \eqref{nosigcrit1} that 
\begin{equation}\label{nosigcrit2}
 \frac{1}{C_0^2m^2} \;\leq\; \frac{|J''|}{|J'|}\;\leq\; \frac{C_0^{2}}{m^2}\;\leq\; \frac{1}{2K} \;<\; \frac{1}{2} \ .
\end{equation}
We are now ready to define the desired decomposition of $\mathcal{A}^{\Delta}$, the collection of all atoms of $\mathcal{P}_{n+1}(c_0)$ that are contained in $\Delta=I_n(c_0)$. Let $\mathcal{A}^{\Delta}_1 =\{J'\}$, let $\mathcal{A}^{\Delta}_2=\{J''\}$ and let 
$\mathcal{A}^{\Delta}_3=\mathcal{A}^{\Delta}\setminus (\mathcal{A}^{\Delta}_1 \cup 
\mathcal{A}^{\Delta}_2)$. We claim that this decomposition satisfies all conditions (i)-(iv) in the standing hypothesis. From \eqref{nosigcrit2}, we have $|J'|\geq 2|J''|$, so (i) is satisfied with $b_0=2$. By the mean value theorem, there exists $\xi\in J'$ such that 
\[
 D\phi^{m-1}(\xi) \;=\; \frac{|J''|}{|J'|}\;\geq\; \frac{1}{C_0^2m^2}\ ,
\]
where we have again used \eqref{nosigcrit2}. By Koebe distortion principle, there exists $C_1>1$ (depending only on $f$) such that 
\[
 C_1^{-1}\;\leq\; \frac{D\phi^{m-1}(x)}{D\phi^{m-1}(\xi)}\;\leq\; C_1\ ,\ \ \ \textrm{for all}\ x\in J'\ . 
\]
Combining these facts we deduce that $D\phi^{m-1}(x)\geq (C_0^2C_1 m^2)^{-1}$, and so (ii) is certainly satisfied if we take $k=q_{n+1}(m-1)$ and $b_1=KC_0^2C_1m^2=KC_0^2C_1\lceil\sqrt{2K}C_0\rceil^2$. Note that 
$b_1>2=b_0$. For $\Omega=J'\cup J''$, we now have, using \eqref{nosigcrit1}, the simple bound 
$
\lambda(\Omega)=|J'|+|J''|\geq |J'|\geq C_0^{-1}|\Delta| \ .
$
This shows that (iii) is satisfied if we choose $\theta=C_0^{-1}<1$.  
Finally, condition (iv) is trivially satisfied because both $\mathcal{A}^{\Delta}_1$ and $\mathcal{A}^{\Delta}_2$ have a single element. 

Now we spread the previous decomposition along the whole family of long intervals of $\mathcal{P}_n(c_0)$. More precisely, for each $i\in\{1,...,q_{n+1}-1\}$ we define a decomposition of $\mathcal{A}^{\Delta}$, the collection of all atoms of $\mathcal{P}_{n+1}(c_0)$ that are contained in $\Delta=f^i\big(I_n(c_0)\big)$, as follows: let $\mathcal{A}^{\Delta}_1 =\{f^i(J')\}$, let $\mathcal{A}^{\Delta}_2=\{f^i(J'')\}$ and let 
$\mathcal{A}^{\Delta}_3=\mathcal{A}^{\Delta}\setminus (\mathcal{A}^{\Delta}_1 \cup 
\mathcal{A}^{\Delta}_2)$. Again, we claim that this decomposition satisfies all conditions (i)-(iv) in the standing hypothesis. Indeed, for each $i\in\{1,...,q_{n+1}-1\}$ let $x_i' \in J'$ and $x_i'' \in J''$ be given by the mean value theorem:$$\frac{\big|f^{i}(J'')\big|}{\big|f^{i}(J')\big|}=\frac{Df^i(x_i'')}{Df^i(x_i')}\,\frac{|J''|}{|J'|}\,.$$By bounded distortion and \eqref{nosigcrit2} we obtain$$\frac{\big|f^{i}(J'')\big|}{\big|f^{i}(J')\big|}=\frac{Df^i(x_i'')}{Df^i(x_i')}\,\frac{|J''|}{|J'|} \leq K\,\frac{|J''|}{|J'|} \leq \frac{K\,C_0^2}{m^2} \leq \frac{1}{2}\,.$$So (i) is again satisfied with $b_0=2$. Now if we conjugate $\phi^{m-1}:J' \to J''$ with the iterate $f^i$, we obtain a diffeomorphism $f^i \circ \phi^{m-1} \circ f^{-i}:f^{i}(J') \to f^{i}(J'')$ which satisfies the following for all $x \in f^i(J')$:
\begin{align*}
D\big(f^i \circ \phi^{m-1} \circ f^{-i}\big)(x)&=D\phi^{m-1}\big(f^{-i}(x)\big)\,Df^i\big(\phi^{m-1} \circ f^{-i}(x)\big)\,Df^{-i}(x)=\\
&=D\phi^{m-1}\big(f^{-i}(x)\big)\,\frac{Df^i\big(\phi^{m-1} \circ f^{-i}(x)\big)}{Df^i\big(f^{-i}(x)\big)}\,.
\end{align*}
Since $f^{-i}(x)$ belongs to $J'$, $\phi^{m-1}\big(f^{-i}(x)\big)$ belongs to $J''$ and then$$D\big(f^i \circ \phi^{m-1} \circ f^{-i}\big)(x)\geq\frac{1}{K}\,D\phi^{m-1}\big(f^{-i}(x)\big)\geq\frac{1}{K}\,(C_0^2C_1 m^2)^{-1}\,.$$Therefore, just as before, (ii) is again satisfied with $k=q_{n+1}(m-1)$ and $b_1=KC_0^2C_1m^2=KC_0^2C_1\lceil\sqrt{2K}C_0\rceil^2$. By Lemma \ref{lemmaiteratescritspots}, the $i$-th iterate of a critical spot, contained in $I_n(c_0)$, is comparable to $f^{i}\big(I_{n}(c_0)\big)$ for all $i\in\{0,1,...,q_{n+1}\}$ and then, by Theorem \ref{realbounds}, the interval $f^{i}(J')$ is comparable to $f^{i}\big(I_n(c_0)\big)$ as well, which implies (iii). Again, condition (iv) is trivially satisfied. Summarizing, we have shown that, for infinitely many values of $n$, the partitions $\mathcal{P}_n(c_0)$ satisfy conditions (i) through (iv) of the standing hypothesis. Therefore, by Theorem \ref{katzcrit}, $f$ does not admit a $\sigma$-finite invariant measure equivalent to Lebesgue measure. This finishes the proof. 
\end{proof}

\subsection{Second step}\label{secsecstep} We now move to the bounded type case. Here our goal will be to prove the following result.

\begin{theorem}\label{nosigbound} If $f$ is a multicritical circle map with an irrational rotation number of bounded type, then $f$ does not admit an invariant $\sigma$-finite measure which is absolutely continuous with respect to Lebesgue measure.
\end{theorem}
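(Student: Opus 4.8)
The plan is to deduce Theorem~\ref{nosigbound} from the Katznelson criterion (Theorem~\ref{katzcrit}) in the same way Theorem~\ref{nosigcrit} was deduced, by verifying the standing hypothesis along a sequence of good levels $n_k\to\infty$. Following Remark~\ref{shortcrit}, what one really needs is: for infinitely many $n$ and for \emph{every} atom $\Delta$ of $\mathcal P_n(c_0)$, two disjoint intervals $J',J''\subset\Delta$, both comparable with $|\Delta|$, with $|J'|\geq b_0|J''|$ for a uniform $b_0>1$, and an iterate $f^k$ mapping $J'$ diffeomorphically onto $J''$ with uniformly bounded distortion. (Then (i) holds with that $b_0$; (ii) follows because bounded distortion together with $|J'|\asymp|J''|$ forces $Df^k$ and $Df^{-k}$ to be bounded above and below; (iii) holds because $J'\cup J''$ is comparable with $\Delta$; and (iv) is trivial since both collections are singletons.) Having produced such $J',J''$ inside $\Delta=I_n(c_0)$ and inside the short atom $I_{n+1}(c_0)$, one spreads the configuration along the whole partition $\mathcal P_n(c_0)$ by pushing forward with $f^i$, $0\le i<q_{n+1}$, controlling the distortion with the real bounds (Theorem~\ref{realbounds}), Lemma~\ref{lemmaiteratescritspots} and the Cross-Ratio Inequality, exactly as in the proof of Theorem~\ref{nosigcrit}.

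So everything reduces to locating, inside a good-level atom, a pair $J',J''$ with a \emph{definite} size gap joined by a bounded-distortion iterate. In the bounded type regime all partial quotients are bounded, say by some $M$, so each atom of $\mathcal P_n(c_0)$ contains at most $M+1$ atoms of $\mathcal P_{n+1}(c_0)$, and the real bounds propagate to show these are all comparable with the parent atom, with uniform constants: the map has \emph{bounded geometry} at every scale. Hence the comparability requirements are automatic, and the whole difficulty is the inequality $|J'|\geq b_0|J''|$: one must exhibit a spot inside $\Delta$ where the geometry is non-uniform by a definite amount. Such non-uniformity can only come from the critical points, through the almost parabolic nature of the return dynamics near a critical point --- which is precisely what makes Yoccoz's lemma (Lemma~\ref{yoccozlemma}) produce atom sizes that decay quadratically, hence with a definite ratio between the first and the $m$-th link of a sufficiently long cascade, as in the proof of Theorem~\ref{nosigcrit}. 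The difference from the unbounded case is that a single-level cascade $f^{q_{n+1}}|_{I_n(c_0)}$ now has length at most $M$, which may be below the (universal) threshold required for the quadratic decay to beat the Yoccoz constant.

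To get past this I would build a longer almost parabolic map by iterating a first return map: for infinitely many $n$, concatenate the bounded-length bridges coming from a bounded number of consecutive renormalization levels into a single diffeomorphic cascade $\phi$ --- one of the branches of a high iterate of the first return map to $I_n(c_0)$ --- of length exceeding the required threshold, with $\phi$ still contained in one atom and of width bounded below in terms of the real bounds. The essential analytic input is that these iterated first return maps still have \emph{negative Schwarzian derivative}: Lemma~\ref{negschwarz} controls $Sf^{j}$ for $j\le q_{n+1}$ (resp. $j\le q_n$) on $I_n(c_0)$ (resp. $I_{n+1}(c_0)$), but not a composition mixing the two branches $f^{q_n}$ and $f^{q_{n+1}}$ of a first return map, and it is exactly this negative Schwarzian property of first return maps that is established in Appendix~\ref{app}. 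With it, $\phi$ is a genuine almost parabolic map in the sense of Definition~\ref{def:apm}, Yoccoz's lemma applies, and taking $J'=J_1$, $J''=J_m$ for a fixed $m$ depending only on the uniform Koebe and Yoccoz constants yields $|J'|\geq b_0|J''|$. Since $J_1,\dots,J_m$ lie on a single diffeomorphic branch of $\phi$ whose total image-length sum is small and which has definite space on both sides (from the negative Schwarzian and the real bounds), the Koebe distortion principle (Lemma~\ref{koebe}) bounds the distortion of the iterate $f^k$ carrying $J'$ onto $J''$. Feeding this into Theorem~\ref{katzcrit} completes the proof.

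The step I expect to be the main obstacle is the construction and analysis of this long cascade in bounded type: one must check that concatenating return maps across several renormalization levels produces a single diffeomorphic almost parabolic map whose length exceeds the universal threshold while remaining inside one atom with controlled width, and --- above all --- that the corresponding mixed composition of $f^{q_n}$ and $f^{q_{n+1}}$ has negative Schwarzian everywhere on its domain. The latter is the delicate point deferred to Appendix~\ref{app}; granting it, the remaining ingredients (bounded geometry from the real bounds, Yoccoz's lemma, Koebe control, and spreading along $\mathcal P_n(c_0)$) are routine adaptations of the arguments already carried out for Theorem~\ref{nosigcrit}.
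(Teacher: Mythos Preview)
Your reduction to Katznelson's criterion and the spreading argument are right, but the mechanism you propose for producing the definite size gap $|J'|\geq b_0|J''|$ is not the paper's and, as written, does not work. An almost parabolic cascade of length $\ell$ for a single iterate $\phi=f^{q}$ requires consecutive fundamental domains $J_1,\dots,J_\ell$ with $\phi(J_k)=J_{k+1}$; in bounded type no single return time $q$ produces more than $a_{n+1}\le M$ such intervals, and ``concatenating bridges across renormalization levels'' does not yield a \emph{single} map $\phi$ shifting a long chain of consecutive intervals one step each, so Definition~\ref{def:apm} and Lemma~\ref{yoccozlemma} simply do not apply to the object you describe. You also misread Appendix~\ref{app}: Proposition~\ref{propnegschw} does not treat mixed words in $f^{q_n}$ and $f^{q_{n+1}}$; it extends the negative Schwarzian of $f^{q_n}$ and $f^{q_{n+1}}$ from the critical-point intervals $I_n(c_0),I_{n+1}(c_0)$ to \emph{all} of $S^1$ (equivalently, to $I_n(x_0)$ for arbitrary $x_0$), which is what is needed because the construction takes place around the regular points $c_{-i}=f^{-i}(c)$.

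The paper's size gap comes from a completely different source: the power-law criticality of $f^{q_n}$ at $c_{-i}$. One goes $k$ levels deeper, takes $I=I_{n+k+1}(c_{-i})\subset I_{n+1}(c_{-i})$ and $J=f^{q_n}(I)\subset I_n(c_{-i})$; since $f^{q_n}$ behaves like $x\mapsto x^{d}$ near $c_{-i}$ with $d>1$, one has $|J|/|I_n(c_{-i})|\asymp\big(|I|/|I_n(c_{-i})|\big)^d$, whence $|J|/|I|\asymp (|I|/|I_n(c_{-i})|)^{d-1}$. The bounded-type input is Proposition~\ref{expdecay} (uniform exponential comparability of nested atoms at \emph{every} point), which makes $|I|/|I_n(c_{-i})|\leq C_0\lambda_1^k$; choosing $k$ fixed but large enough forces $|J|\leq\frac12|I|$. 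One then uses pigeonhole to find three consecutive atoms $L,M,R$ of $\mathcal P_{n+k+p}(c_{-i})$ inside $I$ avoiding the (at most $N$) critical points of $f^{q_n}|_I$, and applies Koebe's nonlinearity principle (via the negative Schwarzian from Proposition~\ref{propnegschw}) to get bounded distortion of $f^{q_n}$ on $M$; setting $\Delta'=M$, $\Delta''=f^{q_n}(M)$ gives Lemma~\ref{cruciallemma}. Transporting this pair into an arbitrary atom of $\mathcal P_n(c)$ is done via the Cross-Ratio Inequality and the real bounds, much as you described. Yoccoz's lemma plays no role in this half of the argument.
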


In the proof of Theorem \ref{nosigbound} we will make use of the following two auxiliary results.

\begin{prop}\label{expdecay} Given a multicritical circle map $f$ with an irrational rotation number of bounded type, there exist constants $C_0>1$ and  $0<\lambda_0<\lambda_1<1$ with the following property. For each $x\in S^1$, each $n,k\geq 0$ and every pair of atoms $I\in \mathcal{P}_n(x)$ and $J\in \mathcal{P}_{n+k}(x)$ with $J\subseteq I$, we have
 \[
  C_0^{-1}\lambda_0^k\;\leq\; \frac{|J|}{|I|} \;\leq\; C_0\lambda_1^k \ .
 \]
\end{prop}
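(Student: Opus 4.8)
The plan is to prove Proposition~\ref{expdecay} by showing that within a bounded number of levels the ratio $|J|/|I|$ drops by a definite factor, and then iterating. More precisely, the key claim will be that there are an integer $p\geq 1$ and constants $0<\kappa_0<\kappa_1<1$, all depending only on $f$, such that whenever $I\in\mathcal{P}_n(x)$ and $J\in\mathcal{P}_{n+p}(x)$ with $J\subseteq I$, one has $\kappa_0\leq |J|/|I|\leq \kappa_1$. Granting this, for general $k$ write $k=ap+r$ with $0\leq r<p$, interpolate through the chain of atoms $J=J_{n+k}\subseteq J_{n+k-p}\subseteq\cdots\subseteq J_{n+r}\subseteq I$ (each $J_{m}$ the atom of $\mathcal{P}_m(x)$ containing $J$), apply the per-block estimate $a=\lfloor k/p\rfloor$ times, and absorb the at most $p$ leftover levels into the multiplicative constant $C_0$, using that each single-level ratio $|J_{m+1}|/|J_{m}|$ is trapped in an interval $[\delta,1]$ with $\delta=\delta(f)>0$ (every atom of $\mathcal{P}_{m+1}(x)$ inside an atom of $\mathcal{P}_m(x)$ is comparable to it by the real bounds, Theorem~\ref{realbounds}, once $m\geq n_0(f)$; the finitely many initial levels are harmless). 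Setting $\lambda_0=\kappa_0^{1/p}$ and $\lambda_1=\kappa_1^{1/p}$ and enlarging $C_0$ appropriately yields the stated two-sided bound.

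The substance is therefore the per-block claim, and this is where bounded type is used. The upper bound $|J|/|I|\leq\kappa_1<1$: it suffices that in going from level $n$ to level $n+1$ (and hence through $p$ such steps for a suitable fixed $p$ that covers the $a_{n+1}$ subdivisions), an atom is properly subdivided into at least two pieces each comparable to the whole, so that no child can exceed a fraction $1-c$ of its parent for some $c=c(f)>0$; since the rotation number has bounded type, $a_n\leq A$ for all $n$, so passing from $\mathcal{P}_n(x)$ to $\mathcal{P}_{n+1}(x)$ involves subdividing each long atom into $a_{n+1}+1\leq A+1$ pieces plus the standard refinement of the short atoms, and by the real bounds all these pieces are mutually comparable with a uniform constant $C(N,d)$. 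Hence each child of an atom of $\mathcal{P}_n(x)$ has length at most $\frac{C}{C+1}$ times its parent (comparing it to at least one sibling), giving $\kappa_1=\frac{C}{C+1}<1$ already with $p=1$. The lower bound $|J|/|I|\geq\kappa_0>0$ with $p=1$ is immediate from the same real bounds: a child atom is comparable to its parent, so $|J|/|I|\geq C^{-1}$. Thus in fact one can take $p=1$, $\kappa_0=C^{-1}$, $\kappa_1=C/(C+1)$, and the block argument above degenerates to a clean induction on $k$.

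I would organize the write-up as: (1) fix $n_0=n_0(f)$ from Theorem~\ref{realbounds} and first treat the case $n\geq n_0$; (2) establish the single-level two-sided bound $C^{-1}\leq|J_{m+1}|/|J_{m}|\leq C/(C+1)$ for $m\geq n_0$ using adjacency and the real bounds (here boundedness of $a_n$ guarantees the number of siblings, hence the comparability constant, is uniform); (3) multiply these $k$ inequalities to get $C^{-k}\leq|J|/|I|\leq (C/(C+1))^{k}$, which is the desired estimate with $\lambda_0=C^{-1}$, $\lambda_1=C/(C+1)$ and $C_0=1$; (4) handle the finitely many levels $n<n_0$ by noting that $|J|/|I|$ over any such initial segment is bounded below and above by positive constants depending only on $f$ (there are finitely many atoms and finitely many levels below $n_0$, each ratio positive), and fold this into an enlarged $C_0$. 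The main obstacle — really the only non-formal point — is verifying that bounded type makes the real-bounds comparability constant genuinely uniform across the subdivision from $\mathcal{P}_n(x)$ to $\mathcal{P}_{n+1}(x)$, i.e.\ that the constant $C$ in step~(2) does not secretly depend on $a_{n+1}$; this is exactly what Theorem~\ref{realbounds} provides (the constant there depends only on $N$ and $d$, and the adjacency structure of dynamical partitions is combinatorially the same at every level), so once that is spelled out the argument is routine.
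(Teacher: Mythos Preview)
Your overall strategy --- establish a one-step (or bounded-step) two-sided ratio bound using the real bounds and then telescope --- is exactly what the paper does, and the telescoping part is fine. But there is a genuine gap in the input you feed into it.

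You repeatedly invoke Theorem~\ref{realbounds} for the dynamical partitions $\mathcal{P}_n(x)$ at an \emph{arbitrary} base point $x$. Theorem~\ref{realbounds} is stated and proved only for $\mathcal{P}_n(c)$ with $c$ a \emph{critical point} of $f$. The extension to all $x\in S^1$ is not automatic: for unbounded combinatorics it is in fact false (the set of points with bounded geometry can be meagre and of measure zero; see the discussion around Theorem~\ref{teoAmagro}). The passage from ``real bounds at critical points'' to ``real bounds at every point'' is precisely where the bounded-type hypothesis enters, and it is the actual content behind Proposition~\ref{expdecay}. In the paper this is Theorem~\ref{realboundsBT}, proved via Lemma~\ref{intersectcomp} (any atom of $\mathcal{P}_n(x_0)$ that meets an atom of $\mathcal{P}_n(c)$ is comparable to it). Your closing paragraph identifies the wrong obstacle: the constant in Theorem~\ref{realbounds} is indeed independent of $a_{n+1}$, but the theorem simply does not apply to $\mathcal{P}_n(x)$ for regular $x$, and that is what must be supplied.

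One smaller point: your claim that $p=1$ works is not quite right for the upper bound. A \emph{short} atom $I_{n+1}^j\in\mathcal{P}_n(x)$ is not subdivided when passing to $\mathcal{P}_{n+1}(x)$ (it becomes a long atom there), so its unique child has the same length and your inequality $|J|/|I|\leq C/(C+1)$ fails at that step. This is harmless --- take $p=2$, since every atom is genuinely subdivided after two levels --- but it should be fixed.
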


\begin{prop}\label{propnegschw} Given a multicritical circle map $f$ with an irrational rotation number of bounded type, there exists $n_0=n_0(f)\in\nt$ such that for all $n \geq n_0$ we have$$Sf^{q_{n+1}}(x)<0\quad\text{for all $x \in S^1$ regular point of $f^{q_{n+1}}$.}$$Likewise, we have$$Sf^{q_{n}}(x)<0\quad\text{for all $x \in S^1$ regular point of $f^{q_{n}}$}.$$
\end{prop}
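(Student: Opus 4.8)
The plan is to read off the sign of $Sf^{q_{n+1}}$ from the cocycle identity for the Schwarzian derivative,
\[
Sf^{m}(x)\;=\;\sum_{j=0}^{m-1}Sf\big(f^{j}(x)\big)\,\big(Df^{j}(x)\big)^{2}\,,
\]
together with the fact that, by the non-flatness hypothesis, $Sf$ is extremely negative near $\crit(f)$ and merely bounded away from it. Indeed, writing $f$ near a critical point $c$ of criticality $d$ as $f=g\circ\phi$ with $\phi$ a $C^{3}$ diffeomorphism, $\phi(c)=0$, and $g(u)=f(c)+u|u|^{d-1}$, one computes $Sg(u)=-(d^{2}-1)/(2u^{2})$, so that the chain rule for $S$ yields constants $\delta_{0},\kappa,M>0$, depending only on $f$, with $Sf(y)\le-\kappa\,\operatorname{dist}(y,\crit(f))^{-2}$ whenever $\operatorname{dist}(y,\crit(f))<\delta_{0}$, and $|Sf(y)|\le M$ otherwise. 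Since $q_{n}=q_{(n-1)+1}$, it is enough to treat $Sf^{q_{n+1}}$; the statement for $Sf^{q_{n}}$ then follows with $n$ replaced by $n-1$.

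Fix a critical point $c_{0}$ and work with the dynamical partitions $\mathcal{P}_{n}(c_{0})$. Here is where bounded type enters: by the real bounds (Theorem \ref{realbounds}), Proposition \ref{expdecay}, and the combinatorics of the dynamical partitions, for $n$ large the orbit segment $\{f^{j}(x):0\le j<q_{n+1}\}$ of any $x\in S^{1}$ comes within $C_{1}q_{n+1}^{-1}$ of $\crit(f)$, for a uniform $C_{1}>0$. Let $j_{0}=j_{0}(x,n)$ be a time of closest approach, so $\epsilon_{0}:=\operatorname{dist}(f^{j_{0}}(x),\crit(f))\le C_{1}q_{n+1}^{-1}<\delta_{0}$ for $n$ large, and $\operatorname{dist}(f^{l}(x),\crit(f))\ge\epsilon_{0}$ for all $0\le l<q_{n+1}$. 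Splitting the cocycle sum into the single term $j=j_{0}$, the terms $j\ne j_{0}$ with $f^{j}(x)$ in the $\delta_{0}$-neighbourhood of $\crit(f)$ (all of which are $\le0$), and the remaining terms (where $|Sf|\le M$), we get
\[
Sf^{q_{n+1}}(x)\;\le\;-\,\kappa\,\epsilon_{0}^{-2}\,\big(Df^{j_{0}}(x)\big)^{2}\;+\;M\sum_{j=0}^{q_{n+1}-1}\big(Df^{j}(x)\big)^{2}\,.
\]
It therefore suffices to prove, uniformly in $x$ (regular for $f^{q_{n+1}}$) and $n$, the derivative estimate $q_{n+1}^{2}\,(Df^{j_{0}}(x))^{2}\ge c_{3}\sum_{j=0}^{q_{n+1}-1}(Df^{j}(x))^{2}$ for some $c_{3}>0$: combined with $\epsilon_{0}\le C_{1}q_{n+1}^{-1}$, this makes the right-hand side of the displayed inequality at most $(-\kappa c_{3}C_{1}^{-2}+M)\sum_{j}(Df^{j}(x))^{2}$ up to lower-order terms, which is negative once $q_{n+1}$ is large enough, i.e. for all $n\ge n_{0}=n_{0}(f)$.

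The step I expect to be the main obstacle is exactly this uniform control of the derivatives $Df^{j}(x)$ along the orbit, in the regime where the orbit runs close to $\crit(f)$ and, worse, close to its forward orbit, where atoms of $\mathcal{P}_{n}(c_{0})$ can collapse by a power. The idea is to work on a neighbourhood $T\ni x$ of size comparable to $\epsilon_{0}$ — more generally, on suitable dynamical intervals — chosen so that the relevant forward iterates $f^{j}|_{T}$ are diffeomorphisms with bounded distortion; then $Df^{j}(x)\asymp|f^{j}(T)|/|T|$, the intervals $f^{j}(T)$ are essentially disjoint so that $\sum_{j}|f^{j}(T)|\lesssim1$, and the estimate above reduces to comparing $|f^{j_{0}}(T)|$ with $|T|$ and with $\max_{j}|f^{j}(T)|$. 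The necessary distortion and comparability statements would be supplied by Koebe's principle (Lemma \ref{koebe}) and the Cross-Ratio Inequality, the collapse near critical points being controlled through the geometry of critical spots (Lemma \ref{lemmaiteratescritspots}) and the negative-Schwarzian property of Lemma \ref{negschwarz}. Finally, as a structural simplification I would note that the recursion $q_{n+1}=a_{n}q_{n}+q_{n-1}$ gives the factorisation $f^{q_{n+1}}=f^{q_{n-1}}\circ(f^{q_{n}})^{a_{n}}$, and since having negative Schwarzian derivative is preserved under composition, the full statement follows by induction on $n$ once it is known for two consecutive values $n_{0}-1$ and $n_{0}$ — so only those two base cases need the argument above.
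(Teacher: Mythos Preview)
Your overall strategy --- expand $Sf^{q_{n+1}}(x)$ via the chain rule and show that the singular negative contribution from a pass near $\crit(f)$ overwhelms the bounded contributions elsewhere --- is exactly the paper's strategy. But the proposal is not a proof: you explicitly flag the uniform derivative control as ``the main obstacle'' and only gesture at how to handle it. That obstacle is where all the work lies, and the paper's resolution of it differs from what you sketch.

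Two concrete problems with your reduction. First, taking $j_0$ to be the time of \emph{closest} approach is the wrong choice. If the orbit segment makes an earlier close pass near some critical point at time $j_1<j_0$, then $Df^{j_0}(x)$ carries a small factor $\asymp\epsilon_1^{d-1}$ from that pass, and your target inequality $q_{n+1}^{2}(Df^{j_0}(x))^{2}\ge c_3\sum_j(Df^j(x))^2$ can fail outright: the right-hand side still contains the large terms with $j\le j_1$. You discarded the negative term at $j_1$, which is exactly the one that would have saved you. Second, even granting the inequality, your conclusion ``negative once $q_{n+1}$ is large enough'' is off: the quantity $-\kappa c_3C_1^{-2}+M$ has no $n$-dependence, so you need $c_3$ to exceed a specific threshold, not merely to be positive.

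The paper sidesteps both issues by working on the dynamical interval $I_n(x)$ rather than at the point $x$, and by taking $j$ to be the \emph{first} time $f^{j}(I_n(x))$ meets $J_n(c_0)$ --- before any critical passage, so that $f^{j}|_{I_n(x)}$ has bounded distortion by Koebe. Two preliminary results do the real work and are where bounded type is actually used: real bounds hold at \emph{every} base point $x_0\in S^1$, not only at critical points (Lemma~\ref{intersectcomp} and Theorem~\ref{realboundsBT}); and consequently one has the $C^1$ bound $Df^{k}(x)\le K\,|f^{k}(I_n(x))|/|I_n(x)|$ for all $k\le q_{n+1}$ (Proposition~\ref{lemmaC1bounds}). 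With these, the dominant negative term is $\asymp -|I_n(x)|^{-2}$, the positive remainder is $\lesssim \delta_n\,|I_n(x)|^{-2}$ with $\delta_n=\max_{x_0,k}|f^k(I_n(x_0))|\to0$, and the sign follows. Your suggestion to work on a neighbourhood $T$ of size $\asymp\epsilon_0$ is in the right spirit but picks the wrong scale: it is $|I_n(x)|$, not the closest-approach distance, that organises the estimate.

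Finally, your inductive remark is formally correct --- negative Schwarzian is closed under composition and $f^{q_{n+1}}=f^{q_{n-1}}\circ(f^{q_n})^{a_n}$ --- but it buys nothing: establishing the two base cases $n_0-1,\,n_0$ on all of $S^1$ is precisely the problem at hand.
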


We postpone the proof of both Proposition \ref{expdecay} and Proposition \ref{propnegschw} until Appendix \ref{app}. We emphasize that the statement of Proposition \ref{expdecay} is {\it false\/} for unbounded combinatorics. On the other hand, Proposition \ref{propnegschw} is most likely true for \emph{any} irrational rotation number (however, this more general fact will not be needed in this paper).

Our proof of Theorem \ref{nosigbound} will be based on the following lemma. Recall that we are fixing our attention on a critical point $c$ of $f$. Below, we use the following notation: for all $i\geq 0$, let  $c_{-i}=f^{-i}(c)$; we write accordingly  $I_n(c_{-i})=f^{-i}(I_n(c))$ for all $n\geq 0$ and all $i\geq 0$. 

\begin{lemma}\label{cruciallemma}
 There exist constants $K>1$ and  $0<\theta<1$ such that the following holds for all $n$ sufficiently large and each $0\leq i<q_{n}$. There exist subintervals $\Delta_{i,n}'\subset I_{n+1}(c_{-i})$ and $\Delta_{i,n}''\subset I_n(c_ {-i})$  such that
 \begin{enumerate}
  \item[(i)] $\Delta_{i,n}'\cap \Delta_{i,n}'' = \textrm{\O}$;
  \item[(ii)] $|\Delta_{i,n}'|\geq 2|\Delta_{i,n}''|$; 
  \item[(iii)] $|\Delta_{i,n}''|\geq \theta |I_n(c_{-i})|$;
  \item[(iv)] $\Delta_{i,n}''=f^{q_n}(\Delta_{i,n}')$, and $f^{q_n}|_{\Delta_{i,n}'}:\,\Delta_{i,n}'\to \Delta_{i,n}''$ is a diffeomorphism whose distortion is bounded by $K$.
 \end{enumerate}
\end{lemma}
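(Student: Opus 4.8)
The plan is to establish Lemma \ref{cruciallemma} by first constructing the intervals $\Delta_{0,n}'$ and $\Delta_{0,n}''$ associated to the critical point $c=c_0$ itself, and then transporting them to the backward orbit points $c_{-i}$ by the iterates $f^{-i}$, $0\leq i<q_n$, controlling distortion along the way. For the base case at $c$, I would look at the atoms of $\mathcal{P}_{n+1}(c)$ sitting inside $I_n(c)$; these are the consecutive intervals $\Delta_0=f^{q_n}(I_{n+1}(c))$, $\Delta_1=f^{q_{n+1}}(\Delta_0)$, \dots, $\Delta_{a_{n+1}-1}$, with $f^{q_{n+1}}$ mapping each onto the next. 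Since the rotation number is of bounded type, $a_{n+1}$ is bounded, so there are only boundedly many such atoms. The idea is to take $\Delta_{0,n}'=\Delta_0$ and $\Delta_{0,n}''=\Delta_1=f^{q_{n+1}}(\Delta_0)$ — but note the statement wants the map to be a power of $f^{q_n}$, not $f^{q_{n+1}}$; so instead I expect to take $\Delta_{0,n}'=I_{n+1}(c)$ and $\Delta_{0,n}''=f^{q_n}(I_{n+1}(c))=\Delta_0\subset I_n(c)$, with $f^{q_n}$ the connecting map. By the real bounds (Theorem \ref{realbounds}) both $|I_{n+1}(c)|$ and $|f^{q_n}(I_{n+1}(c))|$ are comparable to $|I_n(c)|$ with a uniform constant, which immediately gives (iii) for suitable $\theta$.

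The ratio $|\Delta_{0,n}''|/|\Delta_{0,n}'|=|\Delta_0|/|I_{n+1}(c)|$ need not be $\leq 1/2$ on the nose, so I would use Proposition \ref{expdecay}: passing to a deeper level, replace $\Delta_{0,n}'$ by an appropriate atom $J$ of $\mathcal{P}_{n+k+1}(c)$ for a uniformly bounded $k$ (chosen so that $C_0\lambda_1^k\leq 1/2$), taking $\Delta_{0,n}'=J\subset I_{n+1}(c)$ and $\Delta_{0,n}''=f^{q_n}(J)$. Since $k$ is bounded and the combinatorics are bounded, $\Delta_{0,n}''$ is still comparable to $I_n(c_{-i})$ for (iii). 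For (iv), the distortion of $f^{q_n}$ on $I_{n+1}(c)$ (hence on $J$) is controlled: here is where Proposition \ref{propnegschw} enters — $f^{q_n}$ has negative Schwarzian on the relevant domain for $n$ large, so by the minimum principle for negative-Schwarzian maps together with the real bounds (which provide definite scaled neighbourhoods on both sides — this is exactly the hypothesis needed to apply Koebe, cf. Lemma \ref{koebe}), the distortion of $f^{q_n}|_{I_{n+1}(c)}$ is bounded by a uniform $K$. Disjointness (i) follows because $J\subset I_{n+1}(c)$ while $f^{q_n}(J)\subset I_n(c)$, and $I_{n+1}(c)$, $I_n(c)$ are adjacent atoms of $\mathcal{P}_n(c)$ meeting only at $c$; one should double-check that after passing to the deeper level $J$ and its image stay on opposite sides of $c$, which they do since $f^{q_n}(I_{n+1}(c))=\Delta_0\subset I_n(c)$.

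The transport step: for $0\leq i<q_n$, set $\Delta_{i,n}'=f^{-i}(\Delta_{0,n}')$ and $\Delta_{i,n}''=f^{-i}(\Delta_{0,n}'')$. Then (i) is preserved under the homeomorphism $f^{-i}$. The connecting map in (iv) becomes $f^{-i}\circ f^{q_n}\circ f^{i}$ restricted to $\Delta_{i,n}'$; its distortion is $\mathrm{CrD}$-controlled via the Cross-Ratio Inequality, since the intervals $I_n(c), f^{-1}(I_n(c)),\dots,f^{-(q_n-1)}(I_n(c))$ are pairwise disjoint (they are the long atoms of $\mathcal{P}_n(c)$ together with images), so the total length of the orbit pieces is at most $1$ and one gets bounded distortion for $f^{-i}$ on $\Delta_{0,n}'$ and on $\Delta_{0,n}''$, hence for the composed diffeomorphism, with a uniform $K$ (possibly enlarged). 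For (ii) and (iii): the ratio $|\Delta_{i,n}''|/|\Delta_{i,n}'|$ differs from $|\Delta_{0,n}''|/|\Delta_{0,n}'|$ only by a bounded-distortion factor of $f^{-i}$ — so we must choose $k$ in the base-case construction so that $|\Delta_{0,n}''|/|\Delta_{0,n}'|$ is smaller than $1/2$ by enough of a margin to absorb this distortion, i.e. require $C_0\lambda_1^k\cdot(\text{distortion const})\leq 1/2$, still achievable with bounded $k$ by Proposition \ref{expdecay}. Likewise $|\Delta_{i,n}''|\asymp |f^{-i}(I_n(c))|=|I_n(c_{-i})|$ by bounded distortion of $f^{-i}$, giving (iii).

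\textbf{Main obstacle.} The delicate point is the distortion control of $f^{q_n}$ (and of $f^{-i}$ composed with it) on an interval that is merely an \emph{atom} — $f^{q_n}$ is not a diffeomorphism on all of a neighbourhood of $I_{n+1}(c)$ because it may have critical points nearby; one must check that on $I_{n+1}(c)$ itself (which abuts the critical point $c$ at one endpoint) the map $f^{q_n}$ is still a diffeomorphism and that the hypotheses of Koebe/the negative-Schwarzian minimum principle genuinely hold, i.e. that $f^{q_n}(I_{n+1}(c))$ has definite space inside a larger interval to which $f^{q_n}$ extends diffeomorphically. This is precisely what Proposition \ref{propnegschw} and the real bounds are designed to supply, but assembling them correctly — in particular tracking that no critical point of $f^{q_n}$ lies in the \emph{interior} of the domain being used — is the part requiring care.
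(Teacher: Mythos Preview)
Your proposal has a genuine gap at the heart of the argument: the mechanism you invoke to force $|\Delta''|/|\Delta'|\leq\tfrac12$ does not work. You propose to pass to a deeper atom $J\in\mathcal{P}_{n+k+1}(c)$ with $J\subset I_{n+1}(c)$, set $\Delta'=J$ and $\Delta''=f^{q_n}(J)$, and choose $k$ so that $C_0\lambda_1^k\leq\tfrac12$. But Proposition~\ref{expdecay} bounds $|J|/|I_{n+1}(c)|$, not $|f^{q_n}(J)|/|J|$. Since $f^{q_n}(J)$ is itself an atom of $\mathcal{P}_{n+k+1}(c)$ (sitting inside $I_n(c)$), Proposition~\ref{expdecay} applied to both numerator and denominator yields only
\[
\frac{|f^{q_n}(J)|}{|J|}\;\leq\; C_0^2\,\Bigl(\frac{\lambda_1}{\lambda_0}\Bigr)^{k}\cdot(\text{real-bounds constant}),
\]
which \emph{deteriorates} as $k$ grows because $\lambda_1>\lambda_0$. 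Going deeper in the partition does not, by itself, contract the ratio.

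The missing ingredient is that each $c_{-i}$ (for $0\leq i<q_n$) is a \emph{critical point of $f^{q_n}$}, with some power-law exponent $d=d(i,n)>1$. For the atom $I=I_{n+k+1}(c_{-i})$ abutting that critical point one has $|f^{q_n}(I)|/|I_n(c_{-i})|\asymp\bigl(|I|/|I_n(c_{-i})|\bigr)^{d}$, hence $|f^{q_n}(I)|/|I|\asymp\lambda^{k(d-1)}\to 0$; this power-law is the contraction engine. But precisely because $f^{q_n}$ has a critical point at $c_{-i}$, one cannot take $\Delta'=I$ and still have a diffeomorphism with bounded distortion; the paper therefore descends $p$ further levels (with $p$ depending only on the number $N$ of critical points of $f$) and uses pigeonhole to locate three consecutive atoms $L,M,R\subset I$ of $\mathcal{P}_{n+k+p}(c_{-i})$ containing no critical point of $f^{q_n}$, sets $\Delta'=M$, and applies Koebe (via the negative Schwarzian from Proposition~\ref{propnegschw}) on $L\cup M\cup R$. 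Your ``main obstacle'' paragraph senses this difficulty but supplies neither the power-law step nor the pigeonhole step. Finally, note that the paper works directly at each $c_{-i}$ (using the real bounds of Theorem~\ref{realboundsBT} and Proposition~\ref{expdecay} at arbitrary base points, valid in bounded type) rather than transporting from $c$ by $f^{-i}$; this sidesteps the distortion control of $f^{-i}$ near critical values that your transport scheme would still have to confront.
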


\begin{proof} We assume from the start that $n$ is so large that $f^{q_n}|_{I_{n+1}(c_{-i})}$ has negative Schwarzian derivative for all $0\leq i<q_{n}$. This is possible by Proposition \ref{propnegschw}. {\emph{Note that each $c_{-i}$ for $0\leq i<q_{n+1}$ is a critical point of $f^{q_n}$}}. In what follows, we keep $n$ and $0\leq i<q_{n}$ fixed. 

Note that for all $k\geq 0$ {\it even} we have $I_{n+k+1}(c_{-i})\subseteq I_{n+1}(c_{-i})$. By Proposition \ref{expdecay}, there exist constants $0<\lambda_0<\lambda_1<1$ and $C_0>1$ such that
 \begin{equation}\label{largeandsmall1}
  C_0^{-1}\lambda_0^k\;\leq\; \frac{|I_{n+k+1}(c_{-i})|}{|I_{n}(c_{-i})|} \;\leq\; C_0\lambda_1^k \ .
 \end{equation}
Moreover, if we denote by $d=d(i,n)>1$ the power-law at the critical point $c_{-i}$ of $f^{q_n}$, then we have{\footnote{One can easily check that $d_{\min}\leq d(i,n)\leq d_{\max}^N$, where  $d_{\min}$ and $d_{\max}$ are the smallest and largest power-law exponents of the critical points of $f$, and $N$ is the number of such critical points.}} 
\begin{equation}\label{largeandsmall2}
 \frac{|f^{q_n}(I_{n+k+1}(c_{-i}))|}{|I_n(c_{-i})|} \;\asymp\; 
 \left(  \frac{|I_{n+k+1}(c_{-i})|}{|I_n(c_{-i})|}    \right)^d\ \ .
\end{equation}
Let us write $I=I_{n+k+1}(c_{-i})$ and $J=f^{q_n}(I)$; these are obviously disjoint intervals (see  Figure \ref{fig1}), and they are both atoms of $\mathcal{P}_{n+k}(c_{-i})$. 
Combining \eqref{largeandsmall1} with \eqref{largeandsmall2}, we deduce that there exists a constant $C_1>1$ (independent of $n$ and $k$) such that 
\begin{equation}\label{largeandsmall3}
 C_1^{-1} \lambda_0^{k(d-1)}|I|\;\leq\; |J| \;\leq\;
 C_1 \lambda_1^{k(d-1)}|I|
\end{equation}
Note that $f^{q_n}|_{I}:\,I\to J$ has at most $N$ critical points{\footnote{Again, $N$ is the total number of critical points of $f$.}}, and has negative Schwarzian at all regular points.
Note that, by choosing $k$ sufficiently large, we can make $|J|$  definitely smaller than $|I|$. The meaning of ``definitely smaller'', and thus how large $k$ has to be, will be clear in a moment.

\begin{figure}[t]
\begin{center}~
\hbox to \hsize{\psfrag{0}[][][1]{$0$} \psfrag{I}[][][1]{$I_n(c_{-i})$}
\psfrag{I1}[][][1]{$I_{n+1}(c_{-i})$}
\psfrag{f}[][][1]{$f^{q_n}$}
\psfrag{K}[][][1]{$I_{n+k+1}(c_{-i})$}
\psfrag{T}[][][1]{$\!T$}
\psfrag{d}[][][1]{$\Delta_{i,n}'$}
\psfrag{D}[][][1]{$\Delta_{i,n}''$}
\psfrag{c}[][][1]{$\;\;c_{-i}$}
\hspace{1.0em} \includegraphics[width=6.0in]{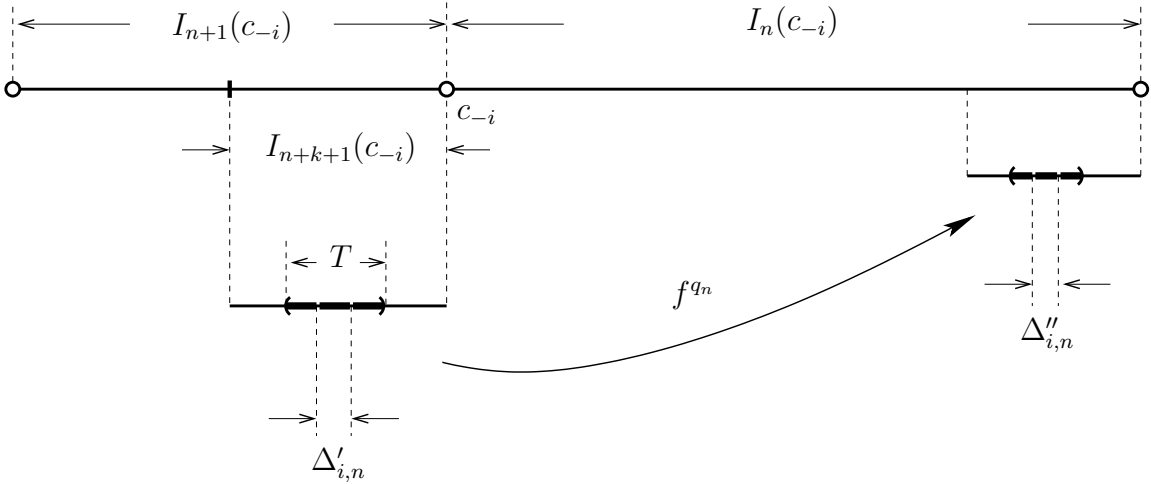}
   }
\end{center}
\caption[fig1]{\label{fig1} The iterate $f^{q_n}$ maps $\Delta_{i,n}'$ diffeomorphically onto $\Delta_{i,n}''$ with bounded distortion. }
\end{figure}

For $p\geq 0$, let us denote the number of atoms of $\mathcal{P}_{n+k+p}(c_{-i})$ inside $I$ (or $J$) by $a=a(n,k,p)$. Then we have $2^{p}\leq a\leq (A+1)^p$ (where $A=\sup{a_n}<\infty$ is the least upper bound on the convergents of the rotation number of $f$). Choose $p=p(N)$ smallest with the property that $2^{p}>3N+2$. Since $f^{q_n}|_{I}$ has at most $N$ critical points, and since $a>3N+2$, it follows from the pigeonhole principle that there exist $3$ consecutive atoms of $\mathcal{P}_{n+k+p}(c_{-i})$ inside $I$, say $L, M, R$, such that the open interval $T=\textrm{int}(L\cup M\cup R)$ contains no critical point of $f^{q_n}$. Hence $f^{q_n}|_{T}:T\to f^{q_n}(T)$ is a diffeomorphism with negative Schwarzian derivative. Applying Koebe's non-linearity principle, we see that
\begin{equation}\label{nonlin}
 |D\log{Df^{q_n}(x)}|\;\leq\; \frac{2}{\tau}\ \ \ \textrm{for all}\ x\in M\ .
\end{equation}
where $\tau$ is the {\it space\/} of $M$ inside $T$, namely
\[
 \tau\;=\; \min\left\{\frac{|L|}{|M|}\,,\,\frac{|R|}{|M|}\right\}\ .
\]
From the real bounds, we know that $\tau\geq C_2$, for some constant $C_2>0$. Using this fact in  \eqref{nonlin} and integrating the resulting inequality, we deduce that 
\begin{equation}\label{bounddist}
 e^{-2/C_2}\;\leq\; \frac{Df^{q_n}(x)}{Df^{q_n}(y)}\;\leq\; e^{2/C_2}\ ,\ \ \ \ \textrm{for all}\ x,y\in M\ .
\end{equation}
Now, applying once again Proposition \ref{expdecay} (note that we are using the bounded type hypothesis!), it follows that there exists a constant $C_3>1$ depending on $A$ such that 
\begin{equation}\label{eqM1}
 C_3^{-1}\lambda_0^p\;\leq\; \frac{|M|}{|I|}\;\leq\; C_3\lambda_1^p\ ,
\end{equation}
as well as 
\begin{equation}\label{eqM2}
 C_3^{-1}\lambda_0^p\;\leq\; \frac{|f^{q_n}(M)|}{|J|}\;\leq\; C_3\lambda_1^p\ ,
\end{equation}
Putting together \eqref{largeandsmall3}, \eqref{eqM1} and \eqref{eqM2}, we deduce that 
\begin{equation}\label{eqM3}
 |M|\;\geq\; C_1^{-1} C_3^{-2}\lambda_0^p  \lambda_1^{-k(d-1)-p}
 \,|f^{q_n}(M)| \ .
\end{equation}
Likewise, putting together \eqref{largeandsmall1}, \eqref{largeandsmall3} and \eqref{eqM3}, we get
\begin{equation}\label{eqM3.5}
 |f^{q_n}(M)|\;\geq\; (C_0C_1C_3)^{-1}\lambda_0^{kd+p}|I_n(c_{-i})|\ .
\end{equation}
Now let us choose $k\geq 1$ smallest with the property that 
\begin{equation}\label{eqM4}
C_1^{-1} C_3^{-2}\lambda_0^p  \lambda_1^{-k(d_0-1)-p}\;\geq\;2\ ,
\end{equation}
where $d_0=\min_{i,n}d(i,n)>1$
Such $k$ exists (and is independent of $n$) because $\lambda_1<1$. 

To finish the proof, we define $\Delta_{i,n}'=M$ and $\Delta_{i,n}''=f^{q_n}(M)$. These, we claim, are the intervals satisfying properties (i)-(iv) in the statement. Indeed, property (i) is clear. Property (iv) follows directly from \eqref{bounddist} if we take $K=e^{2/C_2}$. Property (ii) follows from inequalities \eqref{eqM3} and \eqref{eqM4}. 
Finally, property (iii) follows from \eqref{eqM3.5}, provided we take $\theta=(C_0C_1C_3)^{-1}\lambda_0^{kd+p}$. 
The proof is complete.
\end{proof}

\begin{proof}[Proof of Theorem \ref{nosigbound}] 
The proof will based on the generalized Katznelson criterion given by Theorem \ref{katzcrit}. Our argument combines Lemma \ref{cruciallemma} with the Cross Ratio Inequality. 

It is enough to show that $f$ satisfies the {\sl standing hypothesis\/} stated prior to Theorem~\ref{katzcrit} concerning the sequence of dynamical partitions $\mathcal{P}_n(c)$ for some choice of critical point $c$. For this purpose, as we have seen in the proof of that theorem (see also Remark \ref{shortcrit}), it suffices to prove the following statement.
\medskip 

\noindent{\it Claim.\/ For every sufficiently large $n$, every atom 
$I\in \mathcal{P}_n(c)$ contains two disjoint subintervals $\Delta',\Delta''$ such that: (a) $|\Delta'|\geq 2|\Delta''|$; (b) $|\Delta'|\asymp |I|\asymp |\Delta''|$; (c) there exists $q\geq 1$ such that $\Delta''=f^{q}(\Delta')$ and $f^{q}|_{\Delta'}: \Delta'\to \Delta''$ is a diffeomorphism with bounded distortion.}{\footnote{The claim's proof will show that $q=q_n$ or $q=q_{n+1}$, depending on whether $I$ is a {\it long} or {\it short} atom of $\mathcal{P}_n(c)$, respectively.}} 
\medskip

\begin{figure}[t]
\begin{center}~
\hbox to \hsize{
\psfrag{c}[][][1]{$c$} 
\psfrag{ci}[][][1]{$c_{-i}$} 
\psfrag{I}[][][1]{$I=f^{q_{n+1}-i}(I_n(c))$}
\psfrag{I0}[][][1]{$I_{n+1}(c)$}
\psfrag{I1}[][][1]{$I_{n}(c)$}
\psfrag{fq}[][][1]{$\;f^{q_{n+1}-i}$}
\psfrag{fi}[][][1]{$f^i$}
\psfrag{J}[][][1]{$J_{n+4}$}
\psfrag{Ji}[][][1]{$\!\!f^{-i}(J_{n+4})$}
\psfrag{T}[][][1]{$f^{q_{n+1}}(I_n(c))$}
\psfrag{d}[][][1]{$\Delta''$}
\psfrag{D}[][][1]{$\Delta'$}
\hspace{1.0em} \includegraphics[width=5.9in]{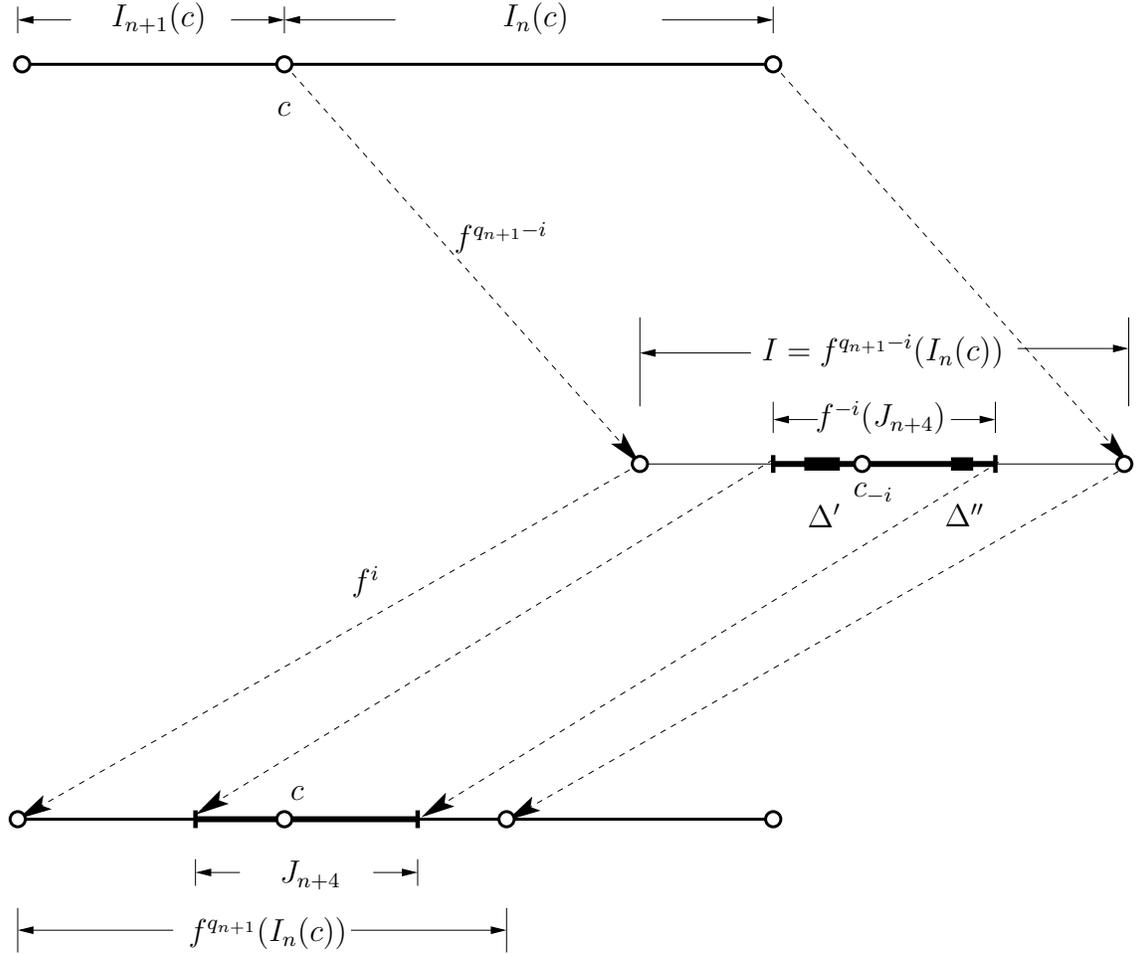}
   }
\end{center}
\caption[fig2]{\label{fig2} Finding two intervals, long and short, inside an atom $I\in \mathcal{P}_n(c)$. }
\end{figure}

The comparability constants and bounds implicit in this statement depend only on the real bounds for $f$ and the bound on the combinatorics.  
 To simplify the notation a bit, let us write $J_k=I_k(c)\cup I_{k+1}(c)$ for all $k\geq 0$. In order to prove the claim, we proceed through the following steps.
 \begin{enumerate}
  \item[(i)]  We may assume that $I$ is a {\it long} atom of $\mathcal{P}_n(c)$, say $I=f^{q_{n+1}-i}(I_n(c))$, where $1\leq i\leq q_{n+1}-1$. 
  If $I$ happens to be a {\it short} atom, all we have to do is recall that every short atom of $\mathcal{P}_n(c)$ is a long atom of $\mathcal{P}_{n+1}(c)$. 
  \item[(ii)] The interval $T=f^{q_{n+1}}(I_n(c))$ contains the interval $J_{n+4}$ in its interior, with definite space on both sides (see Figure \ref{fig2}). To see why this is true, first note that, by the real bounds, the interval $J_{n+4}$ is comparable to $|I_n(c)|$, {\it i.e.,} $|J_{n+4}|\asymp |I_n(c)|$. Consider the following two atoms of $\mathcal{P}_{n+1}(c)$, which also lie inside $T$:
  \[
   L^*= f^{q_{n+1}}(I_{n+2})\subset I_{n+1}(c)\ \ \ \textrm{and}\ \ \ 
   R^*= f^{q_n+q_{n+1}}(I_{n+1}(c))\subset I_n(c)\ .
   \]
 Both these intervals share an endpoint with $T$ (one on the left, the other on the right). 
 By simple combinatorics, we see that $J_{n+4}\subset T$ is disjoint from both $L^*$ and $R^*$. But by the real bounds, we have $|L^*|\asymp |I_{n+1}(c)|$ and $|R^*|\asymp |I_n(c)|$. If we denote by $L$ and $R$ the two connected components of $T\setminus J_{n+4}$, then one of them contains $L^*$ and the other contains $R^*$. For definiteness, we assume that 
 $L\supseteq L^*$ and $R^*\supseteq R$. Hence we have $|L|\asymp |I_{n+1}(c)|\asymp |T|$ and $|R^*|\asymp |I_n(c)|\asymp |T|$. 
 \item[(iii)] In particular, (ii) tells us that the cross-ratio $\bm{[}J_{n+4},f^{q_{n+1}}(I_n(c))\bm{]}$ is bounded away from $0$ and $\infty$.
 \item[(iv)] Now look at the interval 
 \[
 f^{-i}(J_{n+4})\subset f^{-i}(f^{q_{n+1}}(I_n(c)))=f^{q_{n+1}-i}(I_n(c))=I\ .
 \]
 Observe that $f^{-i}(J_{n+4})= I_{n+4}(c_{-i})\cup I_{n+5}(c_{-i})$ (in the notation introduced prior to Lemma \ref{cruciallemma}). Hence we can apply Lemma \ref{cruciallemma} (with $n$ replaced by $n+4$) and deduce that there exist intervals 
 \[
 \Delta'=\Delta_{i,n+4}'\subset I_{n+5}(c_{-i}) \ \ \ \text{and}\ \ \  
 \Delta''= \Delta_{i,n+4}''\subset I_{n+4}(c_{-i})
 \]
 satisfying properties (i)-(iv) of that lemma. In particular, we have 
 \begin{equation}\label{thetwodeltas}
 |\Delta'|\asymp |f^{-i}(J_{n+4})|\asymp |\Delta''|\ .
 \end{equation} 
 \item[(v)] The intervals $\Delta'$ and $\Delta''$ already satisfy properties (a) and (c) in the claim. Therefore, all we have to do is to verify that (b) holds as well. For this, it suffices to show that the intervals $f^{-i}(J_{n+4})$ and $I=f^{q_{n+1}}(I_n(c_{-i}))$ have comparable lengths. Let $L_i=f^{-i}(L)$ and $R_i=f^{-i}(R)$ be the two connected components of $I\setminus f^{-i}(J_{n+4})$. Since $L_i\supset f^{-i}(L^*)$ and $R_i\supset f^{-i}(R^*)$, and since 
 \[
  f^{-i}(L^*)=f^{q_{n+1}-i}(I_{n+2})\ \ \ \textrm{and}\ \ \ 
  f^{-i}(R^*)= f^{q_n+q_{n+1}-i}(I_{n+1}(c))
 \]
are both atoms of $\mathcal{P}_{n+1}(c)$ contained in the same atom $I\in \mathcal{P}_n(c)$, we deduce from the real bounds that $|L_i|\asymp|I|\asymp |R_i|$. By the cross-ratio inequality, the cross-ratio distortion 
$\mathrm{CrD}(f^i; f^{-i}(J_{n+4}), I)$ is bounded above. Combining this fact with (iii), we deduce that the cross-ratio $[f^{-i}(J_{n+4}), I]$ is bounded below. Since the two lateral intervals $L_i,R_i\subset I$ and the total interval $I$ have comparable lengths, it follows that the middle interval $f^{-i}(J_{n+4})\subset I$ also has length comparable to $|I|$. 
Together with \eqref{thetwodeltas}, this shows at last that 
$|\Delta'|\asymp |I|\asymp |\Delta''|$. 
 \end{enumerate}
 
This completes the proof of our claim. And as we had already observed, the claim implies that $f$ satisfies the hypotheses of Theorem \ref{katzcrit}.
Therefore it satisfies the conclusion as well: $f$ does not admit a $\sigma$-finite absolutely continuous invariant measure. This finishes the proof of Theorem \ref{nosigbound}.
\end{proof}

\subsection{The punchline} Our main theorem, namely Theorem \ref{main}, is now an immediate consequence of steps 1 and 2, or more precisely, of Theorems \ref{nosigcrit} and \ref{nosigbound}. 

\appendix

\section{The negative Schwarzian property\\ for bounded combinatorics}\label{app}

Our goal in this appendix is to provide a proof of both Proposition \ref{expdecay} and Proposition \ref{propnegschw}.

\subsection{Bounded geometry}\label{secapbounds} Let $f$ be a $C^3$ multicritical circle map (as in Definition \ref{defmccm}) with irrational rotation number. We say that $f$ has \emph{bounded geometry} at $x \in S^1$ if there exists $K>1$ such that for all $n\in\nt$ and for every pair $I,J$ of adjacent atoms of $\mathcal{P}_n(x)$ we have$$K^{-1}\,|I| \leq |J| \leq K\,|I|\,.$$Following \cite[Section 1.4]{dFG19}, we consider the set$$\mathcal{A}=\mathcal{A}(f)=\{x \in S^1:\,\mbox{$f$ has bounded geometry at $x$}\}\,.$$In other words, $x\in\mathcal{A}$ if $|I|\asymp|J|$ for any two adjacent atoms $I$ and $J$ of the dynamical partition associated to $x$ at any level $n$. As explained in \cite[Section 1.4]{dFG19}, the set $\mathcal{A}$ is $f$-invariant. Moreover, as it follows from the classical real bounds of Herman and \'Swi\c{a}tek (Theorem \ref{realbounds}), all critical points of $f$ belong to $\mathcal{A}$. Being $f$-invariant and non-empty, the set $\mathcal{A}$ is dense in the unit circle. However, even in the case of maps with a single critical point, $\mathcal{A}$ can be rather small. Indeed, the following is \cite[Theorem D]{dFG19}.

\begin{theorem}\label{teoAmagro} There exists a full Lebesgue measure set $\bm{R}\subset(0,1)$ of irrational numbers with the following property: let $f$ be a $C^3$ critical circle map with a single (non-flat) critical point and rotation number $\rho\in\bm{R}$. Then the set $\mathcal{A}(f)$ is meagre (in the sense of Baire) and it has zero $\mu$-measure (where $\mu$ denotes the unique $f$-invariant probability measure).
\end{theorem}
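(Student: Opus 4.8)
The plan is to realise $\mathcal{A}(f)$ as a countable increasing union of closed sets, each of which is both nowhere dense and $\mu$-negligible, for every $\rho$ in a suitable full-measure set $\bm{R}$. For $K>1$ let $\mathcal{A}_K$ be the set of $x\in S^1$ such that $|I|\le K|J|$ for all $n$ and all pairs $I,J$ of adjacent atoms of $\mathcal{P}_n(x)$. Since the endpoints of the atoms of $\mathcal{P}_n(x)$ depend continuously on $x$, each $\mathcal{A}_K$ is closed, and $\mathcal{A}(f)=\bigcup_{K\in\nt}\mathcal{A}_K$; so it is enough to show that each $\mathcal{A}_K$ has empty interior and zero $\mu$-measure. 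The guiding heuristic is that, for a circle map with a single non-flat critical point $c$, the only place where comparability of adjacent atoms of a dynamical partition can be lost \emph{or} regained is near $c$: outside any fixed neighbourhood of $c$, the real bounds (Theorem~\ref{realbounds}), Koebe's principle (Lemma~\ref{koebe}) and the cross-ratio inequality force the relevant iterates to be diffeomorphisms with level-independent distortion bounds, whereas at $c$ the power-law singularity is exactly what allows the ``beau'' bounded geometry carried by the orbit of $c$.

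The crux is a \emph{quantitative dichotomy}. One proves that there exist a constant $b_0>1$ and, for each level $n$, an interval $T_n\ni c$ of $\mu$-measure $\mu(T_n)\lesssim (a_{n+1}q_{n+1})^{-1}$, such that for every $x\in S^1$ at least one of the following holds: either some iterate $f^{j}(x)$ with $0\le j\le q_{n+1}$ falls inside $T_n$ (``the orbit of $x$ shadows $c$ at level $n$''), or else $\mathcal{P}_n(x)$ contains two adjacent atoms whose length ratio exceeds $b_0\,a_{n+1}$. The proof transports the ``critical'' pair $I_n(x),I_{n+1}(x)$ --- which in the rigid-rotation model is forced to have ratio $\asymp a_{n+1}$ --- back towards the base point, bounding the accumulated cross-ratio distortion by the cross-ratio inequality and the real bounds; the comparability it enjoys can then only be altered by a bounded factor unless, somewhere along the orbit, one enters a neighbourhood of $c$ small enough to produce a compensating $a_{n+1}$-fold contraction gradient, and estimating the size of that neighbourhood is what yields the bound on $\mu(T_n)$. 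Since, by the Borel--Bernstein theorem, $\limsup_n a_n=\infty$ for Lebesgue-a.e.\ (and for a residual set of) $\rho$, we may fix such a $\rho$ and extract a sparse sequence of ``bad'' levels $n_1<n_2<\cdots$ with $a_{n_k+1}\ge 2^{k}$. By the dichotomy, $\mathcal{A}_K\subseteq S_K:=\{x:\ f^{j}(x)\in T_{n_k}\ \text{for some } j\le q_{n_k+1},\ \text{for all large } k\}$.

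Now $\mu(S_K)=0$ by an elementary Borel--Cantelli estimate: by $f$-invariance of $\mu$ and a union bound, the set of $x$ shadowing $c$ at level $n_k$ has $\mu$-measure at most $(q_{n_k+1}+1)\,\mu(T_{n_k})\lesssim a_{n_k+1}^{-1}\le 2^{-k}$, and $\sum_k 2^{-k}<\infty$, so $\mu$-a.e.\ $x$ shadows $c$ at only finitely many $n_k$ and hence lies outside $S_K$. Consequently $\mu(\mathcal{A}_K)=0$ for every $K$, and $\mu(\mathcal{A}(f))=0$. The same bound also gives nowhere-density: given any non-trivial interval $U$, choose $k$ so large that $\mu(U)$ exceeds the $\mu$-measure of the set of points shadowing $c$ at level $n_k$; then $U$ contains a point $x$ whose orbit up to time $q_{n_k+1}$ misses $T_{n_k}$, and by the dichotomy (with $a_{n_k+1}\ge 2^{k}>K/b_0$) this $x$ has a pair of adjacent atoms of $\mathcal{P}_{n_k}(x)$ with ratio exceeding $K$, i.e.\ $x\in U\setminus\mathcal{A}_K$. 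Hence $\mathcal{A}_K$ is nowhere dense, and $\mathcal{A}(f)$ is meagre. (The set $\bm{R}$ is taken to be the full-measure set of irrationals with $\limsup a_n=\infty$, intersected if necessary with a further full-measure Diophantine condition used to make the size estimate for $T_n$ uniform.)

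The decisive difficulty is proving the quantitative dichotomy, and in particular the upper bound $\mu(T_n)\lesssim (a_{n+1}q_{n+1})^{-1}$: one must show not only that a large partial quotient $a_{n+1}$ produces a large size discrepancy between adjacent atoms of $\mathcal{P}_n(x)$ in the rotation model, but that this discrepancy survives for $f$ unless the orbit of $x$ penetrates \emph{deep} into a small neighbourhood of $c$ --- quantitatively, to a depth growing with $\log a_{n+1}$. Equivalently, one needs a level-uniform statement to the effect that the critical point is the \emph{unique} site at which the $a_{n+1}$-fold ``flattening'' of adjacent atoms can take place. This is morally the content of the real bounds of Herman and \'Swi\c{a}tek together with Koebe's distortion principle and the cross-ratio inequality, but packaging these into a clean \emph{if and only if} characterisation of $\mathcal{A}(f)$, robust enough to feed both the Borel--Cantelli and the Baire-category arguments, is the technical heart of the matter.
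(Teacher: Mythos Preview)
The present paper does not prove Theorem~\ref{teoAmagro}: it is merely quoted from \cite[Theorem~D]{dFG19} to contrast the bounded- and unbounded-type situations (see the sentence immediately preceding the statement in Section~\ref{secapbounds}). There is therefore no proof in this paper to compare your proposal against; the actual argument lives in \cite{dFG19}.

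On your sketch itself: the reduction is sound. Writing $\mathcal{A}(f)=\bigcup_{K\in\nt}\mathcal{A}_K$ with each $\mathcal{A}_K$ closed, and then deducing both $\mu$-nullity and nowhere-density from a shadowing dichotomy via Borel--Cantelli, is a correct and natural scaffold, and your derivation of the two conclusions from the dichotomy is fine (in particular, since $\mu$ has full support, a closed $\mu$-null set is automatically nowhere dense, which slightly shortens your Baire argument). But the entire weight rests on the quantitative dichotomy and the bound $\mu(T_n)\lesssim (a_{n+1}q_{n+1})^{-1}$, which you correctly flag as the ``decisive difficulty'' and do not prove. Your heuristic for it is underspecified at the crucial point: you say the pair $I_n(x),I_{n+1}(x)$ ``in the rigid-rotation model is forced to have ratio $\asymp a_{n+1}$'', but for $f$ this ratio is \emph{not} a priori $\asymp a_{n+1}$ --- at $x=c$ it is uniformly bounded by the real bounds, and how it varies with $x$ is exactly the question. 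You do not specify which iterate performs the ``transport'', to which configuration of intervals the cross-ratio inequality is applied, or how the power-law at $c$ quantitatively converts depth of approach into change of ratio so as to yield precisely the $1/(a_{n+1}q_{n+1})$ scale for $T_n$. As it stands, then, the proposal is a plausible outline with its central lemma missing, and the heuristic offered does not yet constitute a proof of that lemma; you should consult \cite{dFG19} to see how the argument is actually carried out.
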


By contrast, if $f$ has bounded combinatorics, then the set $\mathcal{A}(f)$ is the whole circle (as a consequence, the full Lebesgue measure set $\bm{R}\subset(0,1)$ given by Theorem \ref{teoAmagro} contains no bounded type numbers). Let us be more precise.

\begin{theorem}\label{realboundsBT} For any given multicritical circle map $f$ with bounded combinatorics there exists a constant $C>1$, depending only on $f$, such that for any given point $x \in S^1$, for all $n\in\nt$, and for every pair $I,J$ of adjacent atoms of $\mathcal{P}_n(x)$ we have:$$C^{-1}\,|I| \leq |J| \leq C\,|I|\,.$$
\end{theorem}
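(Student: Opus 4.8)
The plan is to upgrade the classical real bounds (Theorem~\ref{realbounds}), which give comparability of adjacent atoms of $\mathcal{P}_n(x)$ only for $x$ a critical point and only for $n\geq n_0(f)$, to a statement uniform over \emph{all} $x\in S^1$ and \emph{all} $n\in\nt$, under the standing assumption of bounded combinatorics. The key point is that bounded combinatorics lets us compare the geometry of $\mathcal{P}_n(x)$ with that of $\mathcal{P}_n(c)$ for a fixed critical point $c$ by pushing forward along a controlled number of iterates of $f$, using the Cross-Ratio Inequality to bound the distortion of those iterates.

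First I would fix a critical point $c$ and, using Theorem~\ref{realbounds}, record that adjacent atoms of $\mathcal{P}_n(c)$ are comparable for all $n\geq n_0(f)$; the finitely many levels $n<n_0(f)$ are handled trivially by compactness. Next, given an arbitrary $x\in S^1$, write $x=f^{j}(c)$ for a suitable point, or rather observe that the atom of $\mathcal{P}_n(x)$ containing a given point is the image under an iterate $f^{j}$ of an atom of $\mathcal{P}_n(c)$ (equivalently, run the partition from a point in the orbit closure; since $f$ is minimal, orbits of $c$ are dense, and one passes to the closure). The cleaner route is this: every atom of $\mathcal{P}_n(x)$ is of the form $f^{s}(I)$ where $I$ is an atom of $\mathcal{P}_n(c)$ and $0\leq s\leq q_{n+1}$ (for long atoms) or $0\leq s\leq q_n$ (for short atoms) — this is exactly the structure of the dynamical partition. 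One then needs to compare $|f^s(I)|$ with $|f^s(J)|$ for $I,J$ adjacent atoms of $\mathcal{P}_n(c)$.

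The mechanism for this comparison is the Cross-Ratio Inequality together with bounded geometry at the base point $c$. Since $I,J$ are adjacent atoms of $\mathcal{P}_n(c)$, the union $I\cup J$ together with a further neighbouring atom forms a pair $M\Subset T$ with $M=I$ (or $J$) having definite space inside $T$, by the real bounds at $c$. Along the iterates $f^0(T),f^1(T),\dots,f^{s-1}(T)$ we have $s\leq q_{n+1}$ and these intervals have multiplicity of intersection bounded by a universal constant (each point is covered boundedly often, since the $T$'s are essentially atoms of a dynamical partition of $c$), so the Cross-Ratio Inequality bounds $\crd(f^s;M,T)$ by $C^m$ with $m$ universal. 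This forces the cross-ratio $[f^s(M),f^s(T)]$ to stay bounded away from $0$ and $1$, which (combined with $|f^s(I)|+|f^s(J)|+|f^s(T\setminus(I\cup J))|=|f^s(T)|$ and the fact that the outer pieces have comparable sizes, propagated the same way) yields $|f^s(I)|\asymp|f^s(J)|$. Taking $C$ to be the resulting universal constant — which depends only on the real-bounds constant $C(N,d)$, on $n_0(f)$, and on the bound $A=\sup a_n$ on the combinatorics — completes the proof; the short-atom case is identical with $q_{n+1}$ replaced by $q_n$.

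The main obstacle, and the step requiring the most care, is controlling the multiplicity of intersection of the family $\{f^i(T):0\leq i\leq s-1\}$ so that the Cross-Ratio Inequality gives a bound \emph{independent of $n$} (and hence of $s$, which grows with $n$). This is where bounded combinatorics is essential: it ensures that $T$ can be taken to be a bounded union of atoms of $\mathcal{P}_n(c)$ (rather than the whole circle), so that its forward iterates up to time $q_{n+1}$ tile the circle with bounded overlap; without the $\sup a_n<\infty$ hypothesis one would have no such uniform bound, which is precisely why Proposition~\ref{expdecay} and this theorem fail for unbounded combinatorics. Once the multiplicity bound $m=m(A,N)$ is secured, everything else is a routine application of tools already stated in the excerpt.
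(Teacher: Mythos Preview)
Your central structural claim --- that every atom of $\mathcal{P}_n(x)$ is of the form $f^s(I)$ with $I$ an atom of $\mathcal{P}_n(c)$ and $0\leq s\leq q_{n+1}$ --- is not correct. An atom such as $I_n(x)=[x,f^{q_n}(x)]$ has endpoints on the orbit of $x$, not of $c$; it equals $f^s(I_n^j(c))$ only when $x=f^{s+j}(c)$, i.e.\ only for $x$ in the orbit of $c$. For such $x$ the relation $\mathcal{P}_n(f^k(c))=f^k(\mathcal{P}_n(c))$ does hold, but with $k$ completely unrestricted: nothing forces $k\leq q_{n+1}$, and since $f$ has no periodic points, distinct $k$ give distinct base points. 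Your proposed passage to the closure does not rescue this, because it would require the ratios $|I|/|J|$ (for adjacent $I,J\in\mathcal{P}_n(x)$) to vary in a controlled way as $x$ moves across an atom of $\mathcal{P}_n(c)$ --- exactly the uniformity the theorem is meant to establish. A secondary issue: the Cross-Ratio Inequality bounds $\crd(f^s;M,T)$ from above, hence gives only $[f^s(M),f^s(T)]\leq C^m[M,T]$; to bound the image cross-ratio from \emph{below} one must run the CRI from the intermediate time $s$ forward to a later time at which the cross-ratio is already known to be controlled (as the paper does in Lemma~\ref{lemaspaceapp}), not from time $0$ to $s$.

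The paper avoids the push-forward idea altogether. Instead of trying to realise $\mathcal{P}_n(x)$ as an image of $\mathcal{P}_n(c)$, it compares the two partitions where they overlap: Lemma~\ref{intersectcomp} shows that whenever $\Delta\in\mathcal{P}_n(c)$ and $\Delta'\in\mathcal{P}_n(x)$ intersect, then $|\Delta|\asymp|\Delta'|$. The same-type cases (long/long and short/short) use only Lemma~\ref{lema33} --- the estimate $|y-f^{-q_n}(y)|\asymp|f^{q_n}(y)-y|$, valid for every $y\in S^1$ --- together with the elementary containment $\Delta'\subset\Delta\cup f^{\pm q_n}(\Delta)$. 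Bounded combinatorics enters \emph{only} in the mixed long/short case, through Corollary~\ref{coroRBBT}: one needs every atom of $\mathcal{P}_{n+1}(c)$ lying inside a given atom of $\mathcal{P}_n(c)$ to be comparable to it, and this is exactly what fails when $a_{n+1}$ is unbounded. Once Lemma~\ref{intersectcomp} is established, Theorem~\ref{realboundsBT} follows at once, since two adjacent atoms of $\mathcal{P}_n(x)$ meet a common atom (or adjacent atoms) of $\mathcal{P}_n(c)$. Note that your diagnosis of where bounded type is needed --- in bounding the multiplicity of $\{f^i(T)\}$ --- is off the mark: three adjacent atoms iterate with universally bounded multiplicity regardless of the partial quotients.
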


We remark that, precisely because $f$ has bounded combinatorics, Proposition \ref{expdecay} follows at once from Theorem \ref{realboundsBT}. As explained in \cite[Section 1.3]{dFG19}, Theorem \ref{realboundsBT} follows from a result of Herman \cite{H}, which states that $f$ is \emph{quasisymmetrically} conjugate to the corresponding rigid rotation. For the sake of completeness (and because it is going to be crucial in Section \ref{secapschw} below), we would like to end Section \ref{secapbounds} by providing a different proof of Theorem \ref{realboundsBT}, without using Herman's result. With this purpose, we state first the following immediate consequence of Theorem \ref{realbounds}, which only holds for bounded combinatorics (if $\rho(f)=[a_0,a_1,...]$ with $\sup_{n\in\nt}\{a_n\} \leq B$, we say that $f$ has \emph{combinatorics bounded by $B$}).

\begin{coro}\label{coroRBBT} Given $B>1$, $N\geq 1$ in $\nt$ and $d>1$ there exists $C=C(B,N,d)>1$ with the following property: for any given multicritical circle map $f$ with combinatorics bounded by $B$, and with at most $N$ critical points whose criticalities are bounded by $d$, there exists $n_0=n_0(f)\in\nt$ such that for each critical point $c$ of $f$, for all $n \geq n_0$ and for every pair of intervals $I\in\mathcal{P}_{n}(c)$ and $J\in\mathcal{P}_{n+1}(c)$ satisfying $J\subseteq I$, we have that\, $|I| \leq C\,|J|$.
\end{coro}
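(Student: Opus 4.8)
The plan is to deduce the corollary directly from Theorem~\ref{realbounds} (the real bounds), by exploiting that the refinement $\mathcal{P}_{n+1}(c)\rightarrow\mathcal{P}_n(c)$ subdivides each atom of $\mathcal{P}_n(c)$ into a \emph{uniformly bounded} number of atoms of $\mathcal{P}_{n+1}(c)$, precisely because the combinatorics are bounded by $B$.

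First I would recall the standard combinatorial description of the refinement (see e.g. \cite[Lemma 2.4]{EdFG18}): every \emph{short} atom of $\mathcal{P}_n(c)$ is already an atom of $\mathcal{P}_{n+1}(c)$, so in that case $J=I$ and there is nothing to prove; while every \emph{long} atom $I_n^j(c)$ of $\mathcal{P}_n(c)$ (with $0\leq j\leq q_{n+1}-1$) decomposes as the disjoint union
\[
  I_n^j(c)\;=\;I_{n+2}^{\,j}(c)\;\cup\;\bigcup_{k=0}^{a_{n+1}-1} I_{n+1}^{\,j+q_n+kq_{n+1}}(c)\,,
\]
which is a union of exactly $a_{n+1}+1\leq B+1$ atoms of $\mathcal{P}_{n+1}(c)$, consecutive along the circle.

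Next, I would let $n_0=n_0(f)\in\nt$ be the constant of Theorem~\ref{realbounds} and $C=C(N,d)>1$ the corresponding universal real-bounds constant. Fix a critical point $c$, an integer $n\geq n_0$, and a long atom $I\in\mathcal{P}_n(c)$; write its decomposition from the previous step as $I=J_1\cup\cdots\cup J_r$ with $r=a_{n+1}+1\leq B+1$, the $J_t$ listed consecutively along $I$. Since $n+1\geq n_0$ and consecutive $J_t$'s are adjacent atoms of $\mathcal{P}_{n+1}(c)$, chaining the real-bounds comparability through at most $r-1\leq B$ adjacencies gives $|J_t|\leq C^{\,B}\,|J_s|$ for all $s,t$. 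In particular, for any atom $J\in\mathcal{P}_{n+1}(c)$ with $J\subseteq I$ (so $J=J_s$ for some $s$), summing over the at most $B+1$ pieces yields
\[
  |I|\;=\;\sum_{t=1}^{r}|J_t|\;\leq\; r\,C^{B}\,|J|\;\leq\;(B+1)\,C^{B}\,|J|\,,
\]
so the corollary holds with $C(B,N,d)=(B+1)\,C(N,d)^{B}$.

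I do not expect any real obstacle here: the only point that genuinely uses the bounded-type hypothesis is that $a_{n+1}+1\leq B+1$ is bounded, and this is exactly why the estimate must fail for unbounded combinatorics, since in a long atom with $a_{n+1}$ very large the smallest sub-atom of $\mathcal{P}_{n+1}(c)$ inside it is smaller than the atom itself by a factor growing with $a_{n+1}$ (compare Yoccoz's lemma, Lemma~\ref{yoccozlemma}), so no $n$-independent constant can work.
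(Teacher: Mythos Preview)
Your argument is correct and is precisely the standard way to unpack what the paper calls an ``immediate consequence of Theorem~\ref{realbounds}'': the paper gives no proof at all beyond that phrase, and your chaining of the real-bounds comparability across the at most $B+1$ consecutive $\mathcal{P}_{n+1}(c)$-atoms inside a long $\mathcal{P}_n(c)$-atom is exactly the intended reasoning. Your remark on why the statement fails for unbounded type is also accurate and matches the paper's emphasis.
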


The next result we will prove states that any two intersecting atoms belonging to the same level $n$ of the dynamical partitions associated to a critical and a regular point respectively, are comparable. Both its statement and its proof are essentially borrowed from \cite[Lemma 4.1, page 5599]{EdF18}.

\begin{lemma}\label{intersectcomp} Let $f$ be a multicritical circle map with bounded combinatorics. Let $c$ be a critical point of $f$, and let $x_0$ be any point in the circle. If $\Delta\in \mathcal{P}_n(c)$ and $\Delta'\in \mathcal{P}_n(x_0)$ are two atoms 
such that $\Delta\cap \Delta'\neq \textrm{\O}$, then $|\Delta|\asymp |\Delta'|$.
\end{lemma}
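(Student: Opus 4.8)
My plan is to reduce the entire statement to two inputs: the real bounds at a critical point (Theorem~\ref{realbounds}) together with its consequence Corollary~\ref{coroRBBT}, and the combinatorial fact that, after transporting everything through the conjugacy $h$ of Theorem~\ref{yoccoztheorem}, \emph{every} atom of \emph{any} level-$n$ dynamical partition has $\mu$-mass equal to $\alpha_n:=\lambda\big([h(x),R_\rho^{q_n}h(x)]\big)=|q_n\rho-p_n|$ or to $\alpha_{n+1}$. This is immediate: since $\mu(A)=\lambda(h(A))$ and $h\circ f=R_\rho\circ h$, one gets $\mu(I_n(x))=\alpha_n$ for \emph{every} $x\in S^1$, and then $f$-invariance of $\mu$ gives $\mu(\Delta)\in\{\alpha_n,\alpha_{n+1}\}$ for every $\Delta\in\mathcal{P}_n(x)$ and every $x$; moreover $h$ carries $\mathcal{P}_n(x)$ onto a partition of $S^1$ into arcs of just these two $\lambda$-lengths. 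I will also use two elementary facts about the $\alpha_n$: first $\alpha_n>2\alpha_{n+2}$, which follows at once from $\alpha_n=a_{n+1}\alpha_{n+1}+\alpha_{n+2}$ together with $\alpha_{n+1}>\alpha_{n+2}$ and $a_{n+1}\ge1$; second $\alpha_n/\alpha_{n+1}=a_{n+1}+\alpha_{n+2}/\alpha_{n+1}\le B+1$, where $B=\sup_n a_n<\infty$ by the bounded-type hypothesis. It is worth noting at the outset that the bounded-type assumption will be used \emph{only} through this last bound and through Corollary~\ref{coroRBBT}.

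First I would prove the inequality $|\Delta'|\le C|\Delta|$. The atom $\Delta'\in\mathcal{P}_n(x_0)$ meets a block of, say, $k$ consecutive atoms of $\mathcal{P}_n(c)$; the innermost $k-2$ of them lie inside $\Delta'$, so $(k-2)\,\alpha_{n+1}\le\mu(\Delta')\le\alpha_n\le(B+1)\,\alpha_{n+1}$, hence $k\le B+3$. By the real bounds for $\mathcal{P}_n(c)$ (Theorem~\ref{realbounds}), any two of these $\le B+3$ consecutive atoms have comparable Euclidean lengths (with a constant depending only on $f$), and $\Delta$ is one of them; since $\Delta'$ is contained in the union of the block, this gives $|\Delta'|\le(B+3)\,C^{B+3}|\Delta|$.

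The reverse inequality $|\Delta|\le C|\Delta'|$ is the heart of the matter, and for it the crucial step is the following containment property: \emph{any interval $W$ with $\mu(W)\ge\alpha_m$ contains a full atom of $\mathcal{P}_{m+2}(c)$}. Indeed $h(W)$ is an arc of $\lambda$-length $\ge\alpha_m>2\alpha_{m+2}$, while every atom of the partition $h(\mathcal{P}_{m+2}(c))$ is an arc of length $\alpha_{m+2}$ or $\alpha_{m+3}$, so of length $\le\alpha_{m+2}$; an arc strictly longer than twice the length of the longest element of a partition always contains one full element of that partition, so $h(W)$ contains a full atom of $h(\mathcal{P}_{m+2}(c))$, and pulling back by $h^{-1}$ proves the claim. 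I would then apply this with $W=\Delta'$ and with $m\in\{n,n+1\}$ chosen so that $\mu(\Delta')=\alpha_m$, obtaining a full atom $\Delta^\ast\in\mathcal{P}_{m+2}(c)$ with $\Delta^\ast\subseteq\Delta'$. Since $\mathcal{P}_{m+2}(c)$ refines $\mathcal{P}_n(c)$, the atom $\Delta^\ast$ sits inside a unique atom $\widetilde{\Delta}\in\mathcal{P}_n(c)$, and Corollary~\ref{coroRBBT}, applied at most three times along the levels $n\le\cdots\le m+2$, gives $|\widetilde{\Delta}|\le C^3\,|\Delta^\ast|\le C^3\,|\Delta'|$. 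Finally, $\widetilde{\Delta}$ meets $\Delta'$ (it contains $\Delta^\ast$), so $\widetilde{\Delta}$ belongs to the block of $\le B+3$ consecutive atoms of $\mathcal{P}_n(c)$ considered above, to which $\Delta$ also belongs; by the real bounds, $|\Delta|\le C^{B+3}\,|\widetilde{\Delta}|\le C^{B+6}\,|\Delta'|$. Combining the two inequalities yields $|\Delta|\asymp|\Delta'|$ with a constant depending only on $f$. (The finitely many levels $n<n_0(f)$ involve only finitely many atoms of positive length and are trivial.)

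I expect the containment property to be the only point with genuine content: it is precisely where the \emph{a priori} uncontrolled geometry of $\mathcal{P}_n(x_0)$ is tamed by comparison, through the conjugacy $h$, with the uniformly controlled geometry of the dynamical partitions based at the critical point $c$. Everything else is bookkeeping; the one thing to watch is the index arithmetic around $m$, $m+1$, $m+2$, and the observation that $\alpha_m>2\alpha_{m+2}$ requires no hypothesis on the combinatorics, so that the only uses of bounded type are the two already flagged.
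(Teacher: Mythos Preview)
Your argument is correct, but it is genuinely different from the paper's. The paper proceeds by a three-case split according to the long/short types of $\Delta$ and $\Delta'$, and in each case uses Lemma~\ref{lema33} (comparing $|x-f^{-q_n}(x)|$ with $|f^{q_n}(x)-x|$) together with the monotonicity of $f^{q_n}$ to trap one interval inside the union of the other and a single dynamical translate, thereby getting comparability directly; Corollary~\ref{coroRBBT} enters only in the mixed long/short case. Your route instead passes through the topological conjugacy $h$ of Theorem~\ref{yoccoztheorem}: you exploit that every atom of any level-$n$ partition has $\mu$-mass exactly $\alpha_n$ or $\alpha_{n+1}$, bound the number of $\mathcal{P}_n(c)$-atoms meeting $\Delta'$ by $B+3$ via the mass inequality, and for the reverse inequality use the pigeonhole-type containment $\mu(W)\ge\alpha_m\Rightarrow W$ contains a full $\mathcal{P}_{m+2}(c)$-atom. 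This avoids the case analysis entirely and makes the role of bounded type very transparent (only in the bound $\alpha_n/\alpha_{n+1}\le B+1$ and in Corollary~\ref{coroRBBT}); the price is that you invoke the conjugacy $h$, a somewhat heavier input than Lemma~\ref{lema33}, and that your constants grow like $C^{B+O(1)}$ rather than being independent of $B$ as in the paper's long/long and short/short cases.
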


Note that Theorem \ref{realboundsBT} follows at once by combining Theorem \ref{realbounds} with Lemma \ref{intersectcomp} (we remark that Lemma \ref{intersectcomp} will also be used in the proof of Proposition \ref{lemmanegsch} below). During the proof of Lemma \ref{intersectcomp} we will use the following fact, which is \cite[Lemma 3.3, page 5593]{EdF18}.

\begin{lemma}\label{lema33} There exists a constant  $C> 1$, depending only on $f$, such that for all\, $n\geq 0$ and all\, $x \in S^{1}$ we have:$$C^{-1}\,\big|x- f^{-q_{n}}(x)\big|\leq\big|f^{q_{n}}(x)-x\big|\leq C\,\big|x- f^{-q_{n}}(x)\big|\,.$$
\end{lemma}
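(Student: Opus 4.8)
The plan is to use the geometry of the dynamical partitions of a \emph{critical} point of $f$, where the real bounds (Theorem~\ref{realbounds}) are available, and to rule out any pathological stretching of $f^{q_n}$ near $x$ by invoking the Cross-Ratio Inequality.

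\emph{Setup.} Only finitely many levels $n$ lie below the threshold $n_0=n_0(f)$ of Theorem~\ref{realbounds}; for each of them the functions $x\mapsto|f^{q_n}(x)-x|$ and $x\mapsto|f^{-q_n}(x)-x|$ are continuous and strictly positive on the circle (the rotation number being irrational), hence bounded above and below, so those levels are harmless. Fix $n\ge n_0$, fix a critical point $c_0$ of $f$, and let $\Delta\in\mathcal{P}_n(c_0)$ be an atom containing $x$ (atom boundaries may be ignored, both sides of the inequality being continuous in $x$). Transporting the combinatorics of $\mathcal{P}_n(c_0)$ to that of the rigid rotation via Yoccoz's conjugacy (Theorem~\ref{yoccoztheorem}), one sees that $f^{q_n}(\Delta)$ is contained in an atom $A^{+}$ of $\mathcal{P}_n(c_0)$ adjacent to $\Delta$, that $f^{-q_n}(\Delta)$ is contained in the opposite neighbour $A^{-}$, and, crucially, that $f^{q_n}$ \emph{stretches $\Delta$ across $A^{+}$}: it maps the endpoint $e$ that $\Delta$ shares with $A^{+}$ onto the endpoint $e^{*}$ of $A^{+}$ not shared with $\Delta$. (If $\Delta$ is a long atom then $f^{q_n}(\Delta)=A^{+}$; if $\Delta$ is short then $f^{q_n}(\Delta)\subsetneq A^{+}$; in all cases only a bounded number of atoms is involved, the ``wrap-around'' indices being the usual harmless exception.) By the real bounds, $|A^{-}|\asymp|\Delta|\asymp|A^{+}|$; set $L:=|\Delta|$.

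\emph{Upper bounds.} Since $x\in\Delta$, $f^{q_n}(x)\in A^{+}$ and $\Delta,A^{+}$ are adjacent, the arc $[x,f^{q_n}(x)]$ lies in $\Delta\cup A^{+}$, so $|f^{q_n}(x)-x|\le|\Delta|+|A^{+}|\lesssim L$; symmetrically $|f^{-q_n}(x)-x|\lesssim L$. It therefore suffices to prove the matching lower bounds, since together with the above they give $|f^{q_n}(x)-x|\asymp L\asymp|f^{-q_n}(x)-x|$, which is the lemma.

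\emph{The lower bound.} I claim that $|f^{q_n}(y)-y|$ is bounded below by a fixed multiple of the length of the atom of $\mathcal{P}_n(c_0)$ containing $y$, for every $y\in S^1$; applied to $y=x$ and to $y=f^{-q_n}(x)$ (whose surrounding atom is $A^{-}$, of size $\asymp L$) this yields both remaining bounds. Write $\Delta=[p,e]$ with $e$ shared with $A^{+}$, so that $x\le e\le f^{q_n}(x)$ and $|f^{q_n}(x)-x|=|x-e|+|e-f^{q_n}(x)|$; put $q:=f^{q_n}(p)$, so $f^{q_n}(\Delta)=[q,e^{*}]\subseteq A^{+}=[e,e^{*}]$ and $f^{q_n}(A^{-})\cup f^{q_n}(\Delta)=[p,e^{*}]$. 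Fix a small constant $\delta_0$. If $|f^{q_n}(\Delta)|\le(1-\delta_0)|A^{+}|$, then $q$, and hence all of $f^{q_n}(\Delta)$, lies at distance $\ge\delta_0|A^{+}|\gtrsim L$ from $e$, so already $|f^{q_n}(x)-x|\ge|e-f^{q_n}(x)|\gtrsim L$. Otherwise $|f^{q_n}(\Delta)|\asymp L$ and $|q-e|<\delta_0|A^{+}|$. Assume, for contradiction, that $|f^{q_n}(x)-x|<\varepsilon L$ for a small $\varepsilon$; then $|x-e|<\varepsilon L$, so $M:=[p,x]$ has $|M|\asymp L$, and $|e-f^{q_n}(x)|<\varepsilon L$. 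With $T:=A^{-}\cup\Delta$ (so $M\Subset T$) a direct computation from the real bounds gives $[M,T]\lesssim\varepsilon$, while $f^{q_n}(T)=[p,e^{*}]$ is an interval all of whose relevant pieces have length $\asymp L$, whence $[f^{q_n}(M),f^{q_n}(T)]\gtrsim 1$ once $\delta_0,\varepsilon$ are small. Thus $\crd(f^{q_n};M,T)=[f^{q_n}(M),f^{q_n}(T)]/[M,T]\gtrsim\varepsilon^{-1}$. On the other hand, by the chain rule $\crd(f^{q_n};M,T)=\prod_{i=0}^{q_n-1}\crd(f;f^{i}(M),f^{i}(T))$, and each $f^{i}(T)$ is a union of two adjacent atoms of $\mathcal{P}_n(c_0)$, so the family $\{f^{i}(T):0\le i<q_n\}$ has multiplicity of intersection bounded by an absolute constant $m_0$; the Cross-Ratio Inequality then gives $\crd(f^{q_n};M,T)\le C(f)^{m_0}$. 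For $\varepsilon$ small enough this is a contradiction, so $|f^{q_n}(x)-x|\ge\varepsilon_0 L$ for some $\varepsilon_0=\varepsilon_0(f)>0$.

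\emph{Main obstacle.} The analytic content is elementary: if $f^{q_n}$ brought $x$ abnormally close to itself it would have to contract an interval comparable to the surrounding atom of $\mathcal{P}_n(c_0)$ down to a microscopic one, which the Cross-Ratio Inequality forbids. The real labour is the combinatorics of the \emph{Setup} step — pinning down the neighbours $A^{\pm}$, verifying the ``stretching across'' property and how it differs for long versus short atoms, treating the wrap-around indices, and checking the bounded multiplicity of $\{f^{i}(T)\}$. I expect this bookkeeping, and in particular the short-atom case (somewhat more delicate than the long-atom case sketched here), to be the main difficulty, the rest being routine.
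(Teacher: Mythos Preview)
The paper does not actually prove this lemma: it is quoted verbatim from \cite[Lemma~3.3]{EdF18} and used as a black box in the proof of Lemma~\ref{intersectcomp}. So there is no ``paper's own proof'' to compare against; your task is therefore to produce a self-contained argument, and the Cross-Ratio Inequality route you propose is both natural and essentially correct. Two remarks.

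First, your ``stretching across'' claim ($f^{q_n}(e)=e^*$, the far endpoint of $A^+$) is not literally true in all cases. For a long atom $\Delta=I_n^i$ one always has $f^{q_n}(p)=e$ (so $q=e$ and you are automatically in your Case~2), but when $i\ge q_{n+1}-q_n$ the neighbour $A^+$ is the \emph{short} atom $I_{n+1}^{\,i+q_n-q_{n+1}}$ and $f^{q_n}(e)$ lands \emph{past} $A^+$, inside the next atom $A^{++}$. This is precisely the ``wrap-around'' you flag, and it is indeed harmless for the CRI estimate: what you actually need is that $f^{q_n}(T)=[f^{q_n}(p'),f^{q_n}(e)]$ has its left piece $f^{q_n}(A^-)$ and its right piece $[f^{q_n}(x),f^{q_n}(e)]$ both of size $\asymp L$, and this follows from the real bounds in every sub-case (long/short $A^-$, wrap-around or not). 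So the analytic core survives, but the specific identity $f^{q_n}(T)=[p,e^*]$ you wrote does not hold in general and should be replaced by the softer statement just described.

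Second, contrary to your closing expectation, the short-atom case is in fact the cleanest: there $A^+$ is long, $A^-$ is long, and $f^{q_n}(p')=p$ holds on the nose, so your computation of $f^{q_n}(T)$ is exact. The genuinely fiddly case is the long atom with a short neighbour (either $A^+$ or $A^-$), where the identities degrade to comparabilities. The bounded-multiplicity check for $\{f^i(T):0\le i<q_n\}$ goes through via the conjugacy to the rotation: any two orbit points $R^{-i}(y),R^{-j}(y)$ with $0<|i-j|<q_n$ are at least $\|q_{n-1}\rho\|\ge\|q_n\rho\|$ apart, while $h(T)$ has length at most $2\|q_n\rho\|$, giving multiplicity $\le 3$.
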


\begin{proof}[Proof of Lemma \ref{intersectcomp}] There are three cases to consider, according to the types of atoms we have: long/long, long/short, and short/short. More precisely, we have the following three cases.
\begin{enumerate}
\item[(i)] We have $\Delta=I_n^i(c)$ and $\Delta'=I_n^j(x_0)$, where $0\leq i,j <q_{n+1}$. 
Here we may assume that $f^j(x_0)\in \Delta= [f^i(c),f^{i+q_n}(c)]$, and then $f^{i+q_n}(c)\in \Delta'=[f^j(x_0),f^{j+q_n}(x_0)]$. Using the monotonicity of $f^{q_n}$, we see that $\Delta'\subset \Delta\cup f^{q_n}(\Delta)$. Applying Lemma \ref{lema33} to $x=f^{i+q_n}(c)$, we see that $\Delta=[f^{-q_n}(x),x]$ and $f^{q_n}(\Delta)=[x, f^{q_n}(x)]$ satisfy $|f^{q_n}(\Delta)|\leq C|\Delta|$, and from this it follows that $|\Delta'|\leq (1+C)|\Delta|$. Conversely, we also have $\Delta\subset f^{-q_n}(\Delta')\cup \Delta'$. Again applying Lemma \ref{lema33}, this time to $x=f^j(x_0)$, we deduce just as before that $|f^{-q_n}(\Delta')|\leq C|\Delta'|$, and therefore $|\Delta|\leq (1+C)|\Delta'|$. Hence $\Delta$ and $\Delta'$ are comparable in this case.
\item[(ii)] We have $\Delta=I_n^i(c)$ and $\Delta'=I_{n+1}^j(x_0)$, where $0\leq i <q_{n+1}$ and $0\leq j<q_n$. If $f^j(x_0) \in I_{n+2}^i(c)$, then $\Delta'$ intersects the interval $I_{n+1}^i(c)$, and since $\Delta'$ and $I_{n+1}^i(c)$ are long intervals of the partitions $\mathcal{P}_{n+1}(x_0)$ and $\mathcal{P}_{n+1}(c)$ respectively, case (i) above tells us that $|\Delta'|\asymp\big|I_{n+1}^{i}(c)\big|$. Moreover, by Theorem \ref{realbounds} we have $\big|I_{n+1}^{i}(c)\big|\asymp|\Delta|$ and then we deduce that $\Delta'$ is comparable to $\Delta$ in this sub-case. On the other hand, if $f^j(x_0) \notin I_{n+2}^i(c)$, then there exists $\ell\in\{0,1,...,a_{n+1}-1\}$ such that $\Delta'\cap I_{n+1}^{\ell\,q_{n+1}+i+q_n}(c) \neq \textrm{\O}$. Since $I_{n+1}^{\ell\,q_{n+1}+i+q_n}(c)$ is an atom of $\mathcal{P}_{n+1}(c)$ contained in $\Delta\in\mathcal{P}_{n}(c)$, we have by Corollary \ref{coroRBBT} that $\big|I_{n+1}^{\ell\,q_{n+1}+i+q_n}(c)\big|\asymp|\Delta|$. Now, since $\Delta'$ also belongs to $\mathcal{P}_{n+1}(x_0)$, case (i) above tells us that $|\Delta'|\asymp\big|I_{n+1}^{\ell\,q_{n+1}+i+q_n}(c)\big|$, and therefore $\Delta'$ is comparable to $\Delta$ in this sub-case too.
\item[(iii)] We have $\Delta=I_{n+1}^i(c)$ and $\Delta'=I_{n+1}^j(x_0)$, where $0\leq i,j <q_{n}$. 
This case is entirely analogous to case (i).
\end{enumerate}
\end{proof}

\subsection{The negative Schwarzian property}\label{secapschw} The remainder of this appendix is devoted to establish the following two facts.

\begin{prop}[The $C^1$ bounds]\label{lemmaC1bounds} For any given multicritical circle map $f$ with bounded combinatorics there exists a constant $K=K(f)>1$ such that the following holds. For any given $x_0 \in S^1$ and $n\in\nt$ let\, $I_n=I_n(x_0)$. Then $Df^k(x)\leq K\,\dfrac{\big|f^{k}(I_n)\big|}{|I_n|}$ for all $x \in I_n$ and all $k\in\{0,1,...,q_{n+1}\}$. Moreover $\big\|f^{q_{n+1}}\big\|_{C^1(I_n)} \leq K$. Likewise, if\, $I_{n+1}=I_{n+1}(x_0)$, then $Df^k(x)\leq K\,\dfrac{\big|f^{k}(I_{n+1})\big|}{|I_{n+1}|}$ for all $x \in I_{n+1}$ and all $k\in\{0,1,...,q_{n}\}$, and moreover $\big\|f^{q_{n}}\big\|_{C^1(I_{n+1})} \leq K$.
\end{prop}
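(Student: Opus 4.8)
The plan is to prove the single inequality $Df^{k}(x)\le K\,|f^{k}(I_{n})|/|I_{n}|$ for all $x\in I_{n}$ and all $0\le k\le q_{n+1}$; the bound $\|f^{q_{n+1}}\|_{C^{1}(I_{n})}\le K$ then follows by taking $k=q_{n+1}$ and invoking the comparability $|f^{q_{n+1}}(I_{n})|\asymp|I_{n}|$ (a consequence of Theorem~\ref{realboundsBT}), while the $I_{n+1}$ half of the statement is exactly the $I_{n}$ half applied with $n$ replaced by $n+1$, since $I_{n+1}(x_{0})$ is a long atom of $\mathcal{P}_{n+1}(x_{0})$ and $q_{n}\le q_{n+2}$. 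For the finitely many levels $n$ below the threshold of Theorem~\ref{realboundsBT} the inequality is immediate from compactness of $S^{1}$ (every quantity involved is continuous in $x_{0}$, and the relevant interval lengths are bounded away from $0$), so we may assume $n$ as large as convenient.

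So fix $x_{0}$, a large $n$, set $\Delta=I_{n}(x_{0})$, and let $\widehat\Delta\supset\Delta$ be the union of $\Delta$ with its two neighbouring atoms in $\mathcal{P}_{n}(x_{0})$. We first record three properties of $\widehat\Delta$, all consequences of the real bounds (Theorem~\ref{realboundsBT}) and of the combinatorics of the dynamical partitions: (1) for each $0\le j\le q_{n+1}$ the interval $f^{j}(\widehat\Delta)$ contains a $\tau$-scaled neighbourhood of $f^{j}(\Delta)$ for a fixed $\tau=\tau(f)>0$ --- indeed $f^{j}(\Delta)=I_{n}(f^{j}(x_{0}))$, and the two components of $f^{j}(\widehat\Delta)\setminus f^{j}(\Delta)$ are the atoms of $\mathcal{P}_{n}(f^{j}(x_{0}))$ adjacent to it, hence comparable to it; (2) $\sum_{j=0}^{q_{n+1}-1}|f^{j}(\widehat\Delta)|\le \ell_{0}$ for an absolute constant $\ell_{0}$, because this orbit splits into a bounded number of subfamilies each consisting of pairwise disjoint atoms (the long atoms of suitable dynamical partitions); (3) for $n$ large, $f^{j}(\widehat\Delta)$ is shorter than the non-flat neighbourhoods of the critical points and contains at most one of them.

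Now the key point: $f^{k}|_{I_{n}}$ is in general \emph{not} a diffeomorphism --- it may run through critical points --- so Koebe's principle cannot be applied in one stroke. Call $i$ a \emph{critical time} if $f^{i}(\widehat\Delta)$ meets a critical point, and let $i_{1}<\cdots<i_{p}$ be the critical times in $\{0,\dots,k-1\}$. By the chain rule, $Df^{k}$ at a point of $\Delta$ is a product of at most $2p+1$ factors, alternating between a single $Df$ evaluated at each critical time (the \emph{passages}) and the iterates $Df^{i_{r+1}-i_{r}-1}$ over the intervening critical-point-free stretches (the \emph{blocks}; here $i_{0}=-1$ and $i_{p+1}=k$). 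Since the long atoms of any dynamical partition are pairwise disjoint, each critical point is a critical time for only boundedly many indices, so $p\le P=P(f)$. For a block, Lemma~\ref{koebe} applied with $T=f^{i_{r}+1}(\widehat\Delta)$ and $M=f^{i_{r}+1}(\Delta)$ --- its hypotheses being exactly properties (1) and (2) above --- bounds the distortion of the block on $f^{i_{r}+1}(\Delta)$ by $K_{1}=K_{1}(f)$; together with the mean value theorem this gives $\sup_{f^{i_{r}+1}(\Delta)}D(\mathrm{block})\le K_{1}\,|f^{i_{r+1}}(\Delta)|/|f^{i_{r}+1}(\Delta)|$. For a passage, $f^{i_{r}}(\Delta)$ is a short interval lying within a distance comparable to $|f^{i_{r}}(\Delta)|$ of some critical point $c$ of criticality $d$; writing $f$ near $c$ in the normal form of Definition~\ref{defmccm}, $f(x)=f(c)+\phi(x)\,|\phi(x)|^{d-1}$, an elementary computation yields $\sup_{f^{i_{r}}(\Delta)}Df\asymp|f^{i_{r}+1}(\Delta)|/|f^{i_{r}}(\Delta)|$, with a constant $K_{2}=K_{2}(f)$ depending only on the largest criticality and on the $C^{3}$ geometry of $f$ at its critical points. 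Multiplying all $\le 2P+1$ estimates, the interval-length ratios telescope to $|f^{k}(\Delta)|/|\Delta|$, giving $\sup_{\Delta}Df^{k}\le \max(K_{1},K_{2})^{2P+1}\,|f^{k}(\Delta)|/|\Delta|$, which is the assertion with $K=\max(K_{1},K_{2})^{2P+1}$.

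The main obstacle is exactly this lack of a single diffeomorphism: one must (a) bound how often the orbit of a neighbourhood of $I_{n}$ meets the critical set --- where the disjointness of the long atoms of the dynamical partitions is the crucial input --- and (b) estimate $Df$ across one critical passage, which is the normal-form computation above and the only step where non-flatness, rather than just the real bounds, is used. Everything else is routine use of the real bounds and of Koebe's principle.
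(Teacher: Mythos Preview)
Your proof is correct and follows the same architecture as the paper's: enlarge $I_n(x_0)$ to the union $T_n$ with its two $\mathcal{P}_n(x_0)$-neighbours (your $\widehat\Delta$ is exactly the paper's $T_n=I_{n+1}(x_0)\cup I_n(x_0)\cup f^{q_n}(I_n(x_0))$), split $f^k$ into diffeomorphic blocks separated by at most $3N$ critical passages, control the blocks by Koebe's principle and the passages by the non-flatness normal form, then telescope the length ratios. The one substantive difference is how you obtain the space property (your item (1)): you observe that $f^j(I_n(x_0))=I_n(f^j(x_0))$ and that the two flanking pieces of $f^j(\widehat\Delta)$ are precisely the atoms of $\mathcal{P}_n(f^j(x_0))$ adjacent to it, so comparability for \emph{every} $j$ follows directly from Theorem~\ref{realboundsBT} applied at the shifted basepoint $f^j(x_0)$. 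The paper instead proves this as a separate lemma (Lemma~\ref{lemaspaceapp}): it checks the space at the extreme times $j=0$ and $j=q_{n+1}$ from the real bounds and then propagates it to intermediate $j$ via the Cross-Ratio Inequality. Your shortcut is legitimate (Theorem~\ref{realboundsBT} is established earlier in the appendix, independently of the $C^1$ bounds) and is a genuine simplification, since it makes the Cross-Ratio Inequality unnecessary at this step.
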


\begin{prop}[The negative Schwarzian property]\label{lemmanegsch} For any given multicritical circle map $f$ with bounded combinatorics there exists $n_0=n_0(f)\in\nt$ such that for all $x_0 \in S^1$ and all $n \geq n_0$ we have\[ Sf^{q_{n+1}}(x)<0\quad\text{for all $x \in I_{n}(x_0)$ regular point of $f^{q_{n+1}}$.}
 \]
Likewise, we have 
\[ Sf^{q_{n}}(x)<0\quad\text{for all $x \in I_{n+1}(x_0)$ regular point of $f^{q_{n}}$}.
\]
\end{prop}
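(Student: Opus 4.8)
The plan is to prove the first assertion --- the one concerning $f^{q_{n+1}}$ on a long atom $I_n(x_0)$ --- the second one, about $f^{q_n}$ on a short atom $I_{n+1}(x_0)$, being entirely parallel. Since $\bigcup_{x_0\in S^1}I_n(x_0)=S^1$, this first assertion amounts to saying: for every $n\ge n_0$, $Sf^{q_{n+1}}(x)<0$ at every regular point $x$ of $f^{q_{n+1}}$. Everything rests on the cocycle rule for the Schwarzian of a composition, which here reads $Sf^{q_{n+1}}(x)=\sum_{j=0}^{q_{n+1}-1}Sf(f^j(x))\,(Df^j(x))^2$. First I would record the behaviour of $Sf$ near a critical point: on a fixed neighbourhood $W_\ell$ of each critical point $c_\ell$ of $f$, writing $f=T\circ P_{d_\ell}\circ\phi_\ell$ as in Definition \ref{defmccm} (with $P_d(u)=u|u|^{d-1}$, $T$ a translation, $\phi_\ell$ a $C^3$ diffeomorphism vanishing at $c_\ell$) and using $SP_d(u)=-\tfrac{1}{2}(d^2-1)u^{-2}$, one gets $Sf(y)=-\tfrac{1}{2}(d_\ell^2-1)\phi_\ell'(y)^2\phi_\ell(y)^{-2}+S\phi_\ell(y)$; after shrinking $W_\ell$ this is strictly negative, indeed $Sf(y)\le-\gamma\,\mathrm{dist}(y,c_\ell)^{-2}$ for some $\gamma>0$. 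Away from $\bigcup_\ell W_\ell$ the map $f$ is a genuine $C^3$ diffeomorphism with $|Sf|\le M_0$.

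Next I would split the cocycle sum according to whether $f^j(x)$ lies in some $W_\ell$ or not, and estimate the two parts separately. For the ``regular'' indices, the $C^1$ bounds of Proposition \ref{lemmaC1bounds} give $Df^j(x)\le K\,|I_n^j(x_0)|/|I_n(x_0)|$, and the real bounds for bounded combinatorics (Theorem \ref{realboundsBT}) give $|I_n^j(x_0)|\le C\,|I_n(x_0)|$, so each factor is $\le(KC)^2$ and this part of the sum is at most $M_0(KC)^2q_{n+1}$ in absolute value. For the ``critical'' indices the contribution is negative, and the key point is that for $n$ large it already overwhelms $q_{n+1}$ thanks to a single huge term: by Theorem \ref{realboundsBT} the long atoms of $\mathcal{P}_n(x_0)$ all have size $\asymp|I_n(x_0)|\asymp q_{n+1}^{-1}$ and form an $O(q_{n+1}^{-1})$-net of the circle, so for each $c_\ell$ there is $j_\ell<q_{n+1}$ with $f^{j_\ell}(x)\in I_n^{j_\ell}(x_0)$ at distance $O(q_{n+1}^{-1})$ of $c_\ell$; this forces $f^{j_\ell}(x)\in W_\ell$ and $Sf(f^{j_\ell}(x))\le-\gamma' q_{n+1}^2$. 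Provided the derivative factor $(Df^{j_\ell}(x))^2$ is bounded below by a constant depending only on $f$, this yields $Sf^{q_{n+1}}(x)\le-\gamma'' q_{n+1}^2+M_0(KC)^2q_{n+1}<0$ once $n\ge n_0$. As a variant that dispenses with proving this for every $n$ directly, the recursion $q_{n+1}=a_nq_n+q_{n-1}$ gives $f^{q_{n+1}}=f^{q_{n-1}}\circ(f^{q_n})^{a_n}$, and expanding the Schwarzian cocycle of this composition (the intermediate points being regular for the respective iterates, as is readily checked) shows that \emph{if} $Sf^{q_{n-1}}<0$ and $Sf^{q_n}<0$ at all regular points, then every term of the resulting sum is $\le0$ and at least one is $<0$; so one only needs the estimate above at two consecutive ``seed'' levels.

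The main obstacle is exactly the lower bound on $(Df^{j_\ell}(x))^2$ used above: a priori $Df^{j_\ell}(x)$ could be tiny if the orbit of $x$ grazes another critical point before time $j_\ell$. I would control this by passing to the maximal subinterval $K\subset I_n(x_0)$ containing $x$ on which $f^{q_{n+1}}$ is monotone --- so that no intermediate image $f^i(K)$, $i<q_{n+1}$, contains a critical point of $f$ --- and then invoking Koebe's distortion principle (Lemma \ref{koebe}) together with the bounded-combinatorics real bounds (and Lemma \ref{intersectcomp}, to compare the geometry of $\mathcal{P}_n(x_0)$ with that of the dynamical partitions of the critical points) to show that $Df^{j_\ell}$ is comparable, on $K$, to $|f^{j_\ell}(K)|/|K|$ up to constants depending only on $f$; alternatively, one observes that an anomalously small $Df^{j_\ell}(x)$ is produced by some earlier iterate landing extremely close to a critical point, which only makes the corresponding term of the cocycle sum more negative, so the conclusion survives in that case too. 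Carrying out this derivative control uniformly in $n$, and keeping track of the ``regular versus critical'' bookkeeping, is where essentially all the work lies; granting it, both arguments above close, and the same reasoning on $I_{n+1}(x_0)$ with $f^{q_n}$ in place of $f^{q_{n+1}}$ delivers the second assertion.
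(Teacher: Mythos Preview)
Your overall architecture --- write $Sf^{q_{n+1}}(x)$ via the cocycle rule, split the sum into ``near a critical point'' versus ``away from critical points'', use the power-law estimate $Sf(y)\le -\gamma\,\mathrm{dist}(y,c_\ell)^{-2}$ for the former and the $C^1$ bounds of Proposition~\ref{lemmaC1bounds} for the latter --- is exactly the paper's strategy. Your bookkeeping on the regular part, bounding it by $M_0(KC)^2q_{n+1}$, is cruder than the paper's (which uses $\sum_k|I_n^k|\le 1$ to get $MK^2\delta_n/|I_n|^2$) but for bounded combinatorics the two estimates are of the same order, so this causes no trouble.

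The real divergence is in how you handle the dominant negative term, and here there is a genuine gap that you yourself flag but do not close. You pick a time $j_\ell$ with $f^{j_\ell}(x)$ at distance $O(q_{n+1}^{-1})$ from $c_\ell$ and then need $(Df^{j_\ell}(x))^2$ bounded below uniformly; both of your proposed fixes are incomplete. Koebe on the maximal monotonicity interval $K\ni x$ gives $Df^{j_\ell}(x)\asymp |f^{j_\ell}(K)|/|K|$, but nothing you have said bounds this ratio from below. The ``alternatively'' argument --- that a small $Df^{j_\ell}(x)$ forces an earlier, even more negative term --- is the right intuition but is not a proof: one would have to track how the smallness of $Df(f^i(x))$ at an earlier close encounter propagates through the product $Df^{j_\ell}(x)=\prod_k Df(f^k(x))$ and compare it quantitatively with the gain in $|Sf(f^i(x))|$, and this balancing act is nontrivial when several close encounters can occur.

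The paper sidesteps the whole issue with a single observation: take $j$ to be the \emph{first} index for which $f^{j}(I_n(x_0))$ meets $J_n(c_i)$ for some critical point $c_i$. Then $f^k(I_n(x_0))$ contains no critical point for $k<j$, so $f^{j}|_{I_n(x_0)}$ is a diffeomorphism and Koebe (with the space supplied in the proof of Proposition~\ref{lemmaC1bounds}) gives bounded distortion, i.e.\ $Df^j(x)/Df^j(y)\in[C_1^{-1},C_1]$ for all $x,y\in I_n(x_0)$. One never needs an absolute lower bound on $Df^j(x)$: combining $Sf(f^j(x))\le -K_0/|f^j(x)-c_0|^2\le -K_0/(C_0^2|f^j(I_n(x_0))|^2)$ with the mean-value identity $|f^j(I_n(x_0))|=Df^j(y)\,|I_n(x_0)|$ collapses the product $Sf(f^j(x))\,[Df^j(x)]^2$ to $-K_0\,[Df^j(x)/Df^j(y)]^2/(C_0^2|I_n(x_0)|^2)$, which is $\le -K_0/(C_0^2C_1^2|I_n(x_0)|^2)$. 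This one choice of $j$ is the missing idea in your write-up; with it, the ``main obstacle'' you identify simply evaporates. Your induction remark (propagating negativity via $f^{q_{n+1}}=f^{q_{n-1}}\circ(f^{q_n})^{a_n}$) is correct and pleasant, but the paper does not need it, since the direct estimate already works for every $n\ge n_0$.
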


Note that Proposition \ref{lemmanegsch} immediately implies Proposition \ref{propnegschw}. We remark that both Proposition \ref{lemmaC1bounds} and Proposition \ref{lemmanegsch} are well known in the case when $x_0$ is a critical point of $f$, in which case they hold true for any irrational rotation number: see \cite[Appendix A]{dFdM99} for the case of critical circle maps with a single critical point, and see \cite[Sections 3 and 4]{EdFG18} for the case of multicritical circle maps. Our goal in this appendix is to generalize both results to the case when $x_0$ is a regular point of a multicritical circle map with bounded combinatorics. In the proof of Proposition \ref{lemmaC1bounds} we adapt the exposition in \cite[pages 849-851]{EdFG18}, whereas in the proof of Proposition \ref{lemmanegsch} we adapt the exposition in \cite[pages 380-381]{dFdM99}.

\begin{proof}[Proof of Proposition \ref{lemmaC1bounds}] We give the proof only for the case $x \in I_n(x_0)$ (the other case being entirely analogous). We consider the three intervals $L_{n}=I_{n+1}(x_0)$, $R_{n}= f^{q_{n}}\big(I_n(x_0)\big)$ and $T_{n}=L_{n} \cup I_{n}(x_0) \cup R_{n}$. From now on we suppress the point $x_0$ to simplify the notation. We need to establish two preliminary facts.

\begin{lemma}\label{lemaspaceapp} There exists a constant $\tau>0$ (depending only on $f$) such that$$|L_n^j|>\tau|I_n^j|\quad\mbox{and}\quad|R_n^j|>\tau|I_n^j|$$for each $j \in \{ 0, \cdots, q_{n+1} \}$ and for all $n\in\nt$.
\end{lemma}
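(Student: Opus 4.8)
The plan is to deduce Lemma~\ref{lemaspaceapp} directly from the real bounds for an arbitrary base point, Theorem~\ref{realboundsBT} (which has already been established, and is where the bounded combinatorics hypothesis is used). The only elementary ingredient needed is the equivariance of the dynamical partitions: since $I_m(f(y))=f\big(I_m(y)\big)$ for every $y\in S^1$ and every $m\ge 0$, one gets by iteration $I_m\big(f^j(y)\big)=f^j\big(I_m(y)\big)$, and more generally $I_m^i\big(f^j(y)\big)=I_m^{i+j}(y)$.

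First I would record the following combinatorial fact: for any $y\in S^1$ the three intervals $I_{n+1}^0(y)$, $I_n^0(y)$ and $I_n^{q_n}(y)$ are consecutive atoms of $\mathcal{P}_n(y)$. Indeed $I_{n+1}^0(y)=I_{n+1}(y)$ is the short atom of $\mathcal{P}_n(y)$ abutting $y$; $I_n^0(y)=I_n(y)$ is the long atom of $\mathcal{P}_n(y)$ abutting $y$ on the other side; and $I_n^{q_n}(y)=f^{q_n}\big(I_n(y)\big)$ is again a long atom of $\mathcal{P}_n(y)$ (here one uses $q_n<q_{n+1}$), sharing the endpoint $f^{q_n}(y)$ with $I_n^0(y)$. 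Being distinct atoms of a partition sharing an endpoint, $I_{n+1}^0(y)$ and $I_n^0(y)$ are adjacent, and so are $I_n^0(y)$ and $I_n^{q_n}(y)$.

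Applying this with $y=f^j(x_0)$ and using the equivariance above, we obtain $L_n^j=I_{n+1}^0\big(f^j(x_0)\big)$, $I_n^j=I_n^0\big(f^j(x_0)\big)$ and $R_n^j=I_n^{q_n}\big(f^j(x_0)\big)$. Hence, for every $n$ and every $j\in\{0,1,\dots,q_{n+1}\}$, both $L_n^j$ and $R_n^j$ are atoms of $\mathcal{P}_n\big(f^j(x_0)\big)$ adjacent to $I_n^j$. By Theorem~\ref{realboundsBT} there is $C=C(f)>1$ with $C^{-1}|I_n^j|\le|L_n^j|\le C|I_n^j|$ and $C^{-1}|I_n^j|\le|R_n^j|\le C|I_n^j|$, so the lemma holds with $\tau=C^{-1}$.

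The argument has no real obstacle; the only points that require care are the combinatorial bookkeeping in the second paragraph (notably that $R_n=f^{q_n}(I_n(x_0))$ is a genuine long atom of $\mathcal{P}_n(x_0)$, which forces the convenient cyclic order), and quoting the real bounds in the form valid at \emph{any} base point rather than only at a critical point---exactly the content of Theorem~\ref{realboundsBT}. If one preferred to avoid Theorem~\ref{realboundsBT}, one could instead mimic \cite[pages 849--851]{EdFG18}: establish the estimate at the endpoints $j=0$ and $j=q_{n+1}$ via Lemma~\ref{lema33} and the real bounds, check that the family $\{f^i(L_n\cup I_n\cup R_n):0\le i<q_{n+1}\}$ has bounded intersection multiplicity, and then propagate the cross-ratio $\big[I_n^j,\,L_n^j\cup I_n^j\cup R_n^j\big]$ inward from $j=q_{n+1}$ using the Cross-Ratio Inequality together with its chain rule. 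The direct argument above is shorter.
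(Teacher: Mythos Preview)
Your argument is correct. The equivariance $f^j\big(I_m(y)\big)=I_m\big(f^j(y)\big)$ reduces the estimate at every $j$ to the adjacency of $I_{n+1}(z)$, $I_n(z)$, $I_n^{q_n}(z)$ in $\mathcal{P}_n(z)$ with $z=f^j(x_0)$, and Theorem~\ref{realboundsBT} then gives the bound uniformly in $j$ and $n$.

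This is a genuinely different and shorter route than the paper's. The paper proceeds exactly along the alternative you sketch at the end: it verifies the comparability at the two endpoints $j=0$ and $j=q_{n+1}$ (using Theorem~\ref{realboundsBT} and Corollary~\ref{coroRBBT}), observes that the family $\big\{f^i(T_n):0\le i<q_{n+1}\big\}$ has intersection multiplicity at most~$3$, and then applies the Cross-Ratio Inequality to propagate the bound from $j=q_{n+1}$ down to each intermediate~$j$. Since the paper already invokes Theorem~\ref{realboundsBT} at the endpoints, your direct application of that theorem at \emph{every} $j$ involves no additional cost; the paper's detour through the Cross-Ratio Inequality is an artifact of adapting the argument from \cite[pages 849--851]{EdFG18}, where the base point is critical and no analogue of Theorem~\ref{realboundsBT} for arbitrary base points is yet available. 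Your proof exploits the fact that in the present bounded-type setting this theorem has already been established, so the cross-ratio machinery becomes unnecessary for this lemma.
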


\begin{proof}[Proof of Lemma \ref{lemaspaceapp}] For $j=0$, observe that the intervals $L_n$, $I_n$ and $R_n$ are adjacent and belong to the dynamical partition $\mathcal{P}_n$, then by Theorem \ref{realboundsBT} they are comparable. Let us prove now that for $j= q_{n+1}$ the three intervals $L_n^{j}$, $I_n^j$ and $R_n^j$ are comparable too. On one hand, the intervals $I_{n+1}$ and $I_{n+1}^{q_{n+1}}$ are adjacent and belong to $\mathcal{P}_{n+1}$, then they are comparable (again by Theorem \ref{realboundsBT}). Moreover $I_{n+1}\subset I_{n}^{q_{n+1}} \subset I_{n+1}\cup I_n$. By Theorem \ref{realboundsBT}, $|I_n|\asymp|I_{n+1}|$ and then $|I_{n}^{q_{n+1}}|\asymp|I_{n+1}^{q_{n+1}}|$, that is:
\begin{equation} \label{fact2}
|L_{n}^{q_{n+1}}| \asymp |I_{n}^{q_{n+1}}|\,.
\end{equation}On the other hand, the intervals $I_{n}$ and $I_{n}^{q_{n}}$ are adjacent and belong to $\mathcal{P}_{n}$, then they are comparable. Moreover:$$I_{n+1}^{q_{n}}\subset I_{n}^{q_n+q_{n+1}}\subset I_n \cup I_{n}^{q_{n}}\,.$$By Corollary \ref{coroRBBT}, we know that $|I_{n+1}^{q_{n}}| \asymp |I_{n}|$ and then $|I_{n}^{q_{n}+q_{n+1}}| \asymp |I_{n}|$. Using again that $I_{n+1}\subset I_n^{q_{n+1}}\subset I_n\cup I_{n+1}$, we conclude from Theorem \ref{realboundsBT} that:
\begin{equation} \label{fact22}
 |R_{n}^{q_{n+1}}|=|I_{n}^{q_{n}+q_{n+1}}| \asymp |I_{n}| \asymp |I_n^{q_{n+1}}|\,.
\end{equation}Therefore, for $j= q_{n+1}$, the three intervals $L_n^{j}$, $I_n^j$ and $R_n^j$ are comparable. Now, let $1 \leq j \leq q_{n+1}-1$. Consider the intervals $|L_{n}^j|, \ |I_{n}^j| , \ |R_{n}^j|$ and their images by the map $f^{q_{n+1}-j}$. Since, by combinatorics, the family $\{ T_{n}, f(T_{n}), \cdots, f^{q_{n+1}-1}(T_{n}) \}$ has intersection multiplicity bounded by $3$, the Cross-Ratio Inequality (see Section \ref{secprel}) implies that there exists a constant $K_0>1$, depending only on $f$, such that
\begin{equation*}
  \dfrac{|L_{n}^{q_{n+1}}|\,|R_{n}^{q_{n+1}}|\,|L_{n}^{j} \cup I_{n}^{j}|\,|I_{n}^{j} \cup R_{n}^{j}|}{|L_{n}^{j}|\, |R_{n}^{j}|\,|L_{n}^{q_{n+1}} \cup I_{n}^{q_{n+1}}|\,|I_{n}^{q_{n+1}} \cup R_{n}^{q_{n+1}}|} \leq K_0\,.
\end{equation*}Using \eqref{fact2} and \eqref{fact22} in the last inequality, we get
\begin{equation*}
 \left( 1 + \dfrac{|I_{n}^j|}{|L_{n}^{j}|} \right) \left( 1 + \dfrac{|I_{n}^j|}{|R_{n}^j|} \right) \leq K\,,
\end{equation*}
and we are done.
\end{proof}

We assume from now on that $n\in\nt$ is large enough so that for all $j\in\{0,...,q_{n+1}\}$ we have $\car( f^{j}(T_{n}) \cap\crit(f)) \leq 1$, where $\car$ denotes the cardinality of a finite set, and $\crit(f)$ is the set of critical points of $f$ (recall that, by minimality, $\big|f^j(T_n)\big|$ goes to zero as $n$ goes to infinity). We say that $j \in \{1, \cdots, q_{n+1} \}$ is a \emph{critical time} if $f^{j}(T_{n}) \cap\crit(f) \neq \textrm{\O}$. Note that $\car( \{\mbox{critical times} \} )\leq 3N$, by combinatorics.

\begin{lemma}\label{regbranches} Let $1 \leq j_1<j_2\leq q_{n+1}$ be two consecutive critical times. Then for all $x \in f^{j_1+1}(I_n)$ we have$$Df^{j_2-j_1-1}(x) \asymp \frac{|f^{j_2}(I_n)|}{|f^{j_1+1}(I_n)|}\,.$$
\end{lemma}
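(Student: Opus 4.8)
The plan is to exploit the fact that between two consecutive critical times $j_1<j_2$, the branch $f^{j_2-j_1-1}$ restricted to $f^{j_1+1}(I_n)$ is a \emph{diffeomorphism with controlled distortion}, so that the derivative at any point is comparable to the average derivative, which is exactly the ratio of lengths $|f^{j_2}(I_n)|/|f^{j_1+1}(I_n)|$. Concretely, first I would set $g = f^{j_2-j_1-1}$ and $U = f^{j_1+1}(T_n)$, $W = f^{j_1+1}(I_n)\subset U$. Since $j_1$ and $j_2$ are consecutive critical times, none of the intervals $f^{j_1+1}(T_n), f^{j_1+2}(T_n),\dots, f^{j_2-1}(T_n)$ contains a critical point of $f$, hence $g|_U$ is a diffeomorphism onto its image $f^{j_2}(T_n)$.

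Next I would produce the distortion bound. Lemma \ref{lemaspaceapp} says that for every index in $\{0,\dots,q_{n+1}\}$, the middle interval $I_n^k$ has a $\tau$-scaled neighbourhood inside $T_n^k$; applying $f^{j_1+1}$ we get that $W$ has a definite scaled neighbourhood inside $U$, and likewise $g(W)=f^{j_2}(I_n)$ has a definite scaled neighbourhood inside $g(U)=f^{j_2}(T_n)$ (using the bound at index $j_2$). Since the pieces $U, f(U),\dots, g^{-1}$-iterates of $U$ — i.e. the intervals $f^{j_1+1}(T_n),\dots,f^{j_2-1}(T_n)$ — are pairwise disjoint (a sub-family of the disjoint family $\{T_n,f(T_n),\dots,f^{q_{n+1}-1}(T_n)\}$), their total length is at most $1$, so the hypothesis $\sum |f^i(U)|\le \ell$ of Koebe's distortion principle (Lemma \ref{koebe}) is satisfied with a universal $\ell$. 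Therefore Koebe's principle yields a constant $K_1=K_1(f)>1$ with
\[
 \frac{1}{K_1}\;\leq\;\frac{Dg(x)}{Dg(y)}\;\leq\;K_1\qquad\text{for all }x,y\in W.
\]

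Finally I would conclude by the mean value theorem: there is $\xi\in W$ with $Dg(\xi)=|g(W)|/|W| = |f^{j_2}(I_n)|/|f^{j_1+1}(I_n)|$, and combining with the distortion bound gives, for every $x\in W=f^{j_1+1}(I_n)$,
\[
 K_1^{-1}\,\frac{|f^{j_2}(I_n)|}{|f^{j_1+1}(I_n)|}\;\leq\; Df^{j_2-j_1-1}(x)\;\leq\; K_1\,\frac{|f^{j_2}(I_n)|}{|f^{j_1+1}(I_n)|},
\]
which is precisely the claimed comparability. The main technical obstacle is verifying carefully that the scaled-neighbourhood hypothesis of Koebe's principle holds at \emph{both} ends of the branch — i.e. that $f^{j_1+1}(I_n)$ sits with definite space inside $f^{j_1+1}(T_n)$ and that $f^{j_2}(I_n)$ sits with definite space inside $f^{j_2}(T_n)$ — which is exactly where Lemma \ref{lemaspaceapp} (hence the real bounds for bounded combinatorics, Theorem \ref{realboundsBT}, and the Cross-Ratio Inequality) is essential; one must also double-check the bookkeeping of which iterates carry a critical point so that the chosen branch really is a diffeomorphism. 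Everything else is a routine application of Koebe plus the mean value theorem.
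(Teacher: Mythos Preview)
Your approach is essentially identical to the paper's: diffeomorphism on $f^{j_1+1}(T_n)$ between consecutive critical times, bounded total length of the iterates of $T_n$, space at the target via Lemma~\ref{lemaspaceapp}, then Koebe plus the mean value theorem. One small correction: the family $\{T_n,f(T_n),\dots,f^{q_{n+1}-1}(T_n)\}$ is \emph{not} pairwise disjoint (recall $T_n=I_{n+1}\cup I_n\cup f^{q_n}(I_n)$), but it has intersection multiplicity at most $3$, so the sum of lengths is bounded by $3$ rather than $1$ --- this changes nothing in the application of Koebe, and note also that Koebe as stated only requires the $\tau$-scaled neighbourhood at the \emph{image} end, so the space at index $j_2$ suffices.
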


\begin{proof}[Proof of Lemma \ref{regbranches}] Note that $f^{j_2-j_1-1}:f^{j_1+1}(T_n) \to f^{j_2}(T_n)$ is a diffeomorphism. Using again that the family $\{ T_{n}, f(T_{n}), \cdots, f^{q_{n+1}-1}(T_{n}) \}$ has intersection multiplicity bounded by $3$, we have $\sum_{i=0}^{j_2-j_1-1}|f^{i}(f^{j_1+1}(T_n))| < 3$. Moreover, by Lemma \ref{lemaspaceapp}, the interval$$f^{j_2-j_1-1}(f^{j_1+1}(T_n))$$contains a $\tau-$scaled neighbourhood of $f^{j_2-j_1-1}(f^{j_1+1}(I_n))$. Therefore, by Koebe's Distortion Principle (see Section \ref{secprel}) there exists a constant $K_0=K_0(f)>1$ such that for all $x,y \in f^{j_1+1}(I_n)$ we have that
\begin{equation*}
 \dfrac{1}{K_0} \leq \dfrac{Df^{j_2-j_1-1}(x)}{Df^{j_2-j_1-1}(y)} \leq K_0\,.
\end{equation*}
Let $y\in I_n^{j_1+1}$ be given by the Mean Value Theorem:$$Df^{j_2-j_1-1}(y)=\dfrac{|f^{j_2}(I_n)|}{|f^{j_1+1}(I_n)|}\,.$$Then for all $x \in f^{j_1+1}(I_n)$ we have
\begin{equation*}
 \dfrac{1}{K_0}\,\dfrac{|f^{j_2}(I_n)|}{|f^{j_1+1}(I_n)|} \leq Df^{j_2-j_1-1}(x) \leq K_0\,\dfrac{|f^{j_2}(I_{n})|}{|f^{j_1+1}(I_n)|}\,. 
\end{equation*}
\end{proof}

Now recall that, by the non-flatness condition (see for instance \cite[Lemma 2.2]{EdFG18}), for each critical point $c_{i}$ of criticality $d_i>1$ there exists a neighbourhood $U_{i} \subseteq S^{1}$ of $c_i$ such that for any given interval $J \subseteq U_i$ and $x \in J$ we have$$Df(x) \leq 3d_i\,\dfrac{\big|f(J)\big|}{|J|}\,.$$With this at hand, the first estimate in Proposition \ref{lemmaC1bounds} follows from Lemma \ref{regbranches} and the help of the 
chain rule: $$Df^{j}(x) \leq (3d)^{3N}\,K_0^{3N}\,\dfrac{|f^{j}(I_{n})|}{|I_{n}|}\quad\mbox{for any $x \in I_{n}$ and 
$j \in \{1, \cdots, q_{n+1} \}$\,,}$$where $N=\car\big(\crit(f)\big)$ is the number of critical points of $f$, $d$ is the maximum of its criticalities and $K_0=K_0(f)$ is given by Lemma \ref{regbranches}. We finish the proof of Proposition \ref{lemmaC1bounds} by proving that the sequence $\big\{f^{q_{n+1}}|I_{n}\big\}$ is bounded in the $C^1$ metric: by combinatorics, $I_{n+1} \subset f^{q_{n+1}}(I_n) \subset I_n \cup I_{n+1}$. Then:$$\frac{|I_{n+1}|}{|I_{n}|}\leq\frac{\big|f^{q_{n+1}}(I_n)\big|}{|I_{n}|}\leq 1+\frac{|I_{n+1}|}{|I_{n}|}\,.$$By Theorem \ref{realboundsBT}, we have $|I_{n+1}|\asymp|I_n|$, and then $\big|f^{q_{n+1}}(I_{n})\big|\asymp|I_{n}|$.
\end{proof}

With Lemma \ref{intersectcomp} and Proposition \ref{lemmaC1bounds} at hand, we are ready to prove our main result in this appendix, namely Proposition \ref{lemmanegsch} (and remember that Proposition \ref{lemmanegsch} immediately implies Proposition \ref{propnegschw}).

\begin{proof}[Proof of Proposition \ref{lemmanegsch}] Let us fix $x_0 \in S^1$ and $n\in\nt$. We give the proof only for the case $x \in I_n(x_0)$ regular point of $f^{q_{n+1}}$ (the other case being entirely analogous). Let $j\in\{0,...,q_{n+1}-1\}$ be the minimum positive integer such that$$f^{j}\big(I_{n}(x_0)\big) \cap J_{n}(c_i)\neq \textrm{\O}$$for some $i\in\{0,...,N-1\}$. Without loss of generality, we may assume that $i=0$. By Lemma \ref{intersectcomp}, $f^{j}(I_{n}(x_0))$ and $J_{n}(c_0)$ have comparable lengths: there exists $C_0>1$, depending only on $f$, such that$$\big|f^{j}(x)-c_0\big| \leq C_0\,\big|f^{j}(I_{n}(x_0))\big|\quad\mbox{for all $x \in I_{n}(x_0)$.}$$
Moreover, by Koebe distortion principle there exists $C_1>1$ (also depending only on $f$) such that $f^j|_{I_{n}(x_0)}$ has distortion bounded by $C_1$, that is:$$\frac{1}{C_1}\leq\frac{Df^j(x)}{Df^j(y)}\leq C_1\quad\mbox{for all $x,y \in I_{n}(x_0)$.}$$Recall that, by the non-flatness condition (see for instance \cite[Lemma 2.2]{EdFG18}), for each critical point $c_{i}$ there exist a neighbourhood 
$U_{i} \subseteq S^{1}$ of $c_i$ and a positive
constant $K_i$ such that for all $x \in U_{i} \setminus \{c_{i} \}$ we have
\begin{equation}\label{negS0}
 Sf(x)< -\,\dfrac{K_{i}}{(x-c_{i})^{2}} <0\,.
\end{equation}
Let $\mathcal{U}=\bigcup_{i=0}^{N-1} U_i$, and let $\mathcal{V}\subset S^1$ be an open set whose closure contains no critical point of $f$ and such that $\mathcal{U}\cup \mathcal{V}=S^1$. Since $f$ is of class $C^3$, we know that $M=\sup_{y \in \mathcal{V}}\big|Sf(y)\big|$ is finite. Let $\delta_n=\max_{x_0 \in S^1}\max_{0\leq k< q_{n+1}}\big|f^k(I_{n}(x_0))\big|$.  Since $f$ is minimal, $\delta_n\to 0$ as $n\to \infty$. 
We choose $n_0=n_0(f)$ so large that $\delta_n$ is smaller than the Lebesgue number of the covering $\{\mathcal{U}, \mathcal{V}\}$ of the circle for all $n\geq n_0$. Moreover, we also require that $\delta_n<K_0/M\,K^2\,C_0^2\,C_1^2$ for all $n\geq n_0$, where $K=K(f)>1$ is given by Proposition \ref{lemmaC1bounds}. Using the chain rule for the Schwarzian derivative, we have for all $\ell\in \{j+1,...,q_{n+1}\}$ and for all $x \in I_{n}(x_0)$ regular point of $f^{\ell}$ the following identity:
$$Sf^{\ell}(x)\;=\; \sum_{k=0}^{\ell-1} Sf(f^k(x))\left[Df^k(x)\right]^2.$$
We decompose this expression as $\Sigma_1^{(n)}(x) + \Sigma_2^{(n)}(x)$, where
 \begin{equation}\label{S1}
 \Sigma_1^{(n)}(x)\;=\; \sum_{k:f^k(I_{n}(x_0)) \subset \mathcal{U}} Sf(f^k (x))\left[Df^k (x)\right]^2 \ ,
\end{equation}
and $\Sigma_2^{(n)}(x)$ is the sum over the remaining terms, and we treat both cases separately.
\begin{enumerate}
 \item[(i)] Since $f^{j}(I_{n}(x_0)) \cap J_n(c_0)\neq\text{\O}$, we have $f^{j}(I_{n}(x_0))\subset\mathcal{U}$ and then the sum in the right-hand side of \eqref{S1} includes the term with $k=j$, namely $Sf\big(f^j(x)\big)\left[Df^j(x)\right]^2$. Since all the other terms in \eqref{S1} are negative as well, and since $\big|f^{j}(x)-c_0\big| \leq C_0\,\big|f^{j}(I_{n}(x_0))\big|$, we deduce from \eqref{negS0} that:$$\Sigma_1^{(n)}(x)\;<\; -\,\frac{K_0}{C_0^2\,\big|f^{j}(I_{n}(x_0))\big|^2}\,\left[Df^j(x)\right]^2\,.$$
Let $y \in I_{n}(x_0)$ be such that $\big|f^{j}(I_{n}(x_0))\big|=Df^j(y)\,|I_{n}(x_0)|$. By bounded distortion, we obtain:
\begin{equation}\label{S1final}
\Sigma_1^{(n)}(x)\;<\; -\,\frac{K_0}{C_0^2}\,\frac{1}{|I_{n}(x_0)|^2}\left[\frac{Df^j(x)}{Df^j(y)}\right]^2\;<\; -\,\frac{K_0}{C_0^2\,C_1^2}\,\frac{1}{|I_{n}(x_0)|^2}\,.
\end{equation}
\item[(ii)] Observe that
$$\left|\Sigma_2^{(n)}(x)\right|\;\leq\; \sum_{k:f^k(I_{n}(x_0))\subset \mathcal{V}}\big|Sf(f^k(x))\big|\left[Df^k(x)\right]^2.$$
By Proposition \ref{lemmaC1bounds}, there exists $K>1$ such that
\begin{equation}\label{S2final}
\begin{aligned}
 \left|\Sigma_2^{(n)}(x)\right| &\leq \sum_{k:f^k(I_{n}(x_0))\subset \mathcal{V}}\big|Sf(f^k(x))\big|\,K^{2}\, \dfrac{|f^k(I_{n}(x_0))|^2}{|I_{n}(x_0)|^2} \\
 &\leq M\,\dfrac{K^2}{|I_{n}(x_0)|^2}\sum_{k:f^k(I_{n}(x_0))\subset \mathcal{V}}\big|f^k(I_{n}(x_0))\big|^2 \\
 &\leq  M\,\dfrac{K^2}{|I_{n}(x_0)|^2}\,\max_{0 \leq k \leq \ell-1}\big|f^k(I_{n}(x_0))\big|\sum_{k:f^k(I_{n}(x_0))\subset \mathcal{V}}\big|f^k(I_{n}(x_0))\big|\\
&\leq  M\,\dfrac{K^2}{|I_{n}(x_0)|^2}\,\delta_{n}.
\end{aligned}
\end{equation}
\end{enumerate}
By our choice of $n_0$, we know that $\delta_n<K_0/M\,K^2\,C_0^2\,C_1^2$ for all $n\geq n_0$, and then we deduce from \eqref{S1final} and \eqref{S2final} that, indeed, $Sf^{\ell}(x)<0$ for all $\ell \in \{j+1, \cdots,q_{n+1}\}$ and all $x \in I_n(x_0)$ regular point of $f^{\ell}$.
\end{proof}

\end{document}